\crefname{section}{Section}{Sections}
\crefname{subsection}{Subsection}{Subsections}
\theoremstyle{plain}
\newtheorem{lemma}{Lemma}[section]
\newtheorem{proposition}[lemma]{Proposition}
\newtheorem{corollary}[lemma]{Corollary}
\newtheorem{theorem}[lemma]{Theorem}
\theoremstyle{nonumberplain}
\newtheorem{thbis}{\Cref{th.soc_filt} bis}
\newtheorem{corbis}{\Cref{cor.self-dual} bis}
\theoremstyle{plain}
\newtheorem{definition}[lemma]{Definition}
\newtheorem{notation}[lemma]{Notation}
\newtheorem{example}[lemma]{Example}
\newtheorem{remark}[lemma]{Remark}
\crefname{definition}{definition}{definitions}
\crefname{ex}{example}{examples}
\crefname{remark}{remark}{remarks}
\crefname{convention}{convention}{conventions}
\crefname{lemma}{lemma}{lemmas}
\crefname{proposition}{proposition}{propositions}
\crefname{corollary}{corollary}{corollaries}
\crefname{theorem}{theorem}{theorems}
\crefname{assumption}{assumption}{Assumptions}
\crefname{equation}{}{}
\theoremstyle{nonumberplain}
\newtheorem{proof}{Proof}
\newtheorem{proof_of_3d_derived_ff}{Proof of \Cref{th.3d_derived_ff}}
\newtheorem{proof_of_univ}{Proof of \Cref{th.univ}}
\newtheorem{proof_of_simples}{Proof of \Cref{pr.simples}}
\newtheorem{proof_of_Kosz}{Proof of \Cref{th.Kosz}}
\newtheorem{proof_of_3d_univ}{Proof of \Cref{th.3d_univ}}
\newtheorem{proof_of_inj_env}{Proof of Proposition~\ref{pr.inj_env}}
\newcommand\bC{{\mathbb C}}
\newcommand\bK{{\mathbb K}}
\newcommand\bN{{\mathbb N}}
\newcommand\bT{{\mathbb T}}
\newcommand\bZ{{\mathbb Z}}
\newcommand\cA{{\mathcal A}}
\newcommand\cC{{\mathcal C}}
\newcommand\cD{{\mathcal D}}
\newcommand\cM{{\mathcal M}}
\newcommand\cS{{\mathcal S}}
\newcommand\fg{\mathfrak{g}}
\newcommand\fG{\mathfrak{G}}
\newcommand\fgl{\mathfrak{gl}}
\newcommand\fsl{\mathfrak{sl}}
\newcommand\fo{\mathfrak{o}}
\newcommand\fsp{\mathfrak{sp}}
\newcommand\ol{\overline}
\DeclareMathOperator{\id}{id}
\DeclareMathOperator{\End}{\mathrm{End}}
\DeclareMathOperator{\Hom}{\mathrm{Hom}}
\DeclareMathOperator{\Ext}{\mathrm{Ext}}
\DeclareMathOperator{\soc}{\mathrm{soc}}
\DeclareMathOperator{\usoc}{\underline{\mathrm{soc}}}
\DeclareMathOperator{\fin}{\cat{fin}}
\DeclareMathOperator{\sym}{\cat{Sym}}
\newcommand{\define}[1]{{\em #1}}
\newcommand{\cat}[1]{\textsc{#1}}
\newcommand{\qedhere}{\mbox{}\hfill\ensuremath{\blacksquare}}
\title{Ordered tensor categories and representations of the Mackey Lie algebra of infinite matrices}
\author{Alexandru Chirvasitu,\quad   Ivan Penkov}
\begin{document}

\date{}

\newcommand{\Addresses}{{
  \bigskip
  \footnotesize

  \textsc{Department of Mathematics, University
    of Washington, Seattle, WA 98195-4350, USA}\par\nopagebreak
  \textit{E-mail address}: \texttt{chirva@uw.edu}

  \medskip

  \textsc{Jacobs University Bremen, Campus Ring 1, 28759 Bremen, Germany}\par\nopagebreak
  \textit{E-mail address}: \texttt{i.penkov@jacobs-university.de}
}}

\maketitle

\begin{center}
  {\it To Jean-Louis Koszul on his 95th birthday}
\end{center}

\begin{abstract}
We introduce (partially) ordered Grothendieck categories and apply results on their structure to the study of categories of representations of the Mackey Lie algebra of infinite matrices $\fgl^M\left(V,V_*\right)$.  Here $\fgl^M\left(V,V_*\right)$ is the Lie algebra of endomorphisms of a nondegenerate pairing of countably infinite-dimensional vector spaces $V_*\otimes V\to\bK$, where $\bK$ is the base field.  Tensor representations of $\fgl^M\left(V,V_*\right)$ are defined as arbitrary subquotients of finite
direct sums of tensor products $(V^*)^{\otimes m}\otimes
(V_*)^{\otimes n}\otimes V^{\otimes p}$ where $V^*$ denotes the algebraic dual of $V$. The category $\bT^3_{\fgl^M\left(V,V_*\right)}$
which they comprise, extends a category  $\bT_{\fgl^M\left(V,V_*\right)}$ previously studied in
\cite{DPS,PS2,SS}, and our main result is that $\bT^3_{\fgl^M\left(V,V_*\right)}$ is a
finite-length, Koszul self-dual, tensor category with a certain
universal property that makes it into a ``categorified algebra''
defined by means of a handful of generators and relations.  
This result uses essentially the general properties of ordered Grothendieck categories, which yield also simpler proofs of some facts about the category $\bT_{\fgl^M\left(V,V_*\right)}$ established in~\cite{PS2}.   Finally, we discuss the extension
of $\bT^3_{\fgl^M\left(V,V_*\right)}$ by the algebraic dual $(V_*)^*$ of $V_*$.

\end{abstract}

\noindent {\em Key words: Mackey Lie algebra, tensor category, Koszulity}

\vspace{.5cm}

\noindent{MSC 2010: 17B65, 17B10, 16T15}

\tableofcontents

\section*{Introduction}\label{se.intro}

Despite the fact that matrix Lie algebras are part of the core of classical Lie theory, Lie algebras of infinite matrices present many challenges concerning their structure and representations.  Our study is motivated by the
desire to understand certain natural categories of tensor representations of an infinite-dimensional matrix Lie algebra. These categories are analogues of the category of finite-dimensional representations of the Lie algebra $\fsl(n)$ (or more generally, of any simple finite-dimensional Lie algebra) but are not semisimple. 

A first example of such a category is $\bT_{\fg}$, the category of tensor modules over a finitary simple Lie algebra $\fg\cong \fsl(\infty)$, $\fo(\infty)$ or $\fsp(\infty)$. This category was introduced independently in \cite{DPS} and \cite{SS} and has been further studied in \cite{PS2} (see also \cite{PS}). Some of the essential attributes of the category $\bT_{\fg}$ are that its objects have finite length and that it is a nonrigid Koszul tensor category.  

A more elaborate example is the category $\widetilde{\mathrm{Tens}}_{\fg}$ for $\fg$ as above, introduced in \cite{PS1}. It contains both defining representations of $\fg$ as well as their algebraic duals, but is no longer a finite-length category. Notably the objects of $\widetilde{\mathrm{Tens}}_{\fg}$ have finite Loewy length. 

The paper \cite{PS2} introduces a class of infinite-dimensional, non-locally finite matrix Lie algebras called \define{Mackey Lie algebras}. A first example is the algebra $\fgl^M\left(V,V_*\right)$ of all endomorphisms of a countable-dimensional vector space $V$ which preserve a fixed nondegenerate pairing $V\otimes V_*\to \bK$ ($\bK$ is the base field, $\mathrm{char}\,\bK=0$), $V_*$ being also a countable-dimensional vector space.
In matrix terms, $\fgl^M\left(V,V_*\right)$ consists of infinite matrices each of whose rows and columns has at most finitely many nonzero entries.  This Lie algebra has two obvious modules: $V$ and $V_*$.  In addition, interestingly, the algebraic dual $V^*$ of $V$, although not simple over $\fgl^M\left(V,V_*\right)$, has finite length: the socle of $V^*$ as a $\fgl^M(V,V_*)$-module equals $V_*$, and the quotient $V^*/V_*$ is a simple $\fgl^M(V,V_*)$-module. Moreover, as shown by the first author in \cite{Chi14}, the tensor category $\bT^3_{\fgl^M\left(V,V_*\right)}$ generated by $V$, $V_*$ and $V^*$ is a finite-length category. 

With the categories $\bT_{\fg}$ and $\bT^3_{\fgl^M\left(V,V_*\right)}$ in mind, we introduce in this paper a general notion of ordered Grothendieck category. These are Grothendieck categories for which the indecomposable injectives are parametrized by a partially ordered set with finite downward sequences; the precise definition is given in \Cref{se.inj} below. Our main result regarding ordered Grothendieck categories is that they are equivalent to categories of comodules over semiperfect coalgebras, and that they are universal in a certain sense to be made precise below.  

Applied to the Grothendieck closure of $\bT_{\fg}$ for $\fg=\fsl(\infty)$, $\fo(\infty)$ or $\fsp(\infty)$, this result simplifies proofs of some main results in \cite{DPS}. It is also crucial in our study of the category $\bT^3_{\fgl^M\left(V,V_*\right)}$: we give an explicit parametrization of the simple objects of $\bT^3_{\fgl^M\left(V,V_*\right)}$ via triples of Young diagrams, characterize the indecomposable injectives explicitly, find the blocks of $\bT_{\fgl^M\left(V,V_*\right)}^3$, and, last but not least, prove the Koszulity of $\bT^3_{\fgl^M\left(V,V_*\right)}$. Moreover, we show that $\bT^3_{\fgl^M\left(V,V_*\right)}$ is Koszul self-dual.   We also apply our general universality result to show that the category $\bT^3_{\fgl^M\left(V,V_*\right)}$ is universal in a much stronger sense than a general ordered Grothendieck category.

Finally, we take a few first steps in studying the abelian tensor category $\bT^4_{\fgl^M}$ generated by $V$, $V_*$, $V^*$ and $(V_*)^*$. We prove that $\bT^4_{\fgl^M\left(V,V_*\right)}$ is a finite-length category and its simple objects are parametrized by quadruples of Young diagrams. However, the Grothendieck closure of $\bT^4_{\fgl\left(V,V_*\right)}$ is not an ordered Grothendieck category, and therefore we leave a more detailed study of $\bT^4_{\fgl\left(V,V_*\right)}$
(e.g. injectives in its Grothendieck closure) to the future. 

The formalism of ordered tensor categories, as introduced in this paper, will very likely be applicable to categories of tensor representations of diagonal infinite-dimensional Lie algebras such as $\fgl(2^\infty)$. 

In addition to the intrinsic interest of the formalism of ordered tensor categories, we believe the investigations conducted in this paper fit into an algebraic framework for studying topics that are typically the domain of functional analysis. Indeed, $V_*$ can be regarded as a topological dual to $V$ for a certain topology on the latter (the Mackey topology determined by the pairing $V\otimes V_*\to \bK$).  In this sense, we are examining the interaction between duals associated to different topologies (i.e. $V_*$ and $V^*$); this is one of the main themes in the study of locally convex topological vector spaces.

\subsection*{Acknowledgements}

We thank Vera Serganova for sharp comments on the topic of this paper. I. P. acknowledges continued partial support by the DFG through the  Priority Program ``Representation Theory'' and through grant PE 980/6-1.

\section{Background}\label{se.prel}

All algebras, Lie algebras, coalgebras, etc. are over a field $\bK$ fixed throughout.  Except in \Cref{se.inj}, the characteristic of $\bK$ is assumed to equal $0$.  The superscript $^*$ indicates dual space, i.e., $V^*=\Hom_\bK(V,\bK)$ for a vector space $V$.  The sign $\otimes$ means $\otimes_\bK$, except in \Cref{se.inj} where it denotes the tensor product in an abstract tensor category.

Let $\mathrm{p}:V\otimes V_*\to\bK$ be a fixed nondegenerate pairing ($\bK$-bilinear form) of countably infinite-dimensional vector spaces $V,V_*$ over $\bK$.   G. Mackey studied in his dissertation~\cite{X} arbitrary nondegenerate pairings of infinite-dimensional spaces, and proved that if both $V$ and $V_*$ are countable dimensional, such a pairing is unique up to isomorphism.  In particular, there is a basis $\left\{v_\alpha\right\}$ of $V$ such that $V_*=\mathrm{span}\left\{v_\alpha^*\right\}$, where $v_\alpha^*$ is the dual system of vectors defined by $\mathrm{p}$: $\mathrm{p}\left(v_\alpha\otimes v_\beta^*\right)=\delta_{\alpha\beta}$. 

As pointed out in the introduction, the Mackey Lie algebra $\fgl^M\left(V,V_*\right)$ associated with the pairing $\mathrm{p}$ is the Lie algebra of endomorphisms of $\mathrm{p}$, i.e., the Lie subalgebra of $\End_\bK(V)$ 
$$\fgl^M\left(V,V_*\right)=\left\{\varphi\in \End_\bK(V)\,\big|\,\varphi^*\left(V_*\right)\subseteq V_*\right\}\,,$$
where $\varphi^*:V^*\to V^*$ is the endomorphism dual to $\varphi$.  In a basis $\left\{v_\alpha\right\}$ as above, $\fgl^M\left(V,V_*\right)$ consists of infinite matrices each of whose rows and columns has at most finitely many nonzero entries.  More general Mackey Lie algebras have been introduced in {\cite{PS2}}.

Next, $\bT_{\fgl^M\left(V,V_*\right)}$ is the full tensor subcategory of $\fgl^M\left(V,V_*\right)\text{-}\mathrm{Mod}$ whose objects are arbitrary subquotients of finite direct sums of $\fgl^M$-modules of the form $ \left( V_*\right)^{\otimes m}\otimes V^{\otimes n}$.  A main result of {\cite{PS2}} claims that the category $\bT_{\fgl^M\left(V,V_*\right)}$ is naturally equivalent to the similarly defined category $\bT_{\fsl\left(V,V_*\right)}$, where $\fsl\left(V,V_*\right):=\ker \mathrm{p}$ as a  Lie algebra.  

The category $\bT_{\fsl\left(V,V_*\right)}$ has been introduced and studied in \cite{DPS} (and independently also in \cite{SS}).  The main features of $\bT_{\fsl\left(V,V_*\right)}$ are that it is a finite-length Koszul self-dual category.  More precisely, it is shown in \cite{DPS} that $\bT_{\fsl\left(V,V_*\right)}$ is antiequivalent to a category of finite-dimensional modules over an associative algebra $A_{\fsl\left(V,V_*\right)}$ which is an inductive limit of finite-dimensional Koszul self-dual algebras.  Indecomposable injectives in $\bT_{\fsl\left(V,V_*\right)}$, i.e., injective hulls of simple modules, turn out to be precisely arbitrary indecomposabile direct summands of the modules $\left( V_*\right)^{\otimes m}\otimes V^{\otimes n}$.

Consider now the Lie algebra $\End_\bK(V)$ for an arbitrary vector space $V$.  This Lie algebra also has a natural tensor category of representations $\bT_{\End_\bK(V)}$ consisting of all subquotients of finite direct sums of the form $\left(V^*\right)^{\otimes m}\otimes V^{\otimes n} $, and as proved in {\cite{PS2}}, the tensor category $\bT_{\End_\bK(V)}$ is naturally equivalent to $\bT_{\fsl\left(V,V_*\right)}$.  The equivalence functor $\bT_{\End_\bK(V)}\rightsquigarrow \bT_{\fsl\left(V,V_*\right)}$ maps $V$ to $V$ and $V^*$ to $V_*$.  However, since $\fgl^M\left(V,V_*\right)$ is a Lie subalgebra of $\End_\bK(V)$, it is interesting to ask whether the tensor category ``generated'' by the restrictions of modules in $\bT_{\End_\bK(V)}$ to $\fgl^M\left(V,V_*\right)$ has good properties.

Studying the latter tensor category is a main topic in the present paper.  The precise definition of the category is as follows: $\bT^3_{\fgl^M\left(V,V_*\right)}$ is the full tensor subcategory of $\fgl^M\left(V,V_*\right)\text{-}\mathrm{Mod}$ whose objects are $\fgl^M\left(V,V_*\right)$-subquotients of finite direct sums of the form $\left(V^*\right)^{\otimes m}\otimes V^{\otimes n}$.  Some first results about the category $\bT^3_{\fgl^M\left(V,V_*\right)}$ have been established in {\cite{Chi14}} by the first author; in particular, it is shown in {\cite{Chi14}} that $\bT^3_{\fgl^M\left(V,V_*\right)}$ is a finite-length tensor category.

In what follows, unless the contrary is stated explicitly, we consider $V$, $V_*$, $V^*$, and $\fgl^M\left(V,V_*\right)$ fixed; we write $\fgl^M$ instead of $\fgl^M\left(V,V_*\right)$ for brevity.

In order to state some further results of {\cite{Chi14}}, let us recall that, given any vector space $Z$ and a Young diagram $\lambda$ (or, equivalently, a partition $\lambda=\left(\lambda_1\geq \lambda_2\geq \ldots\geq \lambda_k>0\right)$), any filling of $\lambda$ turning $\lambda$ into a Young tableau $\bar{\lambda}$ defines a subspace $Z_\lambda \subset Z^{\otimes|\lambda|}$, for $|\lambda|=\sum_{j=1}^k\,\lambda_j$.  
This subspace is the image of the Schur projection $Z^{\otimes|\lambda|}\to Z^{\otimes|\lambda|}$ determined by $\bar{\lambda}$.  If $Z$ is a representation of a Lie algebra, then any $Z_\lambda$ is also a representation of the same Lie algebra, and $Z_\lambda$ depends up to isomorphism only on $\lambda$ and not on $\bar{\lambda}$.

Given two Young diagrams $\mu$, $\nu$, it is proved in \cite{DPS} that the $\fgl^M$-module $\left(V_*\right)_\mu\otimes V_\nu$ is indecomposable (and injective in $\bT_{\fgl^M)}$) and has simple socle.  We denote this socle by $V_{\mu,\nu}$.  Moreover, the modules $V_{\mu,\nu}$ are pairwise nonisomorphic (for distinct pairs of Young diagrams) and exhaust  (up to isomorphism) all simple objects of $\bT_{\fgl^M}$.

In {\cite{Chi14}} the simplicity of $V^*/V_*$ as a $\fgl^M$-module has been shown.  This yields three obvious simple modules in $\bT^3_{\fgl^M}$: $V$, $V_*$, and $V^*/V_*$.  A next important result of {\cite{Chi14}} is

\begin{theorem}[{{\cite{Chi14}}}]  For a triple of Young diagrams $\lambda,\mu,\nu$, the $\fgl^M$-module 
$$V_{\lambda,\mu,\nu}:=\left(V^*/V_*\right)_\lambda\otimes V_{\mu,\nu}$$
is simple.
\end{theorem}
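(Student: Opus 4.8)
The plan is to reduce the statement to two facts: that $W_\lambda:=(V^*/V_*)_\lambda$ is a simple $\fgl^M$-module for every Young diagram $\lambda$ --- the case $\mu=\nu=\emptyset$ --- and that $W_\lambda\otimes V_{\mu,\nu}$ stays simple when tensored with the simple module $V_{\mu,\nu}$ recalled above.

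For the first fact I would use Schur--Weyl duality, via the observation that $V^*/V_*$ is not merely simple (by \cite{Chi14}) but \emph{absolutely} simple over $\fgl^M$. Fix a basis $\{v_\alpha\}$ of $V$ adapted to $\mathrm p$, so that $V^*=\prod_\alpha\bK v_\alpha^*$ and $V_*=\bigoplus_\alpha\bK v_\alpha^*$; an elementary manipulation with row- and column-finite matrices shows that every $\bK$-linear map from a finite-dimensional subspace of $V^*/V_*$ into $V^*/V_*$ is realized by an element of $\fgl^M$. Such strong density forces $\End_{\fgl^M}(V^*/V_*)=\bK$ and, since $V^*/V_*$ is infinite-dimensional, identifies the commutant of the diagonal $\fgl^M$-action on $(V^*/V_*)^{\otimes n}$ with the image of $\bK[S_n]$. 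The usual Schur--Weyl conclusion then gives $(V^*/V_*)^{\otimes n}=\bigoplus_{|\lambda|=n}W_\lambda^{\oplus m_\lambda}$ with $m_\lambda>0$, each $W_\lambda$ simple and $\End_{\fgl^M}(W_\lambda)=\bK$.

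The second fact is the heart of the matter, and the plan is to locate $V_{\lambda,\mu,\nu}=W_\lambda\otimes V_{\mu,\nu}$ inside a module of $\bT^3_{\fgl^M}$ whose structure we can control. Since $V_{\mu,\nu}$ is the socle of the injective $I_{\mu,\nu}=(V_*)_\mu\otimes V_\nu$ of $\bT_{\fgl^M}$, we have $W_\lambda\otimes V_{\mu,\nu}\hookrightarrow W_\lambda\otimes I_{\mu,\nu}$; and, Schur functors preserving surjections, the short exact sequence $0\to V_*\to V^*\to V^*/V_*\to 0$ yields a surjection $(V^*)_\lambda\twoheadrightarrow W_\lambda$, exhibiting $W_\lambda\otimes I_{\mu,\nu}$ as a quotient of $(V^*)_\lambda\otimes(V_*)_\mu\otimes V_\nu\in\bT^3_{\fgl^M}$. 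The same short exact sequence equips $(V^*)_\lambda$ with a Littlewood--Richardson filtration whose subquotients are the modules $(V_*)_\rho\otimes W_\sigma$, with $(V_*)_\lambda$ at the bottom and $W_\lambda$ on top; combined with the known composition-factor and Loewy data for $(V_*)_\rho\otimes(V_*)_\mu\otimes V_\nu$ in $\bT_{\fgl^M}\simeq\bT_{\fsl(\infty)}$, this bounds the composition factors of $W_\lambda\otimes I_{\mu,\nu}$, and hence of its submodule $W_\lambda\otimes V_{\mu,\nu}$. The aim is to show that $W_\lambda\otimes V_{\mu,\nu}$ has simple socle $S$, that every composition factor of it is isomorphic to $S$, and that $[\,W_\lambda\otimes V_{\mu,\nu}:S\,]=1$, which forces it to be simple; the auxiliary input $\End_{\fgl^M}(W_\lambda\otimes V_{\mu,\nu})=\bK$ would come from the fact that the $\fgl^M$-module morphisms among the modules $V^{\otimes a}\otimes(V_*)^{\otimes b}\otimes(V^*/V_*)^{\otimes c}$ are composites of permutations of like factors and contractions $V\otimes V_*\to\bK$ arising from $\mathrm p$, so that the endomorphisms of a Schur component among these are only scalars.

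The step I expect to be the genuine obstacle is precisely this control of socles and composition factors of tensor products: socle and Loewy length behave badly under $\otimes$, so excluding stray composition factors and pinning down the multiplicity-one statement is delicate. I anticipate the cleanest path is the one followed in \cite{Chi14}: prove directly that any nonzero vector of $W_\lambda\otimes V_{\mu,\nu}$ generates the module over $U(\fgl^M)$, exploiting the abundance of finite-rank and, more generally, row- and column-finite operators in $\fgl^M$ together with the simplicity of each tensor factor. A more structural alternative would be to show that $(V^*/V_*)\otimes M$ is semisimple for every simple $M\in\bT^3_{\fgl^M}$ and induct on $|\lambda|$, concluding that $W_\lambda\otimes V_{\mu,\nu}$ is semisimple and hence --- being indecomposable with scalar endomorphism ring --- simple, at the cost of transferring the difficulty to that semisimplicity claim.
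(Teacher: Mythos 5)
Your first step contains a genuine logical gap. From the density statement you correctly anticipate (which is essentially \Cref{le.simples}) one can indeed deduce $\End_{\fgl^M}\bigl((V^*/V_*)^{\otimes n}\bigr)=\bK[S_n]$, but the ``usual Schur--Weyl conclusion'' does not follow: knowing that the commutant is $\bK[S_n]$ only tells you that each Schur component $(V^*/V_*)_\lambda$ is indecomposable with scalar endomorphisms and that distinct components admit no nonzero homomorphisms between them; without semisimplicity of the tensor power (which is exactly what is at stake) this does not yield simplicity. The category at hand is full of modules with endomorphism ring $\bK$ that are far from simple --- $V^*$ itself, with socle $V_*$, is the basic example --- so the double-commutant shortcut is not available. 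The actual argument of \cite{Chi14} (reproduced in this paper for the four-diagram analogue, \Cref{pr.simples}) is the direct one you only gesture at in your last paragraph: given a nonzero vector, write it as a sum of decomposable tensors concentrated in disjoint infinite index sets, use a block-diagonal copy of $\fg^M\oplus\fg^M$ together with the density \Cref{le.simples} to show it generates everything. That argument is not an optional ``cleanest path''; it is the proof, and your proposal does not carry it out.

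The second step is also left unproved, and here you are missing the key structural lemma that makes the reduction trivial rather than delicate. The point is not to control socles or Littlewood--Richardson filtrations of $W_\lambda\otimes I_{\mu,\nu}$ (as you note, tensor products behave badly for this, and your multiplicity-one and simple-socle claims are exactly what would need proof). Instead, one uses that $\fsl(V,V_*)$ is an \emph{ideal} of $\fgl^M$ which acts trivially on $(V^*/V_*)_\lambda$ and densely and irreducibly on $V_{\mu,\nu}$ with $\End_{\fsl(V,V_*)}(V_{\mu,\nu})=\bK$; then \cite[Lemma 3]{Chi14} (the simplicity counterpart of \Cref{le.hom_split}, and the same lemma invoked to pass from \Cref{pr.simples} to \Cref{cor.4diag_aux}) says precisely that the tensor product of a simple module over the quotient Lie algebra with such a module is simple over $\fgl^M$. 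That single lemma replaces your entire composition-factor bookkeeping, and its absence is the second gap: as written, your proposal reduces the theorem to two statements, neither of which it establishes.
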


In addition, the socle filtrations over $\fgl^M$ of the simple objects in $\bT^2_{\End_\bK(V)}$ are computed in {\cite{Chi14}}.  We will recall the result in \Cref{subse.inj_soc}.

We conclude this background section by a diagram of categories which helps to better understand our object of study.  Let $\bT^1_{\fgl^M}$ denote the finite-length tensor category with single generator $V$: its objects are finite direct sums of simples of the form $V_\lambda$ for arbitrary Young diagrams $\lambda$.  By $\bT^2_{\fgl^M}$ we denote the category $\bT_{\fgl^M}$, and by $\bT^2_{\End_\bK(V)}$ we denote the category $\bT_{\End_\bK(V)}$ {\cite{PS2}}.  We then have the following natural diagram of inclusions and a restriction functor
\begin{equation}\label{eq.diagram}
	\begin{tikzpicture}
  \matrix (m) [matrix of math nodes,row sep=3em,column sep=2em,minimum width=2em] {
    \bT^1_{\fgl^M} &  \bT^2_{\fgl^M}	&  \bT^3_{\fgl^M} &  \bT^4_{\fgl^M} \\
   & & \bT^2_{\End_\bK(V)}\,,& \\
  };
 \path[-stealth, auto] (m-1-1) edge[draw=none]
                                    node [sloped, auto=false,
                                     allow upside down] {$\subset$}                 (m-1-2)
                               (m-1-2)  edge[draw=none]
                                    node [sloped, auto=false,
                                     allow upside down] {$\subset$}            (m-1-3)
                              (m-1-3) edge[draw=none]
                                    node [sloped, auto=false,
                                     allow upside down] {$\subset$}      (m-1-4)
                       (m-2-3) edge[decorate,decoration={snake,amplitude=.4mm,segment length=2mm,post length=1mm}] node {$r$}	(m-1-3);
\end{tikzpicture}
\end{equation}
%
%
%
%
%
where the category $\bT^4_{\fgl^M}$ is defined by adjoining the $\fgl^M$-module $\left(V_*\right)^*$ to $\bT^3_{\fgl^M}$, see \Cref{sec.four}.  All categories \Cref{eq.diagram} are finite-length tensor categories.  The structure of the categories $\bT^1_{\fgl^M}$, $\bT^2_{\fgl^M}$, and $\bT^2_{\End_\bK(V)}$ is well understood, in particular, the latter two categories are canonically equivalent to the category $\bT_{\fsl\left(V,V_*\right)}$ studied in~\cite{DPS,SS}.  In the present paper we investigate mainly the category $\bT^3_{\fgl^M}$.  
For $\bT^4_{\fgl^M}$, we only establish preparatory results: we prove that $\bT^4_{\fgl^M}$ is a finite-length category and we classify its simple objects.

\section{Ordered Grothendieck categories
}\label{se.inj}

\subsection{Definition and characterization of indecomposable injectives}
\label{subse.indinj}


We now define a class of Grothendieck categories which we study throughout \Cref{se.inj}.

Let $\mathcal{C}$ be a $\bK$-linear Grothendieck category (for the notion of Grothendieck category we refer the reader to \cite{groth}). Unless specified otherwise, all additive categories are assumed $\bK$-linear and all functors between $\bK$-linear categories are understood to be $\bK$-linear.  The sign $\subset$ denotes a monomorphism in $\mathcal{C}$, or a not necessarily strict set-theoretic inclusion.  If $Z$ is an object $\mathcal{C}$, then $Z^{\oplus q}$ stands for the direct sum of $q$ copies of $Z$.  

Let $X_i\in\mathcal C$, $i\in I$ be objects indexed by a partially ordered set $(I,\leq)$ with the property that every element dominates finitely many others; in other words, for every $i\in I$ the down-set
\begin{equation*}
  I_{\le i} := \{j\in I\ |\ j\le i\}
\end{equation*}
is finite. For each $i\in I$, we fix a finite set $\Theta_i$ of morphisms from $X_i$ into various objects $X_j$ for $j<i$.  Denote \[ Y_i:=\bigcap_{f\in\Theta_i}\mathrm{ker}\,f. \] We further assume that 

\renewcommand{\labelenumi}{(\arabic{enumi})}
\begin{enumerate}
\item every object of $\mathcal C$ is a sum of subquotients of finite direct sums of $X_i$;
\item $Y_i$ has finite length, and $Y_i=\soc X_i$; write $\displaystyle Y_i=\bigoplus_{S\in \cS_i}S^{\oplus p(S)}$, where $\cS_i$ is the set of isomorphism classes of distinct simple direct summands of $Y_i$, and $p(S)$ is the multiplicity of $S$ in $Y_i$;
\item the sets $\cS_i$ are disjoint;
\item $X_i$ decomposes as $\displaystyle \bigoplus_{S\in \cS_i}\widetilde S^{\oplus p(S)}$ for modules $\widetilde S$ with socle $S$. 
\end{enumerate}
In the sequel we will sometimes refer to a simple object as belonging to $\cS_i$; by this we mean that its isomorphism class belongs to that set.

\begin{definition}
  An \define{ordered Grothendieck category} is a category $\cC$ satisfying all above assumptions. \end{definition}

\begin{remark}
  Conditions (1) through (4) ensure that all $X_i$ have finite length, and hence so do subquotients of finite direct sums of $X_i$. In particular, we can freely use the Jordan-H\"older Theorem for such objects.  
\end{remark}

The definition of ordered Grothendieck category allows the following quick characterization of indecomposable injectives.

\begin{proposition}\label{pr.inj_env}
  If $\cC$ is an ordered Grothendieck category then $\widetilde S$ is an injective hull of $S$ in $\mathcal C$ for any $i\in I$ and any simple $S\in \cS_i$. 
\end{proposition}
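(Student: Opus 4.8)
The plan is to deduce the statement from the injectivity of $\widetilde S$, which in turn I would obtain by a Noetherian induction along $(I,\le)$. By the Remark following the definition of an ordered Grothendieck category, $\widetilde S$ is a direct summand of the finite-length object $X_i$, hence is itself of finite length, and by assumption (4) its socle is the simple object $S$. A finite-length object with simple socle is an essential extension of that socle, so $S\subset\widetilde S$ is already essential, and it remains to prove that $\widetilde S$ is injective. The first observation is that $\cC$ is \emph{locally finite}: by (1) together with the Remark, every object of $\cC$ is the filtered union of its finite-length subobjects, and in particular every nonzero object has a simple subobject. In such a category a routine Zorn's lemma (Baer-type) argument shows that an object $E$ is injective provided $\Ext^1(T,E)=0$ for every simple object $T$; equivalently, $E$ is injective as soon as it has no \emph{proper} essential extension, and for this it suffices to rule out essential extensions $E\subsetneq M$ with $M/E$ simple. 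Thus the goal reduces to: for $S\in\cS_i$, there is no essential extension $\widetilde S\subsetneq M$ with $M/\widetilde S$ simple.

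Since every down-set $I_{\le i}$ is finite, $(I,\le)$ admits Noetherian induction, and I would prove by such induction on $i$ that each $X_i$ is injective; as $\widetilde S$ is a summand of $X_i$, this finishes the proof. Fix $i$ and assume $X_j$ is injective for every $j<i$; then the direct sum $Z$ of the (finitely many) targets of the morphisms in $\Theta_i$ is a finite direct sum of injectives, hence injective. By definition $\Theta_i$ yields an exact sequence $0\to Y_i\to X_i\xrightarrow{\ \beta\ }Z$ with $Y_i=\soc X_i$ and $X_i/Y_i\hookrightarrow Z$; since (by an easy auxiliary induction) the composition factors of $X_j$ all lie in $\bigcup_{m\le j}\cS_m$, the composition factors of $Z$, and hence those of $X_i/Y_i$, lie in $\bigcup_{j<i}\cS_j$, which by (3) is disjoint from $\cS_i$. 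Consequently every composition factor of $X_i$ lying in $\cS_i$ already occurs in $\soc X_i$. Now suppose, for contradiction, that $\widetilde S\subsetneq M$ with $T:=M/\widetilde S$ simple and $\widetilde S$ essential in $M$. Then $\soc M=S$; restricting $\beta$ to the summand $\widetilde S$ gives a map $\widetilde S\to Z$ with kernel $\soc\widetilde S=S$, which extends (as $Z$ is injective) to $\gamma:M\to Z$. One checks $\ker\gamma\cap\widetilde S=S$ and $\soc\ker\gamma=S$, so $\ker\gamma/S$ embeds into $M/\widetilde S=T$; the confinement of $\cS_i$-composition factors together with disjointness should force $\ker\gamma=S$, i.e. $M/S$ embeds into the injective object $Z$, and from this embedding one concludes that $0\to\widetilde S\to M\to T\to 0$ splits, contradicting $\soc M=S$.

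The main obstacle is this last step — passing from the embedding $M/S\hookrightarrow Z$ to the splitting of $0\to\widetilde S\to M\to T\to 0$, equivalently proving $\Ext^1(T,\widetilde S)=0$ from the structural data alone. The cleanest way to organize it is probably to establish, by an inner induction on length, the auxiliary claim that \emph{every finite-length object $N$ of $\cC$ with simple socle $S$ embeds into $\widetilde S$}: then any finite-length essential extension $M$ of $\widetilde S$ would embed into $\widetilde S$, forcing $M=\widetilde S$ by a length count, and local finiteness upgrades this to the absence of any proper essential extension. Carrying out that inner induction is where the hypotheses really do their work: assumption (1) realizes $N$ as a subquotient of a finite direct sum of $X_j$'s; the finite down-set condition lets one invoke that the relevant $X_j$ with $j<i$ are already injective; and the disjointness of the families $\cS_j$ prevents a composition factor that has ``descended'' through $\Theta_i$ from ever reappearing as part of a socle. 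Assembling these into the required embedding is the technical heart of the argument.
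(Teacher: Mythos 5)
Your reduction steps are fine as far as they go: $\widetilde S$ is a finite-length essential extension of $S$, local finiteness of $\cC$ follows from condition (1), and injectivity of a finite-length object can indeed be tested by ruling out proper essential extensions with simple quotient. The problem is that the actual content of the proposition is exactly the step you leave open. Your sketch of the inductive step does not close: from an essential extension $\widetilde S\subsetneq M$ with simple top $T$ and an extension $\gamma:M\to Z$ of $\beta|_{\widetilde S}$, nothing forces $\ker\gamma=S$ --- one does get $\ker\gamma\cap\widetilde S=S$ and $\soc\ker\gamma=S$, but $\ker\gamma$ may perfectly well have length two with top $T$ (then $M=\ker\gamma+\widetilde S$ and $\gamma(M)\cong\widetilde S/S$, and the composition-factor bookkeeping yields no contradiction, since $T$ need not lie in $\cS_i$ and your confinement statement says nothing about it). Even granting $\ker\gamma=S$, the passage from $M/S\hookrightarrow Z$ to the splitting of $0\to\widetilde S\to M\to T\to 0$ is, as you yourself note, missing; and the auxiliary claim that every finite-length object with simple socle $S$ embeds into $\widetilde S$ is not an ingredient you may take for granted --- it \emph{is} the statement to be proved (it is essentially the paper's \Cref{le.part}), and you only assert that the hypotheses ``should'' yield it. So what you have is a plausible framework (Noetherian induction on $I$, Baer-type criterion) with the technical heart absent.

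A telling symptom is that you never use condition (4) beyond reading off $\soc\widetilde S=S$, whereas in the paper's argument this condition is the engine. There, an essential extension $S\subset J$ realized inside a quotient $\pi:X_i^{\oplus q}\to Z$ (one first reduces to a single index $i$ using (3), and to $Z$ having simple socle $S$ using essentiality) is handled by decomposing the socle $Y_i^{\oplus q}=U\oplus S$ compatibly with $\pi$ and invoking (4) to lift this to a decomposition $X_i^{\oplus q}=\widetilde U\oplus\widetilde S$; since the non-socle constituents of $\widetilde U$ lie in $\bigcup_{j<i}\cS_j$, which does not contain $S$, the image $\pi(\widetilde U)$ meets $J$ trivially, so $J$ embeds into $Z/\pi(\widetilde U)$, a quotient of $\widetilde S$ which must be all of $\widetilde S$ because $\pi$ is injective on its socle. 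Some such decomposition-and-lifting argument (or an equivalent substitute) is exactly what your ``inner induction on length'' would have to produce; without it the proof is not complete.
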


It suffices to show that a subquotient $J$ of a finite direct sum of $X_i$ admitting an essential extension $S\subset J$ in fact embeds into $\widetilde S$. The following lemma deals with a particular case of this situation. 

\begin{lemma}\label{le.part}
Let $S\subset J$ be an essential extension.  If $J$ is a subquotient of a direct sum $X_i^{\oplus q}$ for some $i$ and some $q$, then $J$ is isomorphic to a subobject of $\widetilde S$. 
\end{lemma}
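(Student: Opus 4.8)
The plan is to work with a presentation $J\cong A/B$, where $B\subseteq A\subseteq N:=X_i^{\oplus q}$, and to distinguish two cases according to how $\soc A$ sits relative to $B$. Two preliminary facts are used throughout. First, $X_i$ has finite length (the Remark), hence so do $N$ and $J$; since $S\subseteq J$ is essential and $S$ is simple, $\soc J=S$, and therefore a morphism out of $J$ is a monomorphism as soon as it is injective on $\soc J$. Second, $\soc N=Y_i^{\oplus q}$ only involves simples from $\cS_i$, while $N/\soc N=(X_i/Y_i)^{\oplus q}$ embeds, through the maps of $\Theta_i$, into a finite direct sum of modules $X_j$ with $j<i$; since an easy induction shows that the composition factors of each $X_j$ lie in $\bigcup_{j'\le j}\cS_{j'}$, it follows that every composition factor of every subquotient of $N/\soc N$ lies outside $\cS_i$. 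Because the second fact pushes the problem strictly downward in $(I,\le)$ and every $I_{\le i}$ is finite, the argument is really an induction on $i$; for it to close one proves the formally stronger statement in which the ambient module is allowed to be a finite direct sum of arbitrary $X_{i'}$'s, the base case being a semisimple ambient module, where necessarily $J=\soc J=S=\widetilde S$.

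Suppose first that $\soc A\subseteq B$. Then $J=A/B$ is a quotient of $A/\soc A$, and since $\soc A=A\cap\soc N$ we have $A/\soc A\hookrightarrow N/\soc N$; hence $J$ is a subquotient of a finite direct sum of modules $X_j$ with $j<i$, and the inductive hypothesis applies.

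Suppose now that $\soc A\not\subseteq B$. Then the nonzero semisimple module $\soc A/(\soc A\cap B)$ embeds into $J$, hence into $\soc J=S$, so it is isomorphic to $S$; being a quotient of $\soc A\subseteq\soc N$, this forces $S\in\cS_i$, so $\widetilde S$ occurs among the indecomposable summands of $X_i$, hence of $N$. Let $P\subseteq N$ be the $\widetilde S$-isotypic part, a finite direct sum of copies $\widetilde S^{(1)},\widetilde S^{(2)},\dots$ of $\widetilde S$, and choose a simple $S_0\subseteq\soc A$ complementary to $\soc A\cap B$; then $S_0\cong S$, $S_0\cap B=0$, the image of $S_0$ in $J$ equals $\soc J$, and $S_0$, being $S$-isotypic, lies in $\soc P=\bigoplus_l\soc\widetilde S^{(l)}$, as does the $S$-isotypic part of $\soc B$. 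I claim there is a map $p\colon N\to\widetilde S$, of the form $p=\pi\circ\theta$ with $\theta\in\operatorname{Aut}(N)$ and $\pi$ the projection onto the summand $\widetilde S^{(1)}$, such that $p$ restricts to an isomorphism on $S_0$ and vanishes on $\soc B$. Granting this: $p$ kills $\soc B$, so $p(B)$ is a quotient of $B/\soc B$, which is a submodule of $N/\soc N$ and hence has all composition factors outside $\cS_i$; but $p(B)\subseteq\widetilde S$, so if $p(B)\neq 0$ then $S=\soc\widetilde S$ would be the socle, hence a composition factor, of $p(B)$, a contradiction. Thus $p(B)=0$, $p$ descends to $\overline p\colon J\to\widetilde S$, and $\overline p$ is an isomorphism on $\soc J$, hence a monomorphism, as required.

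It remains to produce $p$, i.e.\ to find $\theta\in\operatorname{Aut}(N)$ carrying $S_0$ onto $\soc\widetilde S^{(1)}$ and the $S$-isotypic part of $\soc B$ into $\bigoplus_{l\ge 2}\soc\widetilde S^{(l)}$ (the part of $\soc B$ lying in other isotypic components is killed by $\pi$ regardless). Since $S_0$ is disjoint from $\soc B$, this is the problem of moving a line and a complementary subspace of the semisimple $S$-isotypic socle of $N$ into standard position, and it is solved by a suitable automorphism of $P$ extended by the identity off $P$. The point I expect to require genuine care is that $\operatorname{Aut}(N)$, equivalently $\operatorname{Aut}(P)$, act with enough transitivity on $\soc P$ --- equivalently, that $\End\widetilde S$ surject onto $\End S$. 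This is immediate when $\bK$ is algebraically closed, both rings then being $\bK$, and in general it should follow from a short argument exploiting the idempotent decomposition $X_i=\bigoplus_{S'\in\cS_i}\widetilde{S'}^{\oplus p(S')}$; moreover, whenever the $S$-isotypic part of $\soc B$ already misses one coordinate $\soc\widetilde S^{(l)}$ that some complement $S_0$ meets, a mere re-indexing of the summands suffices and no automorphism is needed.
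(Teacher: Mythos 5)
Your strategy is genuinely different from the paper's (which works on the quotient side: it takes an epimorphism $\pi\colon X_i^{\oplus q}\to Z$ with $J\subset Z$, reduces to $\soc Z=S$, splits $\soc X_i^{\oplus q}=U\oplus S$ compatibly with $\pi$, lifts this to $X_i^{\oplus q}=\widetilde U\oplus \widetilde S$ via condition (4), and identifies $Z/\pi(\widetilde U)$ with $\widetilde S$), and while your Case 1 and the general reductions are fine, Case 2 has two genuine gaps. The first is that your induction does not close. The fact you rely on -- no composition factor of $N/\soc N$ lies in $\cS_i$, hence none is isomorphic to $S$ -- is proved only for $N=X_i^{\oplus q}$, but Case 1 sends you back into the argument with a mixed ambient $N=\bigoplus_{i'}X_{i'}^{\oplus q_{i'}}$, and there the analogous statement can fail: in Case 2 you only know $S\in\cS_{i_0}$ for some occurring index $i_0$, and nothing prevents another occurring index $i_1$ with $i_0<i_1$ (the targets of $\Theta_i$ may be comparable), in which case $X_{i_1}/Y_{i_1}$ embeds into sums of $X_k$ with $k<i_1$, possibly including $X_{i_0}$, so $S$ may very well be a composition factor of $N/\soc N$ and of $B/\soc B$. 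Then ``$p(B)\neq 0$ forces $S$ to be a composition factor of $B/\soc B$'' yields no contradiction; killing only $\soc B$ is not enough, since $B$ can contain elements whose $P$-component lies in $\soc P$ without lying in $\soc B$, and these can map onto $S\subset\widetilde S$. So the strengthened statement your induction needs is not established by the argument given.

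The second gap is the one you flag yourself: the construction of $p$ requires $\operatorname{Aut}(P)$ to move $(S_0,B_S)$ into standard position, equivalently (essentially) that $\End\widetilde S\to\End S$ be surjective. Nothing in conditions (1)--(4) is shown to imply this, and at this stage of the paper no assumption on $\End S$ is in force (finite dimensionality, resp.\ $\End S=\bK$, only enter with \Cref{th.comod} and \Cref{subse.univ}); moreover surjectivity of $\End\widetilde S\to\End S$ is itself a consequence of the injectivity of $\widetilde S$, i.e.\ of \Cref{pr.inj_env}, so invoking it here risks circularity. If the image of $\End\widetilde S$ in the division ring $D=\End S$ were a proper division subring $D'$, then in $\soc P\cong S^{\oplus r}$ there are configurations of a line $S_0$ and a complementary submodule $B_S$ (the analogue of $\bR\subset\bC$ with $B_S$ a non-$D'$-rational line) that no row vector with entries in $D'$ separates, so the required $p$ need not exist; your fallback remark about re-indexing coordinates only covers the special case where $B_S$ avoids an entire coordinate. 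The paper's proof sidesteps this mechanism entirely by never moving submodules with automorphisms: it chooses the socle splitting adapted to $\pi$ at the outset and lifts it. To make your route work you would need to derive the extension property $\End\widetilde S\twoheadrightarrow\End S$ from (1)--(4) (or assume it), and restructure the induction so that Case 2 is only invoked when $S$ cannot occur in $N/\soc N$.
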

\begin{proof}
Subquotient means subobject of a quotient. So let $\pi:X_i^{\oplus q}\to Z$ be an epimorphism such that $J\subset Z$. We regard $S$ as a subobject of $\soc Z$ by means of the monomorphisms $S\subset J\subset Z$, and there is a decomposition $\soc Z=T\oplus S$. Since the extension $S\subset J$ is essential, $T$ intersects $J$ trivially, and hence, after factoring out $T$, we can (and will) assume that $Z$ has simple socle $S$.

Now, the socle $Y_i^{\oplus q}$ of $X_i^{\oplus q}$ can be decomposed as $U\oplus S$ in such a manner that $\pi|_{Y_i^{\oplus q}}$ is the projection on the second direct summand. By condition (4) above, this decomposition can be lifted to a decomposition of $X_i^{\oplus q}$ as $\widetilde U\oplus\widetilde S$. The socle $U$ of $\widetilde U$ is already in the kernel of $\pi$, and all other simple constituents of $\widetilde U$ belong to the set $\bigcup_{j<i}\mathcal{S}_j$ which does not contain $S$. It follows that $\pi(\widetilde U)$ intersects $S$ trivially, and hence also $J$.  Consequently, $J$ admits a monomorphism into $Z/\pi(\widetilde U)$. But $Z/\pi(\widetilde{U})$ is a quotient of $\widetilde S$, and it can only be isomorphic to $\widetilde S$ as the restriction of $\pi$ to the simple socle $S$ of $\widetilde S$ is a monomorphism. We are done.   
\end{proof}

\begin{proof_of_inj_env}
Let $\pi: X\to J$ be an epimorphism, for some subobject $\displaystyle X\subset\bigoplus_{j\in I} X_j^{\oplus q_j}$ of a finite direct sum. Any simple subobject $T$ of $X$ belonging to $S_j$ for $j\ne i$ will automatically lie in the kernel of $\pi$ (by condition (3) above), so we may as well assume that $X$ is a subobject of $X_i^{\oplus q_i}$. Then we apply \Cref{le.part}.
\end{proof_of_inj_env}

\begin{corollary}~\label{cor.indinjobj}
	The indecomposable injective objects in $\mathcal{C}$ are isomorphic to arbitrary indecomposable direct summands of the objects $X_i$.
\end{corollary}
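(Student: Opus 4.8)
The plan is to read off \Cref{cor.indinjobj} from \Cref{pr.inj_env} together with the Krull--Schmidt theorem applied to the finite-length objects $X_i$, plus a short socle argument showing that every indecomposable injective of $\cC$ is one of the objects $\widetilde S$. As a preliminary bookkeeping step I would check that \emph{every} simple object of $\cC$ is isomorphic to some $S\in\cS_i$. By condition (1) every simple object is a subquotient, hence a composition factor, of a finite direct sum of the $X_i$, so it suffices to see that the composition factors of $X_i$ all lie in $\bigcup_{j\le i}\cS_j$. This follows by induction on the (finite) cardinality of $I_{\le i}$: by condition (4), $X_i=\bigoplus_{S\in\cS_i}\widetilde S^{\oplus p(S)}$, the socle of each $\widetilde S$ is $S\in\cS_i$, and the remaining constituents of $\widetilde S$ lie among the constituents of the $X_j$ with $j<i$ --- which is exactly the property of $\widetilde U$ used in the proof of \Cref{le.part}. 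Along the way I would also note that each $\widetilde S$ with $S\in\cS_i$ genuinely occurs as a direct summand of $X_i$, since by definition $\cS_i$ consists of the isomorphism classes of simple summands of $Y_i=\soc X_i$, each with multiplicity $p(S)\ge 1$.

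For the easy inclusion, each $X_i$ has finite length (Remark after the definition), so Krull--Schmidt applies and every indecomposable direct summand of $X_i$ is isomorphic to one of the $\widetilde S$, $S\in\cS_i$. By \Cref{pr.inj_env} such an $\widetilde S$ is an injective hull of the simple object $S$, hence injective, and it is indecomposable because it is an essential extension of a simple object. Thus every indecomposable direct summand of any $X_i$ is an indecomposable injective object of $\cC$.

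For the converse, let $E$ be an indecomposable injective object of $\cC$. By condition (1) $E$ is a sum of subquotients of finite direct sums of the $X_i$, each of finite length, so since $E\ne 0$ one of these subobjects is nonzero and therefore contains a simple subobject $S\subset E$. By the bookkeeping step $S\in\cS_i$ for some $i$, and by \Cref{pr.inj_env} the monomorphism $S\subset\widetilde S$ is an injective hull. Using injectivity of $E$, extend the inclusion $S\subset E$ along $S\subset\widetilde S$ to a morphism $\widetilde S\to E$; its kernel meets $S$ trivially and hence vanishes, as $S$ is essential in $\widetilde S$, so $\widetilde S$ embeds into $E$ and, being injective, is a direct summand of it. Since $E$ is indecomposable and $\widetilde S\ne 0$, this forces $E\cong\widetilde S$, an indecomposable direct summand of $X_i$.

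The argument is essentially formal, and I do not anticipate a genuine obstacle. The only point requiring a little care is the bookkeeping step --- guaranteeing that every simple object, in particular a simple subobject of an arbitrary indecomposable injective, lies in some $\cS_i$, so that \Cref{pr.inj_env} is applicable --- together with the (immediate from condition (1)) remark that every nonzero object of $\cC$ contains a simple subobject.
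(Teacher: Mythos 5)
Your argument is correct and is exactly the deduction the paper intends: the corollary is stated without proof as an immediate consequence of \Cref{pr.inj_env}, and you have simply supplied the routine details (every simple object lies in some $\cS_i$, Krull--Schmidt for the finite-length objects $X_i$, and the standard ``indecomposable injective $=$ injective hull of a simple subobject'' argument, with condition (1) guaranteeing that a nonzero object contains a simple subobject). No gaps; this matches the paper's approach.
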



%
\subsection{Bounds on non-vanishing ext functors}
\label{subse.ext}

For $i\le j\in I$ define the \define{defect} $d(i,j)$ to be the number of links (i.e. symbols `$<$') in a longest chain $i<\cdots<j$. Note that, by convention, $d(i,j)$ is not defined unless $i\le j$.

\begin{remark}
  It follows easily from the definition that, for triples $i\le j\le k$, the function $d$ satisfies the opposite of the triangle inequality, i.e. $d(i,k)\ge d(i,j)+d(j,k)$.  
\end{remark}

Let $S\in \cS_s$ and $T\in \cS_t$ be simple objects in an ordered Grothendieck category $\cC$.

\begin{lemma}\label{le.ext_1st_approx}
If $\Ext^p(S,T)\ne 0$ then $s\le t$. 
\end{lemma}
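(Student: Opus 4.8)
The plan is to prove the contrapositive: if $s\not\le t$ then $\Ext^p(S,T)=0$ for all $p$. I would argue by induction on $p$. The base case $p=0$ is immediate, since $S$ and $T$ are non-isomorphic simples (they lie in the disjoint sets $\cS_s$ and $\cS_t$ by condition (3), and $s\ne t$ because $s\not\le t$), so $\Hom(S,T)=0$.

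For the inductive step, I would use the injective hull $\widetilde T$ of $T$ furnished by \Cref{pr.inj_env}, together with the short exact sequence $0\to T\to \widetilde T\to \widetilde T/T\to 0$. Applying $\Hom(S,-)$ gives the long exact sequence, whence for $p\ge 1$ one gets $\Ext^p(S,T)\cong \Ext^{p-1}(S,\widetilde T/T)$ (using injectivity of $\widetilde T$ to kill the higher Ext into it). Now the key point is that every simple constituent $T'$ of $\widetilde T/T$ belongs to some $\cS_{t'}$ with $t'<t$: indeed, by \Cref{cor.indinjobj} we may take $\widetilde T$ to be an indecomposable summand of $X_t$, and by condition (2) its socle is $T$, while all its other composition factors are simple constituents of $X_t/\soc X_t = X_t/Y_i$, which lie in $\bigcup_{j<t}\cS_j$ (this is exactly the observation used in the proof of \Cref{le.part}). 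Filtering $\widetilde T/T$ by simples and using the long exact sequence repeatedly reduces $\Ext^{p-1}(S,\widetilde T/T)$ to a sum of terms $\Ext^{p-1}(S,T')$ with $T'\in\cS_{t'}$, $t'<t$. Since $s\not\le t'$ (as $s\not\le t$ and $t'<t$ would give $s\le t'\le t$, a contradiction), the inductive hypothesis gives $\Ext^{p-1}(S,T')=0$, hence $\Ext^p(S,T)=0$.

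The main obstacle — really a bookkeeping point rather than a deep one — is making the reduction from $\widetilde T/T$ to its individual simple constituents rigorous: one needs that $\widetilde T/T$ has finite length (which holds since $\widetilde T$ is a summand of the finite-length object $X_t$ by the remark following the definition), so a finite filtration with simple quotients exists, and then the long exact sequence in $\Ext^{p-1}(S,-)$ propagates the vanishing across the filtration. One should also be slightly careful that $\widetilde T/T$ could be zero (if $T$ is injective), in which case $\Ext^p(S,T)=0$ trivially for $p\ge 1$. Nothing else should require care, and in particular the hypothesis that each down-set $I_{\le t}$ is finite is what guarantees the induction on the poset (implicit in the induction on $p$ combined with the strictly-descending chain $t'<t$) actually terminates.
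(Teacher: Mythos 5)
Your proof is correct and is essentially the paper's argument, merely phrased as a contrapositive: the same induction on $p$, the same long exact sequence for $0\to T\to \widetilde T\to \widetilde T/T\to 0$, and the same filtration of $\widetilde T/T$ by simples lying in $\cS_{t'}$ with $t'<t$. (The only glossed point, the claimed isomorphism $\Ext^1(S,T)\cong\Hom(S,\widetilde T/T)$, needs the observation that any map $S\to\widetilde T$ lands in the socle $T$ -- but for your vanishing argument the surjection coming from $\Ext^1(S,\widetilde T)=0$ already suffices, so nothing is missing.)
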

\begin{proof}
We proceed by induction on $p$, the base case $p=0$ being immediate. 

Suppose $p>0$ and the statement holds for all smaller $p$.  Let $\widetilde{T}$ be an injective hull of $T$. The long exact sequence
\begin{equation*}
\Ext^{p-1}(S,T)\to \Ext^{p-1}(S,\widetilde{T})\to \Ext^{p-1}(S,\widetilde{T}/T)\to \Ext^p(S,T)\to \Ext^p(S,\widetilde T)\to\cdots\end{equation*}
identifies $\Ext^p(S,T)$ with $\Ext^{p-1}(S,\widetilde{T}/T)$. Indeed, this is clear if $p>1$ because then both the second and fifth terms in the sequence are zero. On the other hand, when $p=1$  the leftmost arrow is an epimorphism because $T$ is the socle of $\widetilde{T}$, and hence the second leftmost arrow is zero. 

The conclusion follows from the induction hypothesis and the fact that the quotient $\widetilde{T}/T$ has a filtration whose successive quotients belong to sets $\cS_{t'}$ for $t'<t$. 
\end{proof}

%
%
%
%
%
%
%
%
%
%
We can now improve on this somewhat, leading to the main result of this subsection. Recall that $d(i,j)$ was only defined for $i\le j$, so \Cref{le.ext_1st_approx} is necessary for the statement below to make sense.

\begin{proposition}\label{pr.ext}
$\Ext^p(S,T)\ne 0$ implies $d(s,t)\ge p$. 
\end{proposition}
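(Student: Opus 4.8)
The plan is to induct on the cohomological degree $p$. The base case $p=0$ is immediate: $\Hom(S,T)\ne 0$ forces $S\cong T$, hence $s=t$ since the sets $\cS_i$ are disjoint, and $d(s,s)=0$. For the inductive step I would reuse essentially verbatim the mechanism from the proof of \Cref{le.ext_1st_approx}: writing $T\subset\widetilde T$ for the injective hull of $T$ and using both that $\Ext^{\ge 1}(S,\widetilde T)=0$ and that $T=\soc\widetilde T$, the long exact sequence attached to $0\to T\to\widetilde T\to\widetilde T/T\to 0$ yields a natural isomorphism $\Ext^p(S,T)\cong\Ext^{p-1}(S,\widetilde T/T)$ for every $p\ge 1$.

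The next step is a d\'evissage along the filtration of $\widetilde T/T$ whose successive quotients are simple objects lying in $\bigcup_{t'<t}\cS_{t'}$ — this is precisely the filtration produced in the proof of \Cref{le.ext_1st_approx}. Splicing the long exact sequences of $\Ext^{p-1}(S,-)$ along this finite filtration shows that $\Ext^{p-1}(S,W)\ne 0$ for at least one of these simple subquotients $W$; fix such a $W$ and let $t'$ be the index with $W\in\cS_{t'}$, so that $t'<t$ strictly. Applying \Cref{le.ext_1st_approx} to $W$ gives $s\le t'$, so $d(s,t')$ is defined, and the induction hypothesis yields $d(s,t')\ge p-1$.

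To conclude I would invoke the superadditivity (``opposite triangle inequality'') of the defect function: since $s\le t'<t$ we have $d(s,t)\ge d(s,t')+d(t',t)$, while $d(t',t)\ge 1$ because the strict relation $t'<t$ already exhibits a chain with one link. Combining these, $d(s,t)\ge(p-1)+1=p$, completing the induction. Once the tools of \Cref{le.ext_1st_approx} are available the argument is mostly bookkeeping; the one point to watch — and the place where carelessness would break the proof — is that the composition factors of $\widetilde T/T$ sit \emph{strictly} below $t$ in the poset, since it is exactly this strictness that contributes the extra unit $d(t',t)\ge 1$ needed to advance the bound from $p-1$ to $p$ at each step.
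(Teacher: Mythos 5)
Your proof is correct and follows essentially the same route as the paper: induction on $p$, the isomorphism $\Ext^p(S,T)\cong\Ext^{p-1}(S,\widetilde T/T)$ from the long exact sequence already used for \Cref{le.ext_1st_approx}, d\'evissage along the filtration of $\widetilde T/T$ by simples in $\cS_{t'}$ with $t'<t$, and then the induction hypothesis combined with the strictness $t'<t$ to gain the extra unit of defect. The paper compresses the last step into $d(s,t)>d(s,t')\ge p-1$, which is exactly your superadditivity observation.
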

\begin{proof}
To prove the inequality for all $p$ we can once more perform induction on $p$ with the case $p=0$ being obvious. Assume now that $p$ is positive and that the induction hypothesis is in place. 

The same long exact sequence that we used in the proof of \Cref{le.ext_1st_approx} implies
\begin{equation*}
  \Ext^p(S,T)\cong \Ext^{p-1}(S,\widetilde{T}/T).
\end{equation*}  
This means that $\Ext^{p-1}(S,T')$ is nonzero for some simple $T'\in \cS_{t'}$, $t'<t$, and the induction hypothesis then ensures that $d(s,t)>d(s,t')\ge p-1$. 
\end{proof}

We will later need the following variant of \Cref{pr.ext} for $\Ext^1$. Before we state it, a bit of terminology.

\begin{definition}\label{def.short}
  A morphism $f:X_i\to X_j$ in $\cC$ is \define{short} if $d(j,i)=1$. 
\end{definition}

\begin{lemma}\label{le.ext1}
Assume furthermore that all morphisms $f\in \Theta_i$ are short. In that case, $\Ext^1(S,T)\ne 0$ implies $d(s,t)=1$.  
\end{lemma}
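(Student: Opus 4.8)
The plan is to refine the argument in the proof of \Cref{pr.ext} by keeping track of the extra hypothesis that the defining morphisms are short. We already know from \Cref{pr.ext} that $\Ext^1(S,T)\ne 0$ forces $d(s,t)\ge 1$, so the only thing to rule out is $d(s,t)\ge 2$, i.e. the existence of some $u$ with $s<u<t$. The key point will be that shortness of the morphisms in $\Theta_i$ constrains the simple constituents that can appear in the first Loewy layer above the socle of an injective hull $\widetilde T$. Indeed, $\widetilde T$ is (an indecomposable summand of) some $X_t$, and by \Cref{le.part}-type reasoning the quotient $\widetilde T/T$ is built from the images of the maps in $\Theta_t$; since those maps are short, the simple subquotients of $\soc(\widetilde T/T)$ lie in $\cS_{t'}$ with $d(t',t)=1$.

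Concretely, I would first invoke the long exact sequence isomorphism $\Ext^1(S,T)\cong \Hom(S,\widetilde T/T)$ used in the previous proofs. A nonzero element here gives a copy of $S$ inside $\soc(\widetilde T/T)$. So it suffices to show that every simple constituent of $\soc(\widetilde T/T)$ belongs to some $\cS_{t'}$ with $d(t',t)=1$. To see this, note that $\widetilde T$ is a direct summand of $X_t$, and the composite $X_t\to X_t/Y_t$ realizes $X_t/Y_t$; the maps in $\Theta_t$ land in modules $X_{t'}$ with $d(t',t)=1$ by the shortness hypothesis, and by condition (2), $Y_t=\bigcap_{f\in\Theta_t}\ker f=\soc X_t$. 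Hence $X_t/\soc X_t$ embeds into $\bigoplus_{f\in\Theta_t} X_{t'(f)}$ via the $f$'s, so its socle — and therefore the socle of the direct summand $\widetilde T/T$ of $X_t/\soc X_t$ — consists of simples occurring in the socles $Y_{t'(f)}$, which lie in $\cS_{t'(f)}$ with $d(t'(f),t)=1$.

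Putting this together: if $\Ext^1(S,T)\ne 0$ then $S$ embeds into $\soc(\widetilde T/T)$, hence $S\in\cS_{t'}$ for some $t'$ with $d(t',t)=1$; combined with the fact that a simple object belongs to exactly one $\cS_i$ (by condition (3), the $\cS_i$ are disjoint), we get $s=t'$ and thus $d(s,t)=1$. I expect the main obstacle to be justifying cleanly that the embedding $X_t/\soc X_t\hookrightarrow\bigoplus_f X_{t'(f)}$ restricts correctly to the indecomposable summand $\widetilde T$ — i.e. making precise the bookkeeping that the socle of $\widetilde T/T$ only sees simples from the $\cS_{t'(f)}$ with $d(t'(f),t)=1$, rather than from lower strata. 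This is really the same kind of summand-tracking done in \Cref{le.part}, and I would handle it by decomposing $X_t=\bigoplus_{S'\in\cS_t}\widetilde{S'}^{\oplus p(S')}$ per condition (4) and matching up the pieces of $\Theta_t$ with the corresponding summands, then reading off the socle of each $\widetilde{S'}/S'$ from the shortness of the relevant morphisms.
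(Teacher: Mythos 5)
Your argument is correct and follows essentially the same route as the paper: identify $\Ext^1(S,T)$ with $\Hom(S,\widetilde T/T)$ via the long exact sequence, and observe that shortness of the morphisms in $\Theta_t$ forces every simple in $\soc(\widetilde T/T)$ to lie in some $\cS_{t'}$ with $d(t',t)=1$, whence $s=t'$ by disjointness of the $\cS_i$. Your explicit justification of that socle claim (embedding $X_t/\soc X_t$ into $\bigoplus_{f\in\Theta_t}X_{t'(f)}$ and using the decomposition from condition (4)) is just a spelled-out version of what the paper asserts directly, so there is no gap.
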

\begin{proof}
  We already know from \Cref{le.ext_1st_approx} and \Cref{pr.ext} that $s\le t$ and $d(s,t)\ge 1$. The fact that the strict inequality $d(s,t)>1$ is impossible follows from the observation that all nontrivial extensions of a simple object by $T$ are subobjects of the injective hull $\widetilde{T}$ of $T$, and by assumption the socle of $\widetilde{T}/T$ is a direct sum of simples $T'\in \cS_{t'}$ with $t'<t$ and $d(t',t)=1$.  
\end{proof}


\subsection{$\cC$ as a comodule category}
\label{subse.comod}

We are now ready to characterize ordered Grothendieck categories as certain categories of comodules.    Given an ordered Grothendieck category $\mathcal{C}$ we denote by $\mathcal{C}_{\fin}$, the full, thick abelian subcategory of $\mathcal{C}$ generated by $X_i$ (i.e., the minimal such category containing $X_i$ for $i\in I$).

\begin{definition}
  A coalgebra $C$ is \define{left semiperfect} if every indecomposable injective \define{right} $C$-comodule is finite dimensional. 
\end{definition}

\begin{remark}
  This is not quite the standard definition. By analogy with the dual notion for algebras, the requirement is that every finite-dimensional \define{left} $C$-comodule have a projective cover. However, \cite[Theorem 10]{Lin77} ensures that the two conditions are equivalent. 
\end{remark}

For a coalgebra $C$ we denote by $\cM^C$ the category of right $C$-comodules, and by $\cM^C_{\fin}$ the category of finite-dimensional right comodules.

\begin{theorem}\label{th.comod}
  Suppose $\mathcal{C}$ is an ordered Grothendieck category such that the endomorphism ring of any simple $\cC$-objects is finite dimensional over $\bK$. Then there is a $\bK$-coalgebra $C$ and an equivalence $\cC\overset{\sim}{\rightsquigarrow} \cM^C$ of $\bK$-linear categories. 

Moreover, any such coalgebra $C$ is left semiperfect, and any such equivalence identifies $\cC_{\fin}$ and $\cM^C_{\fin}$. 
\end{theorem}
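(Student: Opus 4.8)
The plan is to obtain the coalgebra $C$ as the "coendomorphism coalgebra" of a suitable generator, in the spirit of Takeuchi's reconstruction theorems for comodule categories. Concretely, since $\cC$ is a Grothendieck category, it has an injective cogenerator; by \Cref{cor.indinjobj} and condition (1), every indecomposable injective is a direct summand of some $X_i$, so the object $E:=\bigoplus_{i\in I}X_i$ is an injective cogenerator all of whose "pieces" $X_i$ have finite length (by the remark following the definition). First I would show that $\cC$ is locally finite, i.e. every object is the directed union of its finite-length subobjects: this follows from condition (1) together with the finite length of the $X_i$. A locally finite Grothendieck category with a generating family of finite-length objects whose simple subquotients have finite-dimensional endomorphism rings is, by the standard theory (e.g. the comodule-category recognition theorem going back to Takeuchi, or Gabriel's thesis), equivalent to $\cM^C$ for $C=\varinjlim \End_\cC(E_\lambda)^*$ the directed colimit of the linear duals of the (finite-dimensional) endomorphism algebras of the finite-length injectives $E_\lambda$ cofinal among finite-length subobjects of $E$. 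This gives the equivalence $\cC\overset{\sim}{\rightsquigarrow}\cM^C$.

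For the "moreover" part, I would argue as follows. Under any such equivalence, the indecomposable injective objects of $\cC$ correspond to the indecomposable injective right $C$-comodules. By \Cref{cor.indinjobj} these are the indecomposable summands of the $X_i$, and each $X_i$ has finite length, hence is a finite-dimensional comodule; so every indecomposable injective right $C$-comodule is finite-dimensional, which is exactly the definition of $C$ being left semiperfect. Next, to see that the equivalence identifies $\cC_{\fin}$ with $\cM^C_{\fin}$: the finite-dimensional right $C$-comodules are precisely the finite-length objects of $\cM^C$ (a comodule is finite-dimensional iff it has finite length, since $C$ is a directed union of finite-dimensional subcoalgebras and simple comodules are finite-dimensional because the endomorphism rings of simples are finite-dimensional). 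So I must check that $\cC_{\fin}$, defined as the thick abelian subcategory generated by the $X_i$, coincides with the full subcategory of finite-length objects of $\cC$. One inclusion is immediate: each $X_i$ has finite length, and the finite-length objects form a thick abelian subcategory, so $\cC_{\fin}$ consists of finite-length objects. For the reverse inclusion, take any finite-length $M\in\cC$; by condition (1), $M$ is a sum of subquotients of finite direct sums of the $X_i$, and being of finite length it is already such a finite sum, hence a subquotient of a finite direct sum of $X_i$'s, hence an object of the thick subcategory $\cC_{\fin}$. Thus $\cC_{\fin}$ is exactly the finite-length part of $\cC$, which matches $\cM^C_{\fin}$ under the equivalence.

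The main obstacle I anticipate is making the reconstruction of $C$ fully rigorous: one has to be careful that $E=\bigoplus X_i$ really is an injective cogenerator (injectivity of an infinite direct sum is where the Grothendieck axiom AB5, or rather the fact that the $X_i$ are $\Sigma$-injective here because they have finite length and $\cC$ is locally finite, must be invoked), and that the coalgebra built from its finite-dimensional "finite-length approximations" has comodule category equivalent to $\cC$ rather than to some completion or localization of it. The cleanest route is probably to cite the standard equivalence between locally finite Grothendieck categories (with finite-dimensional endomorphism rings of simples) and comodule categories over coalgebras — this is classical and the hypotheses in the statement are tailored precisely to fit it — and then spend the real effort only on the two bookkeeping claims: left semiperfectness via \Cref{cor.indinjobj}, and the identification $\cC_{\fin}=\cM^C_{\fin}$ via condition (1) as above. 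A secondary subtlety is confirming that finite-dimensionality and finite length coincide for $C$-comodules under the finite-dimensional-endomorphism hypothesis, but this is a short argument using that every comodule is a union of finite-dimensional subcomodules.
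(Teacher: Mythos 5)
Your proposal is correct and follows essentially the same route as the paper: the equivalence $\cC\simeq\cM^C$ is obtained by invoking Takeuchi's recognition theorem for locally finite Grothendieck categories with a generating family of finite-length objects (and finite-dimensional endomorphism rings of simples), semiperfectness comes from the indecomposable injectives having finite length via \Cref{pr.inj_env} and \Cref{cor.indinjobj}, and $\cC_{\fin}$ is matched with $\cM^C_{\fin}$ by identifying both with the finite-length (equivalently finite-dimensional) objects. The only minor divergence is that you show every finite-length object lies in $\cC_{\fin}$ directly from condition (1) plus Noetherianity, whereas the paper argues via injective hulls in the comodule category; both arguments are valid.
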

%
%
%
%
\begin{proof}
According to \cite[Definitions 4.1 and 4.4, Theorem 5.1]{Tak77}, in order to prove the first assertion it suffices to check that $\cC$ has a set of generators of finite length. This is immediate: simply take the set consisting of all subquotients of finite direct sums of $X_i$s. 

The existence of a $\bK$-linear equivalence of categories $\cC \overset{\sim}{\rightsquigarrow} \cM^C$ forces $C$ to be left semiperfect, as the indecomposable injectives of $\cC$ are of finite length (\Cref{pr.inj_env})
and hence correspond to finite-dimensional $C$-comodules. 

We now prove the last assertion that any $\bK$-linear equivalence $\cC \overset{\sim}{\rightsquigarrow} \cM^C$ automatically identifies $\cC_{\fin}$ and $\cM^C_{\fin}$. Note first that $\cC_{\fin}$ consists of those objects that are subquotients of finite direct sums of $X_i$s. By \Cref{cor.indinjobj}, the indecomposable direct summands of the $X_i$s are (up to isomorphism) precisely the indecomposable injectives in $\cC$. In general, comodule categories admit injective hulls, and an object is of finite length if and only if its injective hull is a finite direct sum of indecomposable injectives. It follows from this that the objects of $\cC_{\fin}$ are the finite-length objects in $\cC$. In turn, for $C$-comodules, being of finite length is equivalent to being finite dimensional.
\end{proof}

 In the course of the proof of \Cref{th.comod} we have obtained the following result in passing.

\begin{corollary}
	The category $\mathcal{C}_{\fin}$ consists of all objects of finite length in $\mathcal{C}$.
\end{corollary}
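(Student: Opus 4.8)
The plan is to observe that this corollary is essentially a byproduct of the final paragraph of the proof of \Cref{th.comod}, extracted and stated on its own. So the strategy is to rerun that argument without the coalgebra, using only the structure theory already developed for ordered Grothendieck categories.

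First I would recall that $\cC_{\fin}$ is defined as the smallest thick abelian subcategory containing all the $X_i$, so concretely its objects are precisely the subquotients of finite direct sums of the $X_i$ (one checks this class is already closed under subobjects, quotients, and extensions, hence is thick). By the Remark following the definition of ordered Grothendieck category, every such object has finite length; this gives one inclusion, namely that every object of $\cC_{\fin}$ has finite length in $\cC$.

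For the reverse inclusion, suppose $Z\in\cC$ has finite length. By \Cref{cor.indinjobj} the indecomposable injectives of $\cC$ are exactly the indecomposable direct summands of the $X_i$, and by \Cref{pr.inj_env} each simple $S\in\cS_i$ has injective hull $\widetilde S$, a direct summand of $X_i$. Since $\cC$ is a Grothendieck category it has enough injectives and every object has an injective hull; a finite-length object $Z$ has an injective hull which is a finite direct sum of injective hulls of its (finitely many, with multiplicity) composition factors, hence a finite direct sum of summands of the $X_i$, hence a direct summand of a finite direct sum of $X_i$'s. Then $Z$ embeds into such a finite direct sum, so $Z$ is a subobject — in particular a subquotient — of a finite direct sum of $X_i$, i.e. $Z\in\cC_{\fin}$.

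The main thing to be careful about is the claim that a finite-length object in a Grothendieck category has an injective hull that decomposes as a finite direct sum of injective hulls of its composition factors. This is standard but worth spelling out: proceed by induction on length using that an injective hull of $Z$ restricts to an injective hull of $\soc Z$, that $\soc Z$ is a finite direct sum of simples, and that injective hulls commute with finite direct sums. Apart from that bookkeeping, no real obstacle arises, since all the structural input — the identification of indecomposable injectives and the fact that the $\widetilde S$ are summands of the $X_i$ — is already in hand from \Cref{pr.inj_env} and \Cref{cor.indinjobj}.
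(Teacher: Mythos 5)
Your argument is correct, and its mathematical core coincides with the paper's: a finite-length object has essential socle, hence an injective hull that is a finite direct sum of indecomposable injectives, and by \Cref{pr.inj_env} and \Cref{cor.indinjobj} these are direct summands of the $X_i$, so finite-length objects embed into finite direct sums of $X_i$ and land in $\cC_{\fin}$, while conversely everything in $\cC_{\fin}$ has finite length. The difference is one of packaging: the paper extracts the corollary in passing from the proof of \Cref{th.comod}, using the equivalence $\cC\simeq\cM^C$ and the fact that comodule categories admit injective hulls, whereas you run the same argument directly in $\cC$, using only that a Grothendieck category has injective hulls. What your route buys is logical independence from \Cref{th.comod}, and in particular from its hypothesis that endomorphism rings of simple objects are finite dimensional; what the paper's route buys is economy, since the statement falls out of a theorem it proves anyway. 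Two small points of care: ``injective hulls of its composition factors'' should read ``injective hulls of the simple summands of its socle'' (your subsequent paragraph, via $E(Z)=E(\soc Z)$, in effect corrects this), and your identification of $\cC_{\fin}$ with the subquotients of finite direct sums of the $X_i$ requires extension-closedness of that class, which is not completely automatic (one can see it by mapping an extension $0\to A\to B\to C\to 0$ into $E(A)\oplus C$); for the inclusion you actually need there, it is cleaner to note that the finite-length objects of $\cC$ form a thick abelian subcategory containing all $X_i$, so $\cC_{\fin}$ lies inside it by minimality. The paper asserts the same description of $\cC_{\fin}$ without proof, so this is at worst a shared elision rather than a gap.
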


\subsection{$\cC$ as a highest weight category}
\label{subse.hw}

We show next that an ordered Grothendieck category is a highest weight category in the sense of \cite[Definition 3.1]{CPS}.   First, recall the definition from loc. cit.

\begin{definition}\label{def.hw}
  A $\bK$-linear category $\cC$ is a {\it highest weight category} if it is locally Artinian and there exists an interval-finite partially ordered set $\Lambda$ such that
\renewcommand{\labelenumi}{(\arabic{enumi})}
  \begin{enumerate}
    \item There exists a complete collection $\{S(\lambda)\}_{\lambda\in \Lambda}$ of simple objects in $\cC$.
    \item There is a collection $\{A(\lambda)\}_{\lambda\in \Lambda}$ of objects admitting embeddings $S(\lambda)\subset A(\lambda)$ such that all composition factors $S(\mu)$ of $A(\lambda)/S(\lambda)$ satisfy $\mu<\lambda$. Moreover, for all $\lambda,\mu\in \Lambda$ both $\dim_\bK\Hom_\cC(A(\lambda),A(\mu))$ and the multiplicity $[A(\lambda):S(\mu)]$ are finite. 
    \item Each simple object $S(\lambda)$ has an injective hull $I(\lambda)$, and the latter has a good filtration $0=F_0(\lambda)\subset F_1(\lambda)\subset\cdots$.
  \end{enumerate}
\end{definition}
We will not recall the definition of good filtration here.  We just mention that in a good filtration one reqires that $F_1(\lambda)\cong A(\lambda)$ and that the other successive subquotients $F_{n+1}(\lambda)/F_n(\lambda)$ be isomorphic to various $A(\mu)$, $\mu>\lambda$.

\begin{proposition}\label{pr.hw}
  An ordered Grothendieck category is a highest weight category. 
\end{proposition}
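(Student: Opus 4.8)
The plan is to verify the three axioms of \Cref{def.hw} directly, taking $\Lambda$ to be the poset $I$ but relabelled so that it indexes \emph{simple} objects rather than the $X_i$: concretely, set $\Lambda=\bigsqcup_{i\in I}\cS_i$ with a partial order extending that of $I$ (a simple $S\in\cS_i$ dominates $T\in\cS_j$ only when $j\le i$; ties within a single $\cS_i$ are left incomparable). Since each $I_{\le i}$ is finite, every down-set $I_{\le i}$ is finite, and because each $\cS_i$ is finite (the $X_i$ have finite length) this $\Lambda$ is interval-finite, as required. Axiom (1) is immediate: by assumption (3) the $\cS_i$ are disjoint, and by assumptions (1)--(2) every simple object of $\cC$ occurs in the socle of some $X_i$, hence lies in some $\cS_i$, so $\{S\}_{S\in\Lambda}$ is a complete irredundant list of simples. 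Local Artinianity follows from the remark after the definition of ordered Grothendieck category together with the fact that $\cC$ is Grothendieck.

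For axiom (3), \Cref{pr.inj_env} already gives us the injective hull of each simple $S\in\cS_i$, namely $\widetilde S$, an indecomposable summand of $X_i$. So the substantive content is to produce the "standard" objects $A(\lambda)$ of axiom (2) and to exhibit the good filtration on each $\widetilde S$. The natural candidate for $A(S)$, for $S\in\cS_i$, is $A(S):=\widetilde S / (\text{sum of all subobjects of }\widetilde S\text{ that are injective hulls of simples }T\text{ with }t\not< i)$ — or, more cleanly, I would define $A(S)$ to be the largest quotient of $\widetilde S$ whose composition factors, other than the socle $S$ itself, all lie in $\bigcup_{j<i}\cS_j$. One checks that $S\subset A(S)$ (the socle survives because, by construction, we only quotient by subobjects meeting $S$ trivially — here \Cref{le.ext_1st_approx} is the key input: any simple subobject of $\widetilde S/S$ lies in some $\cS_{j}$ with $j<i$, so the filtration of $\widetilde S$ by socle layers has all higher layers supported on strictly smaller indices). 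The finiteness of $\dim_\bK\Hom_\cC(A(S),A(T))$ and of the multiplicities $[A(S):T]$ is then automatic from the finite length of the $X_i$.

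The main obstacle, and the heart of the proof, is constructing the good filtration $0=F_0\subset F_1\subset\cdots$ on $\widetilde S$ with $F_1\cong A(S)$ and successive quotients isomorphic to various $A(T)$ with $T>S$. The idea is to build it by a downward induction organised by the defect/poset structure: having chosen $F_n$, one looks at $\soc(\widetilde S/F_n)$, which by \Cref{le.ext_1st_approx} decomposes as a direct sum of simples $T\in\cS_{t}$ with $t$ strictly above $i$ (using also \Cref{pr.ext} to control how far up they can sit); one then enlarges $F_n$ to $F_{n+1}$ by pulling back, inside $\widetilde S/F_n$, the subobject generated by the injective hull in $\widetilde S/F_n$ of one such $T$, arranging that $F_{n+1}/F_n\cong A(T)$. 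The two things to check carefully are (i) that this process exhausts $\widetilde S$ — which follows because the poset positions of the socle layers strictly increase and, for each fixed $T$, $A(T)$ can appear only finitely often since $\dim\Hom(\widetilde S, \widetilde T)<\infty$ as both are finite length over a Grothendieck category; and (ii) that each quotient really is a full $A(T)$ and not a proper quotient of one, for which one compares composition multiplicities against the "BGG-reciprocity"-type identity $[\,\widetilde S : T\,]$ versus $\sum_{U}[A(U):S]\cdot(\text{number of times }A(U)\text{ occurs})$. An alternative, and perhaps cleaner, route is to invoke \Cref{th.comod}: $\cC_{\fin}\simeq\cM^C_{\fin}$ for a left semiperfect coalgebra $C$, so $\cC_{\fin}^{\mathrm{op}}$ is the category of finite-dimensional modules over the "pointed-like" algebra $C^*$-ish dual, whose Ext-quiver is directed by the poset $I$ thanks to \Cref{pr.ext}; finite-dimensional directed algebras are quasi-hereditary, and quasi-heredity translates back, under the anti-equivalence, precisely into the highest-weight structure on $\cC$. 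I would present the direct socle-layer construction as the main argument and mention the coalgebra route as a remark, since the former keeps the $A(\lambda)$ and their filtrations fully explicit, which is what the later applications to $\bT^3_{\fgl^M}$ will want.
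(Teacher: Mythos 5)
Your setup (the poset $\Lambda=\bigsqcup_i\cS_i$, axiom (1), the finiteness in axiom (2), local Artinianity) is fine, but the part you yourself call the heart of the proof contains a genuine error of direction, and it is precisely the place where the argument should instead collapse to a triviality. In your second paragraph you correctly note (via the definition of $Y_i$ and \Cref{pr.inj_env}) that every simple subquotient of $\widetilde S/S$ lies in some $\cS_j$ with $j<i$: indeed $\widetilde S/S$ embeds via the morphisms in $\Theta_i$ into a direct sum of $X_j$ with $j<i$, whose composition factors all have index $<i$. But then in your construction of the good filtration you assert that $\soc(\widetilde S/F_n)$ is a sum of simples $T\in\cS_t$ with $t$ strictly \emph{above} $i$, citing \Cref{le.ext_1st_approx}; this is the opposite of what that lemma and the structure give (a simple $T'$ in $\soc(\widetilde S/S)$ yields a nonsplit extension of $T'$ by $S$, so $\Ext^1(T',S)\ne 0$ and $t'\le s$). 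Consequently the whole downward induction, together with the unproved ``BGG-reciprocity''-type multiplicity check in step (ii), is both incorrect as stated and unnecessary: since \emph{all} composition factors of $\widetilde S/S$ have index $<i$, your own definition of $A(S)$ (largest quotient of $\widetilde S$ with that property) forces $A(S)=\widetilde S=I(S)$.

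That observation is exactly the paper's proof: one takes $A(\lambda)$ to \emph{be} the injective hull $I(\lambda)$, axiom (2) holds because $A(\lambda)/S(\lambda)$ has all its factors indexed below $\lambda$ (kernel description of $S(\lambda)\subset A(\lambda)$ coming from $Y_i$ and \Cref{pr.inj_env}), and the good filtration of $I(\lambda)$ is the degenerate one $0=F_0\subset F_1=A(\lambda)=I(\lambda)$, so the requirements on higher quotients $A(\mu)$ with $\mu>\lambda$ hold vacuously. If you replace your third paragraph by this one-line choice of filtration (and, if you wish, keep your remark that local Artinianity also follows from \Cref{th.comod}), the proof is complete; the quasi-hereditary/coalgebra detour is not needed.
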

\begin{proof}
  Let $\cC$ be an ordered Grothendieck $\bK$-linear category, and $I$, $X_i$, $Y_i$, $S_i$ be as in \Cref{subse.indinj}.  It is well known that comodule categories are locally Artinian, so $\cC$ is locally Artinian by \Cref{th.comod}. 

Set $\Lambda:=\bigcup_{i\in I}\cS_i$, and regard it as an ordered set by declaring $\lambda< \mu$ precisely when $\lambda\in \cS_i$ and $\mu\in \cS_j$ with $i<j$. Because $I$ has finite down-sets, $(\Lambda,\le)$ clearly has finite intervals. By definition, the poset $\Lambda$ indexes the complete set $\bigcup \cS_i$ of isomorphism classes of simple objects in $\cC$, taking care of part (1) of \Cref{def.hw}. 

Now take $A(\lambda)$ to be the injective hull of the simple object $S(\lambda)$ (hence, according to \Cref{cor.indinjobj}, $A(\lambda)$ is a summand of $X_i$ if $\lambda\in \cS_i$). The finiteness conditions in part (2) are satisfied because the objects $X_i$ have finite length. The condition that $A(\lambda)/S(\lambda)$ admits a filtration with subquotients $S(\mu)$, $\mu<\lambda$ follows from the fact that $S(\lambda)\subset A(\lambda)$ is the kernel of a map of $A(\lambda)$ into a sum of $A(\mu)$ for $\mu<\lambda$.  This latter fact is a direct corollary of \Cref{pr.inj_env} and the definition of $Y_i$.  

Finally, the sought-after good filtration of the injective hull $I(\lambda)\supset S(\lambda)$, in our case, is as small as possible: $0=F_0(\lambda)\subset A(\lambda)=I(\lambda)$. Because here the filtration is so degenerate, the other properties required of good filtrations hold vacuously. 
\end{proof}

We end this brief subsection by noting that our main motivation for introducing the formalism of ordered Grothendieck categories was that it allows a relatively quick explicit characterization of the indecomposable injective objects in the category.  In a general highest weight category the indecomposable injectives are not even required to have finite length, so ordered Grothendieck categories are a rather special class of highest weight categories.  



\subsection{Universal properties for $\cC$ and $\cC_{\text{\tiny{FIN}}}$}
\label{subse.univ}

We henceforth assume that all endomorphism rings of simple objects in $\cC$ are equal to $\bK$. Together with our already standing assumptions, this ensures that the categories $\cC$ and $\cC_{\fin}$ are universal in a certain sense that we will make precise shortly. 

Denote by $\cC_X$ the smallest subcategory of $\cC$ containing the $X_i$ as objects and the $\Theta_i$ as sets of morphisms, and closed under taking $\bK$-linear combinations, compositions, finite direct sums and direct summands of morphisms.

\begin{theorem}\label{th.univ}
Let $\cD$ be an abelian category and $F:\cC_X\rightsquigarrow \cD$ be a $\bK$-linear functor. 
\begin{enumerate}
\renewcommand{\labelenumi}{(\alph{enumi})}
\item $F$ extends to a left-exact functor $\cC_{\fin}\rightsquigarrow \cD$ uniquely up to natural isomorphism.  
\item If in addition $\cD$ has arbitrary coproducts, $F$ extends uniquely (up to natural isomorphism) to a left-exact, coproduct-preserving functor $\cC\rightsquigarrow \cD$. 
\end{enumerate}
\end{theorem}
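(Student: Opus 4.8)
The plan is to build the extension in two stages, first proving (a) and then deriving (b) from it by a direct-limit argument. I would start from the key structural fact, already in hand from \Cref{th.comod} and \Cref{cor.indinjobj}: the objects $X_i$ (and their indecomposable summands $\widetilde S$) are precisely the indecomposable injectives of $\cC$, and $\cC_{\fin}$ is the category of finite-length objects, equivalently finite-dimensional comodules over a semiperfect coalgebra $C$. Because the endomorphism rings of simples are $\bK$, the injectives $X_i$ are a complete set of (finitely many isomorphism types in each ``layer'') injective cogenerators for $\cC_{\fin}$, and every object of $\cC_{\fin}$ has a finite injective copresentation $0\to Z\to J^0\to J^1$ with $J^0,J^1$ finite direct sums of the $X_i$. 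The morphism $J^0\to J^1$ is, by full faithfulness considerations and the structure of $\Hom$ between injectives, expressible in $\cC_X$: here one uses that $\Hom_{\cC}(X_i,X_j)$ is spanned, after passing to indecomposable summands, by compositions of the structural maps in $\Theta_i$ (this is essentially the content of how $\cC_X$ is generated, together with \Cref{le.ext1}-type control of maps between injectives). This is the step I expect to be the main obstacle: one must show that $\cC_X$ already contains \emph{all} morphisms between finite sums of $X_i$, not just those built formally from $\Theta_i$ — i.e. that the ``generators and relations'' presentation of $\cC_X$ really recovers the full subcategory of injectives of $\cC$. I would prove this by induction on the defect $d$, using \Cref{pr.inj_env} to realize each $X_i$ as the kernel of its canonical map into $\bigoplus_{f\in\Theta_i}X_j$ and peeling off one layer at a time.

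Granting that, the construction of the extension in (a) is forced: for $Z\in\cC_{\fin}$ with injective copresentation $0\to Z\to J^0\xrightarrow{\varphi} J^1$, set $\overline F(Z):=\ker\bigl(F(\varphi):F(J^0)\to F(J^1)\bigr)$, which makes sense because $\cD$ is abelian. I would check this is independent of the chosen copresentation up to canonical isomorphism by the usual comparison argument: two injective copresentations are connected by a chain map lifting $\id_Z$, unique up to homotopy, and applying $F$ (which is additive) turns a homotopy between chain maps into a homotopy, hence an isomorphism on kernels. Functoriality of $\overline F$ follows the same way — a morphism $Z\to Z'$ lifts to a map of copresentations, unique up to homotopy — and left-exactness is automatic since $\overline F$ is defined as a kernel and kernels commute with kernels; more carefully, given a left-exact sequence one builds compatible injective copresentations (here using that $\cC_{\fin}$ has enough injectives among the $X_i$) and applies the horseshoe-type lemma. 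Uniqueness up to natural isomorphism holds because any left-exact $G:\cC_{\fin}\rightsquigarrow\cD$ restricting to $F$ on $\cC_X$ must satisfy $G(Z)=\ker(G(\varphi))=\ker(F(\varphi))$ for every copresentation, so $G\cong\overline F$ canonically.

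For (b), I would use that $\cC\simeq\cM^C$ and every comodule is the directed union of its finite-dimensional subcomodules; equivalently, every object of $\cC$ is a filtered colimit of objects of $\cC_{\fin}$, and moreover a filtered colimit of \emph{subobjects}, so these colimits are ``nice'' (monomorphic, hence exact in the Grothendieck category $\cC$). Given $\cD$ with arbitrary coproducts — hence, being abelian, with arbitrary filtered colimits — define $\widehat F(Z):=\varinjlim \overline F(Z_\alpha)$ over the filtered system of finite-length subobjects $Z_\alpha\subseteq Z$. One checks $\widehat F$ is well-defined and functorial (a morphism $Z\to Z'$ sends finite-length subobjects into finite-length subobjects, inducing a map of filtered systems), that it extends $\overline F$ (a finite-length $Z$ is cofinal in its own system), and that it preserves coproducts since coproducts commute with filtered colimits and with the finite-length-subobject construction. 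Left-exactness of $\widehat F$ follows because filtered colimits are exact in $\cD$ (abelian with arbitrary coproducts, i.e. satisfying AB5 after one notes the colimits here are filtered) and $\overline F$ is left-exact: a left-exact sequence in $\cC$ restricts to compatible left-exact sequences of finite-length subobjects. Uniqueness up to natural isomorphism: any left-exact, coproduct-preserving $G$ extending $F$ restricts on $\cC_{\fin}$ to something isomorphic to $\overline F$ by part (a), and then $G(Z)=G(\varinjlim Z_\alpha)$; since $G$ preserves coproducts and is left-exact it preserves the filtered colimits at issue (these are computed as cokernels of maps between coproducts), forcing $G(Z)\cong\varinjlim G(Z_\alpha)\cong\varinjlim\overline F(Z_\alpha)=\widehat F(Z)$ naturally. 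The only genuinely delicate point beyond the first-paragraph obstacle is verifying that the filtered colimits used in (b) are preserved by a left-exact coproduct-preserving functor; I would handle this via the standard presentation of a filtered colimit as a cokernel of a difference map between two coproducts over the indexing category.
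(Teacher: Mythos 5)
Your part (a) is essentially sound, though it takes a more hands-on route than the paper: you build the extension explicitly as $Z\mapsto\ker F(\varphi)$ from an injective copresentation and verify well-definedness, functoriality and left-exactness by comparison-of-copresentations and horseshoe/snake arguments, whereas the paper extends $F$ to $D^+(\cC_{\fin})$ (built from complexes of injectives) and takes $H^0$ with respect to the standard t-structures. Both arguments hinge on the same key point, which you correctly single out as the main obstacle: that $\cC_X$ is the \emph{full} subcategory of $\cC_{\fin}$ on the injectives (this is \Cref{le.CX_full} in the paper, proved exactly by your suggested induction on the defect). One small correction to your sketch of that step: $X_i$ is not the kernel of the canonical map to $\bigoplus_{f\in\Theta_i}X_j$ --- its socle $Y_i$ is --- so the induction runs by noting that a non-injective map $\widetilde S_i\to\widetilde S_j$ kills the socle, hence factors through $\widetilde S_i/S_i$, which embeds via $\Theta_i$ into injectives indexed by $j<i$; one then extends by injectivity of the target and applies the induction hypothesis.

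Part (b) has a genuine gap. You define $\widehat F(Z)=\varinjlim\overline F(Z_\alpha)$ over finite-length subobjects and justify left-exactness by asserting that $\cD$ satisfies AB5 ``after one notes the colimits here are filtered''; but an abelian category with arbitrary coproducts need not have exact filtered colimits (e.g.\ the opposite of a module category has all coproducts, yet filtered colimits there are cofiltered limits of modules and are not exact), and the hypothesis of the theorem is only the existence of coproducts. Without AB5 in $\cD$ your $\widehat F$ need not even preserve monomorphisms. The uniqueness step has the same defect in a different guise: preserving a filtered colimit presented as a cokernel of a map between coproducts is a \emph{right}-exactness property, so it does not follow from $G$ being left exact and coproduct-preserving. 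The paper's proof of (b) avoids all of this: one first extends $F$, coproduct-preservingly (and this is forced), to arbitrary direct sums of indecomposable injectives; by \Cref{th.comod} $\cC$ is a comodule category, so these are \emph{all} the injectives of $\cC$; one then reruns the argument of part (a) at the level of $\cC$, using injective copresentations in $\cC$ rather than filtered colimits in $\cD$. Replacing your colimit construction by that argument repairs both the existence and the uniqueness claims.
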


We first need the following observation.

\begin{lemma}\label{le.CX_full}
The subcategory $\cC_X\subset \cC$ is full on the finite direct sums of indecomposable injectives in $\cC$. 
\end{lemma}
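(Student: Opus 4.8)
The plan is to show that $\cC_X$ is full when restricted to objects that are finite direct sums of indecomposable injectives of $\cC$; equivalently, that every morphism in $\cC$ between two such objects already lies in $\cC_X$. By \Cref{cor.indinjobj} the indecomposable injectives are exactly the indecomposable summands of the $X_i$, and $\cC_X$ is by definition closed under finite direct sums and direct summands of morphisms, so it suffices to understand $\Hom_\cC$-spaces between the $X_i$ themselves — or rather, to show $\Hom_{\cC_X}(X_i,X_j)\to\Hom_{\cC}(X_i,X_j)$ is surjective for all $i,j$, and then descend to summands. So the real task is: given $i,j$ and an arbitrary $f\colon X_i\to X_j$ in $\cC$, exhibit $f$ as a $\bK$-linear combination of composites of the structural morphisms in the $\Theta_i$ (and identities).

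First I would reduce to the case where the relevant indices are comparable. Using that $X_i=\bigoplus_{S\in\cS_i}\widetilde S^{\oplus p(S)}$ with $\widetilde S$ having simple socle $S$, a map $\widetilde S\to\widetilde T$ (for $S\in\cS_s$, $T\in\cS_t$) is either zero on the socle or injective on the socle; composing with the inclusion of the socle and using \Cref{le.ext_1st_approx} / the socle-filtration structure forces the composition factors to line up so that any nonzero such map requires $s\le t$ — more precisely, the image of $\widetilde S$ has socle containing $S$, so $S$ must be a composition factor of $\widetilde T$, which by \Cref{pr.inj_env} and the definition of $Y_j$ forces $s\le t$. So off the comparable locus there is nothing to prove. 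The heart of the matter is then: for $s\le t$, every $\widetilde S\to\widetilde T$ comes from $\cC_X$. Here I would argue by induction on the defect $d(s,t)$. When $d(s,t)=0$, i.e. $s=t$, since $\End_\cC(S)=\bK$ and $\widetilde S$ is the injective hull of $S$, the endomorphism ring of $\widetilde S$ is local, so a map $\widetilde S\to\widetilde T$ with $s=t$ is either nilpotent-into-a-different-summand (but distinct summands of the same $X_i$ have sockels with the same simple type, forcing it to be an iso or zero after adjusting) — the clean statement is that $\Hom_\cC$ between summands of a fixed $X_i$ is spanned by the identity and the idempotents, all of which live in $\cC_X$ by construction.

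For the inductive step, given $f\colon X_i\to X_j$ with $d$ between the relevant indices positive: the key point is that the $\Theta_i$ are exactly the structural maps out of $X_i$ whose kernels cut out $Y_i=\soc X_i$. Given $f$, I would look at its restriction to $\soc X_i=Y_i$. By \Cref{pr.inj_env}, $\widetilde S$ embeds into its injective hull — in fact $\widetilde S$ IS an injective hull of $S$ — so a map $\widetilde S\to X_j$ that kills $S$ factors through $\widetilde S/S$, whose socle sits in $\bigcup_{s'<s}\cS_{s'}$; dually a map out of $X_j$ detecting this can be taken among the $\Theta_j$. The mechanism I expect to work: write $f$ as follows — either $f$ is injective on $\soc X_i$, in which case (since the socle types are disjoint across the $\cS$'s and $\widetilde S$ is an injective hull) $f$ is split mono onto a summand and is accounted for by the summand-closure of $\cC_X$; or $f$ kills some simple socle summand $S\in\cS_s$, hence factors as $\widetilde S\twoheadrightarrow\widetilde S/S\to X_j$, and one peels $\widetilde S/S$ apart using the $\Theta$-maps and induction on $d$. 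The honest difficulty — and what I expect to be the main obstacle — is bookkeeping the factorization through the $\Theta$-maps so that the intermediate objects that appear (subquotients of the $X_k$ for $k<i$) are again finite direct sums of indecomposable injectives on which induction applies; a priori $\widetilde S/S$ is not injective. The resolution is to embed $\widetilde S/S$ into its injective hull, which by \Cref{cor.indinjobj} is a finite direct sum of summands of $X_k$'s with $k<i$, compose there, and check that the resulting element of $\Hom_{\cC_X}$ maps to $f$; closure of $\cC_X$ under $\bK$-linear combinations and composition, plus the finite-length hypothesis guaranteeing the induction terminates (finite down-sets), is what makes this go through.
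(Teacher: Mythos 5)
Your skeleton is the same as the paper's: reduce to indecomposable injective summands $\widetilde S_i\subset X_i$, induct on the defect, and split into the case where $f$ is injective on the socle (hence an isomorphism, unique up to scalar) and the case where $f$ kills the socle and factors through $\widetilde S_i/S_i$, which is then mapped into a sum of injectives of strictly smaller index and extended using injectivity of the target. But at the crux you replace the one ingredient that makes the induction close. You correctly flag that $\widetilde S_i/S_i$ is not injective, and your resolution is to embed it into \emph{its injective hull} (a finite sum of $\widetilde S_k$, $k<i$, by \Cref{cor.indinjobj}), extend there, and ``check that the resulting element of $\Hom_{\cC_X}$ maps to $f$.'' That check is precisely what is unavailable: with an arbitrary hull embedding, the composite $\widetilde S_i\twoheadrightarrow \widetilde S_i/S_i\hookrightarrow \bigoplus_k\widetilde S_k$ is just some morphism of $\cC$ whose source still has index $i$; it is not a structural map, it is not covered by the induction hypothesis (the induction only shrinks the source index, or the defect to the target, and here neither has shrunk), and attempting to handle it by the same dichotomy leads to an infinite regress. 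The paper avoids any choice of embedding: since $Y_i=\bigcap_{f\in\Theta_i}\ker f=\soc X_i$, the morphisms of $\Theta_i$ restricted to the summand $\widetilde S_i$ have joint kernel exactly $S_i$, so they \emph{themselves} furnish a monomorphism $\widetilde S_i/S_i\hookrightarrow\bigoplus_{k<i}X_k$ through maps lying in $\cC_X$ by definition; one then extends $\bar f$ to this sum by injectivity of $\widetilde S_j$ and applies the induction hypothesis only to the components $\widetilde S_k\to\widetilde S_j$ with $k<i$. You mention ``peeling apart using the $\Theta$-maps'' in passing, but your actual resolution abandons them, and that is where the argument breaks.

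Two smaller points. Your comparability reduction has the inequality reversed: for a nonzero $f:\widetilde S\to\widetilde T$ the image is a subobject of $\widetilde T$, so its socle is $T$ (not, as you write, something containing $S$, which fails exactly when $f$ kills the socle); hence $T$ is a composition factor of $\widetilde S$ and the conclusion is $t\le s$, not $s\le t$ --- consistent with the remark following \Cref{le.CX_full} that there are no nonzero morphisms $X_i\to X_j$ for $i<j$. This reduction is inessential, but as stated it is false. Your base case is also loosely phrased (``spanned by the identity and the idempotents''); the clean statement is that every nonzero endomorphism of $\widetilde S$ is injective on the socle, hence an isomorphism, and $\End(S)=\bK$ then gives $\End(\widetilde S)=\bK$, which is how the paper treats the monomorphism case.
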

\begin{proof}
Let $\widetilde{S}_i\subset X_i$ and $\widetilde{S}_j\subset X_j$ be two indecomposable injectives  for $i,j\in I$, and $f:\widetilde{S}_i\to \widetilde{S}_j$ be a morphism in $\cC$. We will prove that $f$ is a morphism in $\cC_X$ by induction on the defect $d(i,j)$,
with the convention that if $d(i,j)=0$ then this assumption is true vacuously. 

If $f$ is a monomorphism, then it is an isomorphism. Moreover, $f$ is unique up to scaling by our assumption that the $\cC$-endomorphism rings of simple objects equal $\bK$, so $f$ must indeed be a morphism in $\cC_X$ as defined above.

Now suppose $f$ is not a monomorphism. Then $f$ annihilates the socle $S_i$ of $\widetilde{S}_i$, and hence can be thought of as a morphism from the quotient $\widetilde{S}_i/S_i$ to $\widetilde{S}_j$.  By assumption, this quotient admits a monomorphism into a finite direct sum of $\widetilde{S}_k$ (with $k<i$ in $I$) via morphisms in $\Theta_i$, and $f:\widetilde{S}_i/S_i\to \widetilde{S}_j$ extends to the direct sum of the $\widetilde{S}_k$ by the injectivity of $\widetilde{S}_j$. Since all $k$ are less than $i$, the conclusion follows from the induction hypothesis. 
\end{proof}

\begin{remark}
  Note that the above proof shows that there are no nonzero morphisms $X_i\to X_j$ for $i<j$. 
\end{remark}

\begin{proof_of_univ}
{\bf (a)} 
By \Cref{le.CX_full} we know that $\cC_X$ is the full subcategory of $\cC_{\fin}$ on the injective objects in $\cC_{\fin}$. Since the lower-bounded derived category $D^+(\cC_{\fin})$ can be built out of complexes of injectives, $F$ extends to a functor $DF:D^+(\cC_{\fin})\rightsquigarrow D^+(\cD)$ of triangulated categories. 

Now consider the standard t-structures $(D^{\le 0},D^{\ge 0})$ for both $D=D^+(\cC_{\fin})$ and $D=D^+(\cD)$. Since $DF$ is clearly left t-exact in the sense that $DF(D^+(\cC_{\fin})^{\ge 0})\subset D^+(\cD)^{\ge 0}$, the functor restriction of $H^0(DF):D^+(\cC_{\fin})\rightsquigarrow \cD$ to the heart $\cC_{\fin}$ of the standard t-structure will be left exact (see e.g. \cite[Proposition 8.1.15]{HTT08}). It is moreover easily seen to extend $F$. 

{\bf (b)} $F$ extends uniquely in a coproduct-preserving fashion to the full subcategory of $\cC$ on \define{arbitrary} direct sums of indecomposable injectives. Since these are all of the injectives in $\cC$ (a consequence of the fact that $\cC$ is a comodule category by \Cref{th.comod}), the argument from part (a) then extends $F$ uniquely as a left exact, coproduct-preserving functor into $\cD$ as desired. 
\end{proof_of_univ}

\subsection{The monoidal case}
\label{subse.tensor}

Recall that all of our additive categories are assumed $\bK$-linear.

\begin{definition}\label{def.tensor_Groth}
  An additive category is \define{monoidal} if it has a monoidal structure such that all functors of the form $x\otimes\bullet$ and $\bullet\otimes x$ are exact. 
 
An \define{additive tensor category} (or just tensor category for short) is an additive category that is monoidal in the above sense and for which in addition its monoidal structure is symmetric. 

A \define{tensor functor} is a symmetric monoidal $\bK$-linear functor between tensor categories. 
\end{definition}

Now, in the setting of \Cref{th.univ}, suppose $\cC$ is a tensor category. Suppose furthermore that the set $\{X_i\}$ and the linear span of $\bigcup \Theta_i$ are closed under tensor products. Then $\cC_{\fin}$ is a tensor category. 

Finally, denote by $\cC_{X,\otimes}$ the smallest tensor subcategory containing $X_i$, $\Theta_i$, and closed under direct summands. Adapting the proof of \Cref{th.univ} in a routine fashion we get

\begin{theorem}\label{th.tens_univ}
Let $\cD$ be a tensor abelian category and $F:\cC_{X,\otimes}\rightsquigarrow \cD$ a $\bK$-linear tensor functor. 
\begin{enumerate}
\renewcommand{\labelenumi}{(\alph{enumi})}
\item $F$ extends to a left exact tensor functor $\cC_{\fin}\rightsquigarrow \cD$ uniquely up to the obvious notion of tensor natural isomorphism.  
\item If in addition $\cD$ has arbitrary coproducts, $F$ extends uniquely to a left exact, coproduct-preserving tensor functor $\cC\rightsquigarrow \cD$. \qedhere
\end{enumerate}
\end{theorem}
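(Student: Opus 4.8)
The plan is to run the argument of \Cref{th.univ} essentially verbatim, keeping track of the extra monoidal data at each step. The point is that all the constructions used to extend $F$ — passing to the derived category, invoking the t-structure, taking $H^0$ — are compatible with symmetric monoidal structure, so nothing conceptually new is required; only bookkeeping.

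First I would establish the monoidal analogue of \Cref{le.CX_full}: the subcategory $\cC_{X,\otimes}\subset\cC$ is full on finite direct sums of indecomposable injectives. This is actually immediate from \Cref{le.CX_full} itself, since $\cC_{X,\otimes}\supseteq\cC_X$ and both have the same objects up to direct summands (the indecomposable injectives, by \Cref{cor.indinjobj}); the hom-spaces in $\cC_{X,\otimes}$ between such objects already contain those of $\cC_X$, which by \Cref{le.CX_full} are all of $\Hom_{\cC}$. So $\cC_{X,\otimes}$ is precisely the full subcategory of $\cC_{\fin}$ on the injectives, now equipped with the restriction of the tensor structure (well-defined because $\{X_i\}$ and $\mathrm{span}\bigcup\Theta_i$ are closed under $\otimes$, so $\cC_{\fin}$ is a tensor category and its injectives are closed under $\otimes$ as well — tensoring an injective with anything stays injective here since $\bullet\otimes x$ is exact and preserves the relevant class, or more simply since it lands among summands of the $X_i$).

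Next, for part (a): a symmetric monoidal functor $F:\cC_{X,\otimes}\rightsquigarrow\cD$ between (the injective subcategory and) $\cD$ induces a symmetric monoidal triangulated functor $DF:D^+(\cC_{\fin})\rightsquigarrow D^+(\cD)$, because the derived tensor product on $D^+(\cC_{\fin})$ can be computed using complexes of injectives (those objects on which $F$ is defined and already monoidal), and likewise on $D^+(\cD)$; here one uses that $\bullet\otimes x$ is exact on $\cC_{\fin}$, so no further derived-functor subtleties arise. Then $DF$ is left t-exact for the standard t-structures, and taking $H^0$ and restricting to the heart gives a left-exact functor $\cC_{\fin}\rightsquigarrow\cD$ extending $F$; the symmetric monoidal structure on $DF$ descends to one on $H^0(DF)$ because $H^0$ is a (lax, here strong on the relevant objects) monoidal functor from the derived category to the heart and the Künneth maps are isomorphisms on complexes of injectives. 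Uniqueness up to tensor natural isomorphism follows as in \Cref{th.univ} since any left-exact extension agrees with $F$ on injectives and a left-exact functor out of $\cC_{\fin}$ is determined by its restriction to injectives. Part (b) is then identical to part (b) of \Cref{th.univ}: first extend coproduct-preservingly and monoidally to the full subcategory of $\cC$ on arbitrary direct sums of indecomposable injectives — which, by \Cref{th.comod}, are all injectives of $\cC$ — and then repeat the derived-category argument.

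The main obstacle — really the only point requiring care — is checking that the monoidal structure genuinely propagates through the derived-category step, i.e. that $DF$ is symmetric monoidal and that $H^0$ of a symmetric monoidal triangulated functor restricts to a symmetric monoidal functor on the heart. This is where one must be a little careful about coherence: one needs the tensor product of two complexes of injectives to again be (quasi-isomorphic to) a complex of injectives and the Künneth/lax-monoidal comparison maps for $H^0$ to be isomorphisms in the relevant degree, which holds precisely because $\otimes$ is exact in each variable on $\cC_{\fin}$ (so there is no higher $\mathrm{Tor}$) and injectives are $\otimes$-stable here. Everything else is routine and I would, as the theorem statement already signals with ``in a routine fashion,'' simply say so.
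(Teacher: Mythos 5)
Your proposal is correct and follows exactly the route the paper intends: the paper itself offers no separate argument for \Cref{th.tens_univ}, stating only that the proof of \Cref{th.univ} adapts ``in a routine fashion,'' and your write-up is precisely that adaptation (fullness of $\cC_{X,\otimes}$ on injectives via \Cref{le.CX_full}, the derived-category/t-structure extension, and the observation that exactness of $\otimes$ and $\otimes$-stability of the injectives make the monoidal data propagate). The extra care you take with the monoidal coherence through $DF$ and $H^0$ is exactly the bookkeeping the paper leaves implicit.
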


\subsection{An application: the category $\bT^2_{\fgl^M}$}
\label{subse.appl}

We illustrate the usefulness of the preceding material with an application to the category $\bT^2_{\fgl^M}$ introduced in \Cref{se.prel}. The setup for doing so is as follows. 

 We regard $\bT^2_{\fgl^M}$ as $\cC_{\fin}$, where $\cC$ is the Grothendieck category consisting of objects that are (possibly infinite) sums of objects in $\bT^2_{\fgl^M}$. The poset $I$ is the set of ordered pairs $(m,n)$ of nonnegative integers, and the order on $I$ is the smallest partial order such that $(m,n)\ge (m-1,n-1)$ for positive integers $m$ and $n$. The object $X_i$ corresponding to the element $i\in (m,n)$ in $I$ is by definition $(V_*)^{\otimes m}\otimes V^{\otimes n}$. 

Next, for $i=(m,n)$, we define $\Theta_i$ to be the set of contractions 
\begin{equation*}
  \mathrm{p}^{m,n}_{r,s}:(V_*)^{\otimes m}\otimes V^{\otimes n}\to   (V_*)^{\otimes (m-1)}\otimes V^{\otimes (n-1)}\,,
\end{equation*}
where
\begin{align*}
\mathrm{p}^{m,n}_{r,s}\left(y_1\otimes \cdots \otimes y_m\otimes x_1 \otimes \cdots\otimes x_n\right)\phantom{p_{r,s}\left(y_1\otimes \cdots \otimes y_m\otimes x_1 \otimes \cdots\otimes x_n\right)p\left(y_r\otimes x_s\right)\,y_1\otimes\cdots\otimes }
\\=\mathrm{p}\left(y_r\otimes x_s\right)\,y_1\otimes\cdots\otimes y_{r-1}\otimes y_{r+1}\otimes \cdots \otimes y_m
\otimes x_1\otimes \cdots \otimes x_{s-1}\otimes x_{s+1}\otimes \cdots \otimes x_n\,.
\end{align*}  The fact that conditions (1) -- (4) from the definition of an ordered Grothendieck category for this choice of $X_i$ and $\Theta_i$ hold follows from results of \cite{PS}.  Note also that the contractions $\mathrm{p}_{r,s}^{m,n}$ are short morphisms in the sense of \Cref{def.short}.

With this in place, \Cref{pr.inj_env} gives now an alternate proof for the injectivity of the objects $(V_*)^{\otimes m}\otimes V^{\otimes n}$ in $\bT^2_{\fgl^M}$. Cf. \cite{DPS,SS} for different approaches to injectivity.

\section{The three-diagram category}\label{se.3d}


\subsection{Simple objects}


We now begin our study of the category $\bT^3_{\fgl^M}$, see \Cref{se.prel} for the definition.  The aim of this subsection is to describe the simple objects of 
$\bT^3_{\fgl^M}$.  
We first show that the simple $\fg^M$-modules $(V^*/V_*)_\lambda\otimes V_{\mu,\nu}$ from \cite{Chi14} are mutually nonisomorphic as $(\lambda,\mu,\nu)$ ranges over all ordered triples of Young diagrams.

We start with the following variant of \cite[Lemma 3]{Chi14}. Before stating it, recall that a Lie subalgebra $I$ of a Lie algebra $\fg$ acts \define{densely} on a $\fg$-module $W$ if for any finite set $\{w_i\}_{i=1}^n\subset W$ and any $g\in \fg$ there is some $g'\in I$ such that 
\begin{equation*}
  g'w_i = gw_i\,\, \text{for any}\ i,\  1\le i\le n.
\end{equation*}

\begin{lemma}\label{le.hom_split}
  Let $\fG$ be a Lie algebra and $I\subseteq \fG$ be an ideal. Let $U$ and $U'$ be $\fG/I$-modules, and $W$ be a $\fG$-module on which $I$ acts densely and irreducibly with $\End_I(W)=\bK$. Then 
  \begin{equation*}
    \Hom_{\fG}(U\otimes W,U'\otimes W) = \Hom_{\fG}(U,U').
  \end{equation*}
\end{lemma}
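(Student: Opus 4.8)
The plan is to show that restriction gives a natural map $\Hom_{\fG}(U,U')\to\Hom_{\fG}(U\otimes W,U'\otimes W)$, $\varphi\mapsto\varphi\otimes\id_W$, and that it is bijective. Injectivity is easy: if $\varphi\otimes\id_W=0$ then, picking any nonzero $w\in W$, the composite $U\cong U\otimes\bK w\hookrightarrow U\otimes W\xrightarrow{\varphi\otimes\id}U'\otimes W$ is zero, which forces $\varphi=0$. So the content is surjectivity: every $\fG$-linear map $\Phi:U\otimes W\to U'\otimes W$ has the form $\varphi\otimes\id_W$ for some $\fG$-linear (equivalently, $\fG/I$-linear, since $I$ acts trivially on $U,U'$) map $\varphi:U\to U'$.

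First I would exploit the $I$-module structure. Since $I$ acts trivially on $U$ and $U'$, as $I$-modules we have $U\otimes W\cong W^{\oplus\dim U}$ and $U'\otimes W\cong W^{\oplus\dim U'}$, each being a (possibly infinite) direct sum of copies of the irreducible $I$-module $W$. Because $\End_I(W)=\bK$, an $I$-linear map between such isotypic modules is ``matrix-like'': concretely, $\Hom_I(U\otimes W,U'\otimes W)\cong\Hom_\bK(U,U')$ via $\psi\mapsto\psi\otimes\id_W$ — here the key point, needing the density hypothesis, is that any $I$-linear endomorphism of $W$ is a scalar and hence any $I$-linear map $W\to W^{\oplus n}$ lands (up to the scalar data) inside finitely many summands, so that the map $U\otimes W\to U'\otimes W$ is determined by a linear map $U\to U'$ applied ``coordinatewise.'' I would phrase this carefully: for fixed $0\ne w\in W$, restricting $\Phi$ to $U\otimes\bK w$ and composing with any coordinate projection $U'\otimes W\to U'\otimes\bK w\cong U'$ shows, using irreducibility and $\End_I(W)=\bK$ together with density (so that $I$ generates enough of $\fG$ acting on $W$ to pin down scalars on each cyclic piece), that $\Phi(u\otimes w)=\varphi(u)\otimes w$ for a well-defined linear $\varphi:U\to U'$, and then $I$-density propagates this to $\Phi(u\otimes gw)=\varphi(u)\otimes gw$ for all $g\in\fG$, hence $\Phi=\varphi\otimes\id_W$ on all of $U\otimes W$ since $W=\mathrm{span}\{gw: g\in\fG\}\cdot(\text{anything})$ — more precisely since $W$ is spanned by $Iw$ by irreducibility.

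It remains to check that this $\varphi$ is $\fG$-linear, which is where the ideal hypothesis and $\fG$-linearity of $\Phi$ combine. For $g\in\fG$, $u\in U$, $w\in W$ I would compute $\Phi(g\cdot(u\otimes w))$ two ways. On one hand $g\cdot(u\otimes w)=(gu)\otimes w+u\otimes(gw)$, so applying $\Phi=\varphi\otimes\id$ (already established) gives $\varphi(gu)\otimes w+\varphi(u)\otimes gw$. On the other hand $g\cdot\Phi(u\otimes w)=g\cdot(\varphi(u)\otimes w)=(g\varphi(u))\otimes w+\varphi(u)\otimes gw$. Comparing, $\varphi(gu)\otimes w=(g\varphi(u))\otimes w$ for all $w$, hence $\varphi(gu)=g\varphi(u)$, i.e. $\varphi\in\Hom_{\fG}(U,U')$.

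The main obstacle is the surjectivity step, and within it the point that an $I$-linear map out of the $W$-isotypic module $U\otimes W$ really is ``block scalar'' — i.e. that it restricts on each $U\otimes\bK w$ in a way governed by a single linear map $U\to U'$ independent of $w$. Establishing this cleanly is exactly where one must use that $I$ acts densely (not just that $W$ is irreducible over the possibly-huge $\fG$): density guarantees that the $I$-action already sees all the $\fG$-module endomorphisms of $W$, so $\End_I(W)=\End_{\fG}(W)=\bK$ and no new $I$-linear maps appear that a priori $\fG$-linearity of $\Phi$ would have to rule out. I would be careful to write this so that no finiteness of $\dim U$ is secretly assumed; the argument only needs that $U$ and $U'$ are arbitrary vector spaces and $\Phi$ is defined on the full tensor product.
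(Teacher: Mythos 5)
Your proposal is correct, and its first half is in essence the paper's own argument: for fixed $u$ the map $w\mapsto\Phi(u\otimes w)$ is $I$-linear because $I$ annihilates $U$, its image lies in a finite sum of copies $u_i'\otimes W$ of $W$ inside $U'\otimes W$, and composing with the projections onto those summands gives elements of $\End_I(W)=\bK$, whence $\Phi(u\otimes w)=\varphi(u)\otimes w$ for a single linear $\varphi$ and \emph{all} $w$ simultaneously. Two slips in your write-up of this step should be repaired: the ``coordinate projections'' you name, $U'\otimes W\to U'\otimes\bK w$, are not $I$-module maps ($\bK w$ is not an $I$-submodule of $W$); the $I$-linear projections one must use are onto the summands $u'\otimes W$ determined by (a basis of) $U'$, exactly as in the paper — with these the identity holds at every $w$ at once and no propagation along $Iw$ is needed. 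Likewise, density is not what makes $I$-endomorphisms of $W$ scalar: that is the hypothesis $\End_I(W)=\bK$, so your parenthetical appeals to density in this step are red herrings rather than load-bearing.

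Where you genuinely diverge from the paper is the $\fG$-equivariance of $\varphi$. The paper invokes density: given $h\in\fG$ it picks $k\in I$ with $kw=hw$, notes $(h-k)(u\otimes w)=hu\otimes w$ since $ku=0$, and applies $f$ to conclude $g(hu)\otimes w=hg(u)\otimes w$. You instead expand $\Phi(h(u\otimes w))=h\,\Phi(u\otimes w)$ directly, using the already-established identity $\Phi(u'\otimes w')=\varphi(u')\otimes w'$ for all $u',w'$ (so both $\Phi(hu\otimes w)$ and $\Phi(u\otimes hw)$ may be rewritten), cancel the common term $\varphi(u)\otimes hw$, and conclude $\varphi(hu)=h\varphi(u)$ from injectivity of $u'\mapsto u'\otimes w$ for $w\neq 0$. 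This computation is valid, and it is both simpler and slightly more general than the paper's: it shows that, once $\End_I(W)=\bK$ and the triviality of the $I$-action on $U,U'$ are in place, neither density nor even irreducibility of $W$ over $I$ is needed for this lemma, whereas the paper's argument is the one inherited from \cite[Lemma 3]{Chi14}, where density is used for exactly this equivariance step. So: same skeleton for surjectivity of $\varphi\mapsto\varphi\otimes\id_W$, a different and leaner closing step on your side — provided you fix the projection slip so that the global identity $\Phi=\varphi\otimes\id_W$ is honestly in hand before the cancellation.
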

\begin{proof}
Let $f\in\Hom_{\fG}(U\otimes W,U'\otimes W)$. We will show that there is $g\in\Hom_{\fG}(U,U')$ such that $f=g\otimes\id_W$.

Fix $u\otimes w\in U\otimes W$ and let
\begin{equation*}
  f(u\otimes w)=\sum_{i=1}^n u_i'\otimes w_i,\ u_i'\in U',\ w_i\in W. 
\end{equation*}
As $f$ is a homomorphism of $I$-modules and $I$ annihilates $U$, $f$ maps $u\otimes W\cong W$ to the direct sum 
\begin{equation*}
 \bigoplus_{i=1}^n  u_i'\otimes W\cong W^{\oplus n}.  
\end{equation*}
Furthermore, $f$ composed with the projection to a fixed direct summand of $W^{\oplus n}$ is an endomorphism of $W$ sending $u\otimes w\in u\otimes W\cong W$ to $u'_i\otimes w_i$.  But since $\End_I(W)=\bK$, each $w_i$ must be a scalar multiple $t_iw$ of $w$.  This shows that
\begin{equation*}
  g:u\mapsto \sum t_iu_i'
\end{equation*}
is a well-defined linear map $U\to U'$ with $f=g\otimes\id_W$. 

It remains to show that $g$ is a $\fG$-module homomorphism; we do this by following the part of the proof of \cite[Lemma 3]{Chi14} that uses the density condition. 

Let $h\in \fG$ be an arbitrary element, and let $k\in I$ be such that $hw = kw$ (this is possible by density). We have
\begin{equation*}
  (h-k)(u\otimes w) = hu\otimes w+u\otimes hw-u\otimes kw = hu\otimes w
\end{equation*}
because $ku=0$. This implies
\begin{equation*}
  g(hu)\otimes w = f((h-k)(u\otimes w)) =(h-k)f(u\otimes w) = (h-k)(g(u)\otimes w) = hg(u)\otimes w. 
\end{equation*}
Since $w$ is arbitrary, we get the desired conclusion $hg(u) = g(hu)$ for all $u\in U$, $h\in\fG$, i.e., $g$ is a $\fG$-module homomorphism. 
\end{proof}

We also need the following complement to \cite[Proposition 1]{Chi14}; its proof is based on some of the same ideas as in loc. cit.

\begin{proposition}\label{pr.S-W}
  The endomorphism algebra of the $\fg^M$-module $(V^*/V_*)^{\otimes m}$ is isomorphic to the group algebra $\bK\left[S_m\right]$ of the symmetric group $S_m$. 
\end{proposition}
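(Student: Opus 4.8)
The plan is to reduce the computation of $\End_{\fgl^M}\bigl((V^*/V_*)^{\otimes m}\bigr)$ to an already-understood situation by exploiting the short exact sequence $0\to V_*\to V^*\to V^*/V_*\to 0$ of $\fgl^M$-modules, together with the density/irreducibility machinery just set up. First I would recall the known facts: the socle of $V^*$ over $\fgl^M$ is $V_*$ and $V^*/V_*$ is simple, and by \cite{Chi14} the analogous endomorphism algebras for $V^{\otimes m}$ and $(V_*)^{\otimes m}$ are $\bK[S_m]$ (this is \cite[Proposition 1]{Chi14}, to which the present proposition is advertised as a complement). There is an obvious algebra map $\bK[S_m]\to\End_{\fgl^M}\bigl((V^*/V_*)^{\otimes m}\bigr)$ given by permuting tensor factors, and the whole content is to show it is an isomorphism. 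Injectivity should follow because $V^*/V_*$ is infinite-dimensional and contains, say, a copy of a large finite-dimensional space on which the $S_m$-action on $(V^*/V_*)^{\otimes m}$ restricts faithfully in the usual Schur--Weyl way; alternatively one deduces it from knowing the image already contains $m!$ linearly independent operators once surjectivity is in hand. So the real work is surjectivity.

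For surjectivity I would argue as follows. Let $\mathfrak G$ be a Lie algebra having $\fgl:=\fgl(V,V_*)$ (the finitary general linear Lie algebra, or $\fsl(V,V_*)$) as an ideal, with $\fgl^M$ playing the role of $\mathfrak G$ and $\fgl$ the role of the dense ideal $I$ in \Cref{le.hom_split}. The key point is that $\fgl$ acts densely on $V$, on $V_*$, and hence on $V^*$ and on $V^*/V_*$, and acts irreducibly on $V^*/V_*$ with $\End$ equal to $\bK$ (the latter being exactly the simplicity statement from \cite{Chi14} plus a Schur-type argument). Now $(V^*/V_*)^{\otimes m}$ is, as a $\fgl$-module, a module that should already be understood: one wants to say its $\fgl$-endomorphism algebra is $\bK[S_m]$, and then that every $\fgl^M$-endomorphism is in particular a $\fgl$-endomorphism, giving the inclusion $\End_{\fgl^M}\subseteq\End_{\fgl}=\bK[S_m]$; combined with the obvious reverse inclusion of the permutation operators this finishes the proof. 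Thus the crux is: \emph{$\End_{\fgl}\bigl((V^*/V_*)^{\otimes m}\bigr)=\bK[S_m]$.}

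To get that, I would either (i) invoke the corresponding result for the finitary Lie algebra directly — over $\fgl(V,V_*)$ the module $V^*/V_*$ is a "defining-type" simple module and one expects $(V^*/V_*)^{\otimes m}$ to have endomorphism algebra $\bK[S_m]$ by the same arguments that prove it for $V^{\otimes m}$, exhausting an appropriate finite rank subcase and passing to the limit — or (ii) mimic the proof of \cite[Proposition 1]{Chi14} verbatim, replacing $V$ by $V^*/V_*$ throughout and checking that the only properties used are simplicity of $V^*/V_*$, the density of the finitary ideal, and $\End(V^*/V_*)=\bK$, all of which are available. Approach (ii) is the safer write-up. Concretely: take $f\in\End_{\fgl^M}\bigl((V^*/V_*)^{\otimes m}\bigr)$; restrict to the ideal $\fgl$; decompose $(V^*/V_*)^{\otimes m}$ under $\fgl$ into its isotypic pieces; use density to transfer the $\fgl^M$-equivariance back and conclude $f$ commutes with the Schur--Weyl centralizer, which is $\bK[S_m]$; standard double-centralizer bookkeeping then pins $f$ down to an element of $\bK[S_m]$.

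The main obstacle I anticipate is establishing $\End_{\fgl}\bigl((V^*/V_*)^{\otimes m}\bigr)=\bK[S_m]$ cleanly — i.e., the Schur--Weyl duality statement for the infinite-dimensional simple module $V^*/V_*$ over the finitary Lie algebra. Unlike $V$ and $V_*$, the module $V^*/V_*$ is not a weight module in the usual sense, so one cannot just quote a standard reference; one has to run the density argument carefully (this is presumably why the authors package \Cref{le.hom_split} separately and say the proof "is based on some of the same ideas as in loc. cit."). A secondary, more routine obstacle is verifying that the permutation operators really are $m!$ linearly independent endomorphisms, which as usual reduces to exhibiting enough linearly independent tensors in $(V^*/V_*)^{\otimes m}$ — immediate since $\dim(V^*/V_*)=\infty$.
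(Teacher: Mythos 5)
Your reduction of surjectivity rests on a false premise. You want to take the finitary algebra $\fgl\left(V,V_*\right)$ (or $\fsl\left(V,V_*\right)$) as the dense ideal $I\subset\fgl^M$ and argue $\End_{\fgl^M}\bigl((V^*/V_*)^{\otimes m}\bigr)\subseteq\End_{\fgl}\bigl((V^*/V_*)^{\otimes m}\bigr)=\bK[S_m]$. But the finitary ideal acts by \emph{zero} on $V^*/V_*$: a finite-rank element $g=\sum_i v_i\otimes w_i^*$ with $w_i^*\in V_*$ sends any $\phi\in V^*$ to $-\sum_i\phi(v_i)w_i^*\in V_*$, so the induced action on $V^*/V_*$ is trivial (the paper itself uses exactly this fact, ``$V^*/V_*$ is a trivial $\fsl\left(V,V_*\right)$-module,'' in the proof of \Cref{cor.dist_simples}). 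Consequently $\fgl\left(V,V_*\right)$ acts neither densely nor irreducibly on $V^*/V_*$, the module is not of ``defining type'' over the finitary algebra, and $\End_{\fgl}\bigl((V^*/V_*)^{\otimes m}\bigr)$ is the full linear endomorphism algebra of the underlying vector space --- far larger than $\bK[S_m]$ --- so the inclusion $\End_{\fgl^M}\subseteq\End_{\fgl}$ yields nothing. You have also inverted the roles in \Cref{le.hom_split}: in its actual application the dense ideal $\fsl\left(V,V_*\right)$ acts on the factor $V_{\mu,\nu}$, while the $(V^*/V_*)_\lambda$ factors are precisely the modules \emph{annihilated} by the ideal. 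Your fallback (ii), ``mimic \cite[Proposition 1]{Chi14} using density of the finitary ideal,'' fails for the same reason, and that result is in any case a simplicity statement about $(V^*/V_*)_\lambda$, not a computation of $\End(V^{\otimes m})$.

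For comparison, the paper's argument avoids any dense ideal. Linear independence of the permutation operators is immediate from infinite-dimensionality, and since $\mathrm{char}\,\bK=0$ the decomposition $(V^*/V_*)^{\otimes m}\cong\bigoplus_{|\lambda|=m}(V^*/V_*)_\lambda^{\oplus\dim M_\lambda}$ together with the known simplicity of each $(V^*/V_*)_\lambda$ reduces everything to showing $(V^*/V_*)_\lambda\not\cong(V^*/V_*)_{\lambda'}$ for distinct diagrams with $m$ boxes. That nonisomorphism is proved by a concentration argument internal to $\fg^M$: partition $\bZ_{>0}$ into $m$ infinite blocks, pick a nonzero element of $(V^*/V_*)_\lambda$ with tensorands $x_i$ concentrated in distinct blocks, construct elements $p_i\in\fg^M$ supported in $I_i\times I_i$ fixing $x_i$ and acting with rank one on the relevant finite-dimensional subspace, deduce that any isomorphism $f$ carries $x$ to a combination of the same decomposable tensors, and hence restricts to a monomorphism $X_\lambda\to X_{\lambda'}$ for the $m$-dimensional span $X$ of the $x_i$, viewed as a $\fgl(m)$-module via a suitable embedding $\fgl(m)\subset\fg^M$; classical Schur--Weyl duality for $\fgl(m)$ then forces $\lambda=\lambda'$. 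Some argument of this finite-rank reduction type is what your proposal is missing.
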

\begin{proof}
  The infinite dimensionality of $V^*/V_*$ makes it clear that the different permutations from $S_m$ are linearly independent as endomorphisms of $(V^*/V_*)^{\otimes m}$, so it remains to show that there are no other $\fg^M$-endomorphisms of $(V^*/V_*)^{\otimes m}$.  For this, it suffices to show that for distinct Young diagrams $\lambda$ and $\lambda'$ with $m$ boxes the simple $\fg^M$-modules $(V^*/V_*)_\lambda$ and $(V^*/V_*)_{\lambda'}$ are nonisomorphic.

As in \cite[$\S$ 2]{Chi14}, we regard the elements of $V^*$ as $\bZ_{>0}$-indexed row vectors with the following action of $\bZ_{>0}\times \bZ_{>0}$ matrices in $\fg^M$: $gv:=-vg$.
Then $V_*$ consists of such vectors with finitely many nonzero entries. Similarly, we think of $V^*/V_*$ as consisting of row vectors defined up to changing finitely many entries. 

Partition $\bZ_{>0}$ into $m$ infinite subsets $I_j$, $1\le j\le m$, and let $W_j$ be the subspace of $V^*/V_*$ consisting of vectors that have entries zero outside $I_j$. Let 
\begin{equation*}
  (V^*/V_*)_\lambda\ni x = \sum_{\sigma\in S_m}a_\sigma \left(x_{\sigma(1)}\otimes\cdots\otimes x_{\sigma(m)}\right) 
\end{equation*}
be some nonzero element, where $x_i\in W_i$. We can find an embedding $\fgl(m)\subset \fg^M$ sending the elementary matrix with entry $1$ in position $(i,j)$ and $0$ elsewhere to an element of $\fg^M$ which maps $W_j$ onto $W_i$ isomorphically so that $x_j$ gets mapped to $x_i$, and maps $W_{j'}$ to zero for $j'\neq j$.  

Now, suppose we have an isomorphism 
\begin{equation*}
  f:(V^*/V_*)_\lambda\to (V^*/V_*)_{\lambda'}
\end{equation*}
of $\fg^M$-modules. For each $1\le i\le m$, let $p_i\in \fg^M$ a matrix with zero entries outside $I_i\times I_i\subset  \bZ_{>0} \times  \bZ_{>0} $, such that
\begin{itemize}
  \item $p_i$ preserves the finite-dimensional subspace of $V^*/V_*$ spanned by all tensorands of all decomposable tensors appearing in decompositions of $x$ and $f(x)$,
  \item the restriction of $p_i$ to this finite-dimensional subspace has rank one, and
  \item $p_i(x_i)=x_i$.   
\end{itemize}
Since $x$ is fixed by all $p_i\in \fg^M$, and $f$ is a $\fg^M$-module homomorphism, $f(x)$ is preserved by the successive application of $p_1$ up to $p_m$. In other words, $f(x)$ is itself a linear combination of the decomposable tensors $x_{\sigma(1)}\otimes\cdots\otimes x_{\sigma(m)}$. 

This then means that if $X\subset V^*/V_*$ is the $m$-dimensional space spanned by $x_i$, $1\le i\le m$, then $f$ restricts to a monomorphism $X_\lambda\to X_{\lambda'}$ of $\fgl(m)$-modules. The conclusion $\lambda=\lambda'$ follows from classical Schur-Weyl duality for $\fgl(m)$.  
\end{proof}

As a consequence, we get

\begin{corollary}\label{cor.S-W}
  The simple $\fg^M$-modules $(V^*/V_*)_\lambda$ are mutually nonisomorphic as $\lambda$ ranges over all Young diagrams. 
\end{corollary}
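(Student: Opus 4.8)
The plan is to split according to whether $|\lambda|=|\lambda'|$. When $|\lambda|=|\lambda'|=m$ there is nothing new to do: the argument carried out inside the proof of \Cref{pr.S-W} already shows that distinct Young diagrams with $m$ boxes yield non-isomorphic simple $\fg^M$-modules $(V^*/V_*)_\lambda$, by restricting to a suitable copy of $\fgl(m)\subset\fg^M$ and invoking classical Schur--Weyl duality for $\fgl(m)$. So the only genuinely new situation is $|\lambda|\neq|\lambda'|$, and for that it suffices to exhibit a module-isomorphism invariant of $(V^*/V_*)_\mu$ that records the number of boxes $|\mu|$.

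First I would note that the identity operator $\mathbb{1}\in\End_\bK(V)$ lies in $\fg^M$, since each of its rows and columns has exactly one nonzero entry, so it satisfies the Mackey finiteness condition. With the action of $\fg^M$ on row vectors given by $g\cdot v=-vg$ (the convention used in the proof of \Cref{pr.S-W}), the element $\mathbb{1}$ acts on $V^*$, and hence on the quotient $V^*/V_*$, as the scalar $-1$. Because a Lie algebra element acts on a tensor power $W^{\otimes m}$ as the sum of its actions on the individual tensor factors, $\mathbb{1}$ acts on $(V^*/V_*)^{\otimes m}$ as $-m\cdot\id$, and therefore as $-m\cdot\id$ on every subquotient of $(V^*/V_*)^{\otimes m}$, in particular on $(V^*/V_*)_\lambda$ when $m=|\lambda|$.

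Since the scalar by which a fixed algebra element acts on a module is preserved by module isomorphisms (equivalently: any $\fg^M$-module map between modules on which $\mathbb{1}$ acts by distinct scalars is zero), an isomorphism $(V^*/V_*)_\lambda\cong(V^*/V_*)_{\lambda'}$ forces $-|\lambda|=-|\lambda'|$, i.e. $|\lambda|=|\lambda'|$, and then the equal-box-number case completes the proof. I do not anticipate a real obstacle here; the only items needing verification are the trivial membership $\mathbb{1}\in\fg^M$ and the routine additivity of the $\mathbb{1}$-eigenvalue under the tensor action. It is worth recording the $\mathbb{1}$-eigenvalues $+1$, $-1$, $-1$ of $V$, $V_*$, $V^*/V_*$ respectively, as this same bookkeeping recurs when one analyzes tensor products of the three basic simple modules generating $\bT^3_{\fgl^M}$.
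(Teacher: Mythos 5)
Your proposal is correct and follows essentially the same route as the paper: the paper likewise uses the identity matrix in $\fg^M$ (acting on $(V^*/V_*)^{\otimes m}$ by a scalar recording $m$) to force $|\lambda|=|\lambda'|$, and then handles the equal-box case via the Schur--Weyl/$\fgl(m)$ argument of \Cref{pr.S-W}. The only cosmetic difference is that you invoke the argument inside the proof of \Cref{pr.S-W} directly, whereas the paper extends a putative isomorphism to an endomorphism of $(V^*/V_*)^{\otimes m}$ outside $\bK[S_m]$ and contradicts the statement of that proposition.
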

\begin{proof}
  Suppose there is an isomorphism $(V^*/V_*)_\lambda\cong (V^*/V_*)_{\lambda'}$ for Young diagrams $\lambda$ and $\lambda'$. The two modules are direct summands of $(V^*/V_*)^{\otimes|\lambda|}$ and $(V^*/V_*)^{\otimes|\lambda'|}$ respectively. Since the identity matrix (which is in $\fg^M$) acts on $(V^*/V_*)^{\otimes m}$ as multiplication by $m$, the Young diagrams $\lambda$ and $\lambda'$ must have an equal number of boxes. Then the isomorphism 
  \begin{equation*}
    (V^*/V_*)_\lambda \cong     (V^*/V_*)_{\lambda'}
  \end{equation*}
can be extended to an endomorphism of $(V^*/V_*)^{\otimes m}$ for $m=|\lambda|=|\lambda'|$ that does not belong to $\bK\left[S_m\right]$, contradicting \Cref{pr.S-W}. 
\end{proof}


\begin{corollary}\label{cor.dist_simples}
  For all ordered triples $(\lambda,\mu,\nu)$, the simple $\fg^M$-modules $ (V^*/V_*)_\lambda\otimes V_{\mu,\nu}$ are mutually nonisomorphic.
\end{corollary}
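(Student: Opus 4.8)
The plan is to peel the factor $V_{\mu,\nu}$ off both sides and reduce to \Cref{cor.S-W}, with \Cref{le.hom_split} as the main tool. The key structural input is the finitary ideal $I:=\fsl(V,V_*)\subseteq\fg^M$; it is genuinely an ideal, since the bracket of any element of $\fg^M$ with a finite-rank operator is again finite-rank and of trace zero.

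First I would record two facts about $I$. On the one hand, a finite-rank operator $\varphi$ with $\varphi^*(V_*)\subseteq V_*$ in fact satisfies $\varphi^*(V^*)\subseteq V_*$, so $I$ acts trivially on $V^*/V_*$, hence on every Schur image $(V^*/V_*)_\lambda$; consequently each $(V^*/V_*)_\lambda$ is a module over the quotient $\fg^M/I$. On the other hand, $I$ acts densely on every object of $\bT_{\fgl^M}$, and — via the identification of $\bT_{\fgl^M}$ with $\bT_{\fsl(V,V_*)}$ from \cite{DPS,PS2} — it acts irreducibly on each $V_{\mu,\nu}$ with $\End_I(V_{\mu,\nu})=\bK$, the $V_{\mu,\nu}$ remaining pairwise nonisomorphic as $I$-modules.

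Now, given a hypothetical isomorphism $(V^*/V_*)_\lambda\otimes V_{\mu,\nu}\cong(V^*/V_*)_{\lambda'}\otimes V_{\mu',\nu'}$ of $\fg^M$-modules, I would first restrict it to $I$. Since $I$ acts trivially on the first tensorand, each side is $I$-semisimple and $I$-isotypic, of type $V_{\mu,\nu}$ on the left and $V_{\mu',\nu'}$ on the right (concretely, a direct sum of $\dim_\bK(V^*/V_*)_\lambda$, resp.\ $\dim_\bK(V^*/V_*)_{\lambda'}$, copies of the relevant simple $I$-module); comparing the isomorphism types of simple $I$-submodules forces $V_{\mu,\nu}\cong V_{\mu',\nu'}$ over $I$, hence $(\mu,\nu)=(\mu',\nu')$. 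Writing $W:=V_{\mu,\nu}=V_{\mu',\nu'}$, I then apply \Cref{le.hom_split} with $\fG=\fg^M$, the ideal $I$, the $\fg^M/I$-modules $U=(V^*/V_*)_\lambda$ and $U'=(V^*/V_*)_{\lambda'}$, and the $\fg^M$-module $W$, to get
\[
  \Hom_{\fg^M}\bigl((V^*/V_*)_\lambda\otimes W,\ (V^*/V_*)_{\lambda'}\otimes W\bigr)\ \cong\ \Hom_{\fg^M}\bigl((V^*/V_*)_\lambda,\ (V^*/V_*)_{\lambda'}\bigr).
\]
The right-hand side is a space of homomorphisms between two simple $\fg^M$-modules, hence vanishes unless $(V^*/V_*)_\lambda\cong(V^*/V_*)_{\lambda'}$, i.e.\ unless $\lambda=\lambda'$ by \Cref{cor.S-W}; since the assumed isomorphism lies in the left-hand side, this yields $\lambda=\lambda'$ and finishes the argument.

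I expect the main obstacle to be not the categorical bookkeeping but the verification — or precise sourcing in \cite{DPS,PS2,Chi14} — of the facts about $I$ gathered in the second paragraph: that $I$ acts trivially on $V^*/V_*$ (this is precisely what permits the separation of variables), and that it acts densely and irreducibly, with scalar endomorphism ring, on each $V_{\mu,\nu}$, so that the hypotheses of \Cref{le.hom_split} are genuinely met. Everything else is formal.
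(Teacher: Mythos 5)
Your proposal is correct and follows essentially the same route as the paper: restrict to the ideal $\fsl(V,V_*)$ (which acts trivially on $V^*/V_*$) to force $(\mu,\nu)=(\mu',\nu')$, then apply \Cref{le.hom_split} with $W=V_{\mu,\nu}$ and conclude $\lambda=\lambda'$ from \Cref{cor.S-W}. The paper likewise takes the density/irreducibility/scalar-endomorphism hypotheses on $\fsl(V,V_*)$ acting on $V_{\mu,\nu}$ as known facts, so your argument matches the published proof in both structure and substance.
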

\begin{proof}
 Let $ (V^*/V_*)_\lambda\otimes V_{\mu,\nu}$ and $ (V^*/V_*)_{\lambda'}\otimes V_{\mu',\nu'}$ be two isomorphic simple modules.  Note that $V^*/V_*$ is a trivial $\fsl\left(V,V_*\right)$-module.  Hence, after restricting to $\fsl\left(V,V_*\right)\subset \fg^M$, the two modules become isomorphic to direct sums of copies of $V_{\mu,\nu}$ and $V_{\mu',\nu'}$ respectively. This implies $(\mu,\nu)=(\mu',\nu')$.

Next, apply \Cref{le.hom_split} to $U=(V^*/V_*)_\lambda$ and $U'=(V^*/V_*)_{\lambda'}$ with $I=\fsl\left(V,V_*\right)$. The hypotheses are easily seen to be satisfied, and the conclusion is that an isomorphism $ (V^*/V_*)_\lambda\otimes V_{\mu,\nu} \cong (V^*/V_*)_{\lambda'}\otimes V_{\mu',\nu'}$ must be of the form $g\otimes \id$ for some isomorphism $g:(V^*/V_*)_{\lambda}\to (V^*/V_*)_{\lambda'}$. The equality $\lambda=\lambda'$ now follows from \Cref{cor.S-W}.
\end{proof}

Finally, we have

\begin{theorem}\label{th.3d_simples}
The simple objects in $\bT^3_{\fgl^M}$ are (up to isomorphism) the tensor products
  \begin{equation*}
    V_{\lambda,\mu,\nu}=(V^*/V_*)_\lambda\otimes V_{\mu,\nu},
  \end{equation*}
and they are mutually nonisomorphic for different choices of ordered triples $(\lambda,\mu,\nu)$. 
\end{theorem}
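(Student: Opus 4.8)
The statement has three parts: each $V_{\lambda,\mu,\nu}$ is simple, distinct triples give non-isomorphic modules, and these exhaust all simples. The first is the theorem of \cite{Chi14} recalled above, and the second is \Cref{cor.dist_simples}, so the remaining task is exhaustiveness: every simple object of $\bT^3_{\fgl^M}$ is isomorphic to some $V_{\lambda,\mu,\nu}$. The plan is to compute the Jordan--H\"older constituents of $(V^*)^{\otimes m}\otimes V^{\otimes n}$ for all $m,n$; this suffices, since a simple object $S$ of $\bT^3_{\fgl^M}$ is a subquotient of a finite direct sum of such modules (and $\bT^3_{\fgl^M}$ has finite length by \cite{Chi14}), hence a composition factor of one of the summands.

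To analyze $(V^*)^{\otimes m}$ I would use the short exact sequence $0\to V_*\to V^*\to V^*/V_*\to 0$ (recall $\soc V^*=V_*$ with simple quotient $V^*/V_*$). Tensoring it into each of the $m$ copies of $V^*$ yields a finite filtration of $(V^*)^{\otimes m}$ whose associated graded is the direct sum, over all ways of labelling each of the $m$ tensor slots by $V_*$ or by $V^*/V_*$, of the corresponding tensor product; since permuting tensor factors changes a $\fgl^M$-module only up to isomorphism, each graded piece is isomorphic to $(V^*/V_*)^{\otimes a}\otimes V_*^{\otimes b}$ with $a+b=m$. Hence every composition factor of $(V^*)^{\otimes m}\otimes V^{\otimes n}$ is a composition factor of some $(V^*/V_*)^{\otimes a}\otimes V_*^{\otimes b}\otimes V^{\otimes n}$. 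Next I would invoke \Cref{pr.S-W}: the natural map $\bK[S_a]\to\End_{\fgl^M}\big((V^*/V_*)^{\otimes a}\big)$ is an isomorphism, so splitting $1\in\bK[S_a]$ into central primitive idempotents and using the simplicity of the Schur images $(V^*/V_*)_\lambda$ (from \cite{Chi14}) produces a $\fgl^M$-module decomposition $(V^*/V_*)^{\otimes a}\cong\bigoplus_{|\lambda|=a}(V^*/V_*)_\lambda^{\oplus d_\lambda}$ for suitable multiplicities $d_\lambda$; in particular this module is semisimple. Thus $S$ is a composition factor of $(V^*/V_*)_\lambda\otimes\big(V_*^{\otimes b}\otimes V^{\otimes n}\big)$ for some Young diagram $\lambda$.

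Finally, $V_*^{\otimes b}\otimes V^{\otimes n}$ is an object of $\bT_{\fgl^M}=\bT^2_{\fgl^M}$, hence of finite length with all composition factors among the $V_{\mu,\nu}$. Tensoring over $\bK$ with the fixed module $(V^*/V_*)_\lambda$ is exact, so the composition factors of $(V^*/V_*)_\lambda\otimes(V_*^{\otimes b}\otimes V^{\otimes n})$ are precisely the composition factors of the modules $(V^*/V_*)_\lambda\otimes V_{\mu,\nu}$, with $V_{\mu,\nu}$ ranging over the constituents of $V_*^{\otimes b}\otimes V^{\otimes n}$; but each such tensor product is the simple module $V_{\lambda,\mu,\nu}$ by \cite{Chi14}. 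Therefore $S\cong V_{\lambda,\mu,\nu}$ for some triple $(\lambda,\mu,\nu)$, which finishes the argument.

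The one step that requires real input rather than bookkeeping is the passage from the endomorphism-ring identity of \Cref{pr.S-W} to an honest direct-sum decomposition of $(V^*/V_*)^{\otimes a}$: one runs the standard double centralizer argument, with the central idempotents of $\bK[S_a]$ cutting out the isotypic pieces and the primitive idempotents realizing the $(V^*/V_*)_\lambda$ as images of Young symmetrizers, and one must check it works verbatim for this infinite-dimensional module, relying only on the already known simplicity of each $(V^*/V_*)_\lambda$. The rest --- picking out a summand, constructing and refining the filtration of $(V^*)^{\otimes m}$, reordering tensor factors, exactness of $\otimes_\bK$, and the known structure of $\bT_{\fgl^M}$ --- is routine.
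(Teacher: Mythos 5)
Your proposal is correct and follows essentially the same route as the paper: the paper likewise gets simplicity from \cite{Chi14}, nonisomorphy from \Cref{cor.dist_simples}, and exhaustion from the fact that each $(V^*)^{\otimes m}\otimes V^{\otimes n}$ has a finite filtration with subquotients $V_{\lambda,\mu,\nu}$, which you merely spell out (via the sequence $0\to V_*\to V^*\to V^*/V_*\to 0$ and the $S_a$-idempotent decomposition of $(V^*/V_*)^{\otimes a}$, the latter needing only characteristic zero rather than the full force of \Cref{pr.S-W}).
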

\begin{proof}
  We have already proved that the objects in the statement are mutually nonisomorphic. The fact that they exhaust the simple objects of $\bT^3_{\fgl^M}$ follows from the definition of this category and the fact that every object $(V^*)^{\otimes m}\otimes V^{\otimes n}$ has a finite filtration whose successive quotients are isomorphic to $V_{\lambda,\mu,\nu}$ for various $\lambda$, $\mu$ and $\nu$. 
\end{proof}

\subsection{Indecomposable injectives}\label{subse.inj_soc}

\Cref{cor.indinjobj} provides a description of the indecomposable injective objects of the category $\bT^3_{\fgl^M}$.   In order to be able to apply it, we first introduce the Grothendieck closure $\overline{\bT}^3_{\fgl^M}$ of $\bT^3_{\fgl^M}$.  By definition, $\overline{\bT}^3_{\fgl^M}$ is the smallest thick, exact Grothendieck subcategory of $\fgl^M\text{-}\mathrm{Mod}$ containing the tensor products of copies of $V$, $V_*$ and $V^*$.  The category $\ol{\bT}^3_{\fgl^M}$ can be made into an ordered Grothendieck category, with $X_i$ and $\Theta_i$ defined as follows.  

The objects $X_i$ are of the form $(V^*/V_*)^{\otimes m}\otimes (V^*)^{\otimes n}\otimes V^{\otimes p}$, where the indexing set $I\ni i$ consists of ordered triples $(m,n,p)$ of nonnegative integers. The morphisms in $\Theta_i$ are the various contractions 
\begin{equation}\label{eq:short1}
(V^*/V_*)^{\otimes m}\otimes (V^*)^{\otimes n}\otimes V^{\otimes p}\to (V^*/V_*)^{\otimes m}\otimes (V^*)^{\otimes (n-1)}\otimes V^{\otimes (p-1)}  
\end{equation}
and all maps
\begin{equation}\label{eq:short2}
(V^*/V_*)^{\otimes m}\otimes (V^*)^{\otimes n}\otimes V^{\otimes p}\to (V^*/V_*)^{\otimes (m+1)}\otimes (V^*)^{\otimes (n-1)}\otimes V^{\otimes p}  
\end{equation}
that send one $V^*$-tensorand onto one $V^*/V_*$-tensorand. Finally, in the partial order on $I$, a triple $(m,n,p)$ is smaller than $(m',n',p')$ if 
\begin{equation}\label{eq:order}
m\ge m',\quad m+n\le m'+n',\quad  n\le n',\quad p\le p',\quad \text{and}\quad m+n-p=m'+n'-p'   
\end{equation}
 (note that the first inequality is reversed). Clearly, this partial order on $I$ satisfies the finiteness condition in \Cref{se.inj} (every element dominates finitely many others). It is routine to check that conditions (1) -- (4) from \Cref{se.inj} hold in this case (\Cref{cor.dist_simples} is needed for condition (3)). Moreover, we have the following observation.

\begin{lemma}\label{le.short}
  The morphisms in the sets $\Theta_i$ described above are short in the sense of \Cref{def.short}.
\end{lemma}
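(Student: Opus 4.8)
The plan is to unwind Definition~\ref{def.short} directly against the explicit partial order \eqref{eq:order} on $I$. Recall that, since $d(j,i)$ counts links in a \emph{longest} chain from $j$ to $i$, a morphism $f\colon X_i\to X_j$ is short exactly when $j<i$ holds and there is no index $k\in I$ with $j<k<i$. So for each of the two families of morphisms in $\Theta_i$ I would carry out two checks: first that the target index is strictly below the source index, and then that no index sits strictly between them.

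For a contraction \eqref{eq:short1} the source is $i=(m,n,p)$ and the target is $j=(m,n-1,p-1)$. Substituting into the five clauses of \eqref{eq:order} one reads off at once that $j\le i$: the first coordinates agree, $m+n$ drops by $1$, the second and third coordinates each drop by $1$, and the invariant $m+n-p$ is preserved; since the second coordinate strictly decreases, $j<i$. For a map \eqref{eq:short2} the source is again $i=(m,n,p)$ and the target is $j'=(m+1,n-1,p)$; now the first coordinate strictly increases, $m+n$ is unchanged, the second coordinate drops by $1$, the third is unchanged, and $m+n-p$ is again preserved, so $j'<i$.

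It remains to rule out an intermediate index. Suppose $j<k<i$ with $k=(a,b,c)$. Combining the first-coordinate clauses of $j<k$ and $k<i$ (keeping in mind that the first inequality in \eqref{eq:order} is the reversed one) forces $a=m$; the second-coordinate clauses squeeze $b$ into $\{n-1,n\}$ and the third-coordinate clauses squeeze $c$ into $\{p-1,p\}$; finally the conserved quantity $a+b-c=m+n-p$ determines $c$ from $b$, and the two surviving cases give $k=i$ or $k=j$, both forbidden. The analysis for \eqref{eq:short2} is parallel: there the third-coordinate clauses force $c=p$, the second-coordinate clauses force $a+b=m+n$, the second coordinate again lies in $\{n-1,n\}$, and the two cases yield $k=i$ or $k=j'$. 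I do not expect a genuine obstacle in this argument; the only point requiring care is the orientation of the first clause of \eqref{eq:order} (a larger index has a \emph{smaller} first coordinate), which is precisely what makes the squeezing arguments collapse to the two trivial cases.
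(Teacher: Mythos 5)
Your proposal is correct and follows essentially the same route as the paper: a direct squeeze argument against the explicit order \Cref{eq:order}, showing that any $(a,b,c)$ between the target and source indices must coincide with one of them (the paper writes out only the contraction case \Cref{eq:short1} and declares the case \Cref{eq:short2} analogous, which you carry out explicitly). The extra verification that the target index is strictly below the source is a harmless addition the paper leaves implicit.
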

\begin{proof}
For morphisms of the type displayed in \Cref{eq:short1} this amounts to showing that no element of $I$ can be larger than $(m,n-1,p-1)$ and smaller than $(m,n,p)$. For those in \Cref{eq:short2}, on the other hand, we must prove that no element of $I$ is larger than $(m+1,n-1,p)$ and smaller than $(m,n,p)$. We will carry out the first of these two tasks, the other one being entirely analogous. 

Suppose we have
\begin{equation}\label{eq:chain}
  (m,n-1,p-1)\le (a,b,c)\le (m,n,p)
\end{equation}
in our partially ordered set $I$ for some triple $(a,b,c)$ of nonnegative integers. Then, first off, we must have $a=m$. Secondly, $b$ equals either $n$ or $n-1$. In the first case, the condition $a+b-c=m+n-p$ from \Cref{eq:order} forces $c=p$. In the other case, it forces $c=p-1$. Either way, $(a,b,c)$ must be one of the outer ordered triples in \Cref{eq:chain}.  
\end{proof}

\begin{proposition}
	The inclusion functor $\bT^3_{\fgl^M}\subset \ol{\bT}^3_{\fgl^M}$ identifies $\bT^3_{\fgl^M}$ with $\left(\ol{\bT}^3_{\fgl^M}\right)_{\fin}$.
\end{proposition}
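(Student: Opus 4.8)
The plan is to unwind the two different descriptions of the categories in question and show they produce the same class of objects. By the corollary immediately following \Cref{th.comod}, $\left(\ol{\bT}^3_{\fgl^M}\right)_{\fin}$ is exactly the full subcategory of finite-length objects of the ordered Grothendieck category $\ol{\bT}^3_{\fgl^M}$, while by definition $\bT^3_{\fgl^M}$ consists of the subquotients of finite direct sums of the modules $(V^*)^{\otimes m}\otimes V^{\otimes n}$. Since the inclusion $\bT^3_{\fgl^M}\subset\ol{\bT}^3_{\fgl^M}$ is fully faithful by construction, it suffices to check that these two classes of objects coincide.

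First I would record the following reformulation of ``finite length'' inside $\ol{\bT}^3_{\fgl^M}$: an object has finite length if and only if it is a subquotient of a finite direct sum of the $X_i=(V^*/V_*)^{\otimes m}\otimes(V^*)^{\otimes n}\otimes V^{\otimes p}$. One implication is the remark following the definition of ordered Grothendieck category (conditions (1)--(4) force the $X_i$, and hence subquotients of their finite direct sums, to have finite length); for the converse one invokes condition~(1), which presents any object of $\ol{\bT}^3_{\fgl^M}$ as a sum of subquotients of finite direct sums of the $X_i$, and then observes that finiteness of length collapses this sum to a finite one, exhibiting the object as a subquotient of a single finite direct sum of $X_i$'s.

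Second, I would compare the two generating families. On one hand, every module $(V^*)^{\otimes a}\otimes V^{\otimes b}$ is itself one of the $X_i$ (the one with no $V^*/V_*$ tensorands), so every object of $\bT^3_{\fgl^M}$ is a subquotient of a finite direct sum of $X_i$'s and therefore of finite length; this gives $\bT^3_{\fgl^M}\subseteq\left(\ol{\bT}^3_{\fgl^M}\right)_{\fin}$. On the other hand, tensoring the canonical epimorphism $(V^*)^{\otimes m}\twoheadrightarrow(V^*/V_*)^{\otimes m}$ with the identity of $(V^*)^{\otimes n}\otimes V^{\otimes p}$ and using exactness of the tensor functors, each $X_i$ is a quotient of $(V^*)^{\otimes(m+n)}\otimes V^{\otimes p}$; hence a subquotient of a finite direct sum of $X_i$'s is again a subquotient of a finite direct sum of modules $(V^*)^{\otimes a}\otimes V^{\otimes b}$, i.e. an object of $\bT^3_{\fgl^M}$. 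Combining this with the reformulation from the first step yields the reverse inclusion, and thus the equality.

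I do not expect a genuine obstacle here; the only steps needing a little care are the ``finite length $\Rightarrow$ subquotient of a finite direct sum of $X_i$'' direction, which rests on condition~(1) together with the collapsing-of-sums argument, and the routine bookkeeping that a subquotient of a finite direct sum of quotients of the $(V^*)^{\otimes a}\otimes V^{\otimes b}$ is the same thing as a subquotient of a finite direct sum of the $(V^*)^{\otimes a}\otimes V^{\otimes b}$ themselves.
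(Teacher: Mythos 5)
Your proof is correct, and at its core it performs the same identification as the paper, just with more scaffolding. The paper's own proof is a one-liner: it appeals directly to the definition of $\cC_{\fin}$ as the full, thick abelian subcategory generated by the $X_i$ and asserts that this coincides with the definition of $\bT^3_{\fgl^M}$ from \Cref{se.prel}. The substantive point hidden in that word ``coincides'' is precisely your generator comparison: each $(V^*)^{\otimes a}\otimes V^{\otimes b}$ is the object $X_{(0,a,b)}$, and each $X_{(m,n,p)}$ is a quotient of $(V^*)^{\otimes(m+n)}\otimes V^{\otimes p}$ via the surjection $V^*\to V^*/V_*$, so the two classes of subquotients of finite direct sums agree; making this explicit is a genuine improvement in completeness over the paper's phrasing. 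Your additional detour through the corollary to \Cref{th.comod} (``$\cC_{\fin}$ = finite-length objects'') together with the collapsing argument from condition (1) is correct but not needed: the subquotient description of $\cC_{\fin}$ is already available from its definition (and is recorded in the proof of \Cref{th.comod}), so you could go straight from ``thick abelian subcategory generated by the $X_i$ = subquotients of finite direct sums of the $X_i$'' to the generator comparison. Both routes are sound; yours trades brevity for an explicit verification of the step the paper treats as immediate.
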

\begin{proof}
  This follows immediately from the definition of $\cC_{\fin}$ for an ordered Grothendieck category $\cC$: it is simply the full, thick abelian subcategory on the objects $X_i$.  In the case of $\ol{\bT}_{\fgl^M}^3$, $X_i=(V^*/V_*)^{\otimes m}\otimes (V^*)^{\otimes n}\otimes V^{\otimes p}$ for $i=(m,n,p)$, and the definition of $\left(\ol{\bT}^3_{\fgl^M}\right)_{\fin}$ coincides with the definition of $\bT^3_{\fgl^M}$ given in \Cref{se.prel}. 
\end{proof}

Now let $\sym$ be the ring of symmetric functions on countably many variables (see e.g. \cite{sym}, which will be our reference for symmetric functions). The ring $\sym$ is well known to be a Hopf algebra (over $\bZ$), and we will use its distinguished $\bZ$-basis $\{\lambda\}$ of Schur functions labeled by Young diagrams.

As in \cite{Chi14}, denote by $\lambda\mapsto \lambda_{(1)}\otimes\lambda_{(2)}$ the comultiplication on $\sym$. 
The tensor $\lambda_{(1)}\otimes\lambda_{(2)}$ is not a decomposable tensor, but we suppress the summation symbol as is customary in the Hopf algebra literature (using so-called Sweedler notation; cf. \cite{Sweedler}). Similarly, we write $\lambda\mapsto \lambda_{(1)}\otimes \lambda_{(2)}\otimes \lambda_{(3)}$ for $(\Delta\otimes\id)\circ\Delta$, etc.

For a generic tensor $\sum_i \mu_i\otimes \nu_i$ in the tensor square $\cat{Sym}^{\otimes 2}$ and nonnegative integers $k$, $\ell$, let $\sum_i\mu^k_i\otimes\nu^\ell_i$ denote the sum of only those summands $\mu_i\otimes\nu_i$ for which $\mu_i$ and $\nu_i$ have $k$ and $\ell$ boxes respectively. 

We write
\begin{equation*}
  0=\soc^0M\subset\soc M=\soc^1M\subset\soc^2M \subset\cdots
\end{equation*}
for the socle filtration of a finite-length object $M$, and denote the semisimple quotient $\soc^i/\soc^{i-1}$ by $\usoc^i$. 

Finally, let $W_{\mu,\nu}$ be the simple object in $\bT_{\End(V)}^2$ corresponding to the Young diagrams $\mu,\nu$. According to \cite[Corollary 5]{Chi14}, the socle filtration of $W_{\mu,\nu}$ over $\fgl^M$ can be written as follows:
\begin{equation*}
  \usoc^k(W_{\mu,\nu}) \cong (V^*/V_*)_{\mu_{(1)}^{k-1}}\otimes V_{\mu_{(2)}^{|\mu|-k+1},\nu}.
\end{equation*}
The proof of \cite[Corollary 5]{Chi14} can be adapted to obtain the socle filtration of $(V^*/V_*)_\lambda\otimes W_{\mu,\nu}$ in $\bT^3_{\fgl^M}$: it is obtained from that of $W_{\mu,\nu}$ by simply tensoring with $(V^*/V_*)_\lambda$. In conclusion, if 
\begin{equation*}
  M=(V^*/V_*)_\lambda\otimes W_{\mu,\nu}
\end{equation*}
then the semisimple subquotient $\usoc^k M$ is isomorphic to
\begin{equation}\label{eq:desc}
  (V^*/V_*)_\lambda\otimes (V^*/V_*)_{\mu_{(1)}^{k-1}}\otimes V_{\mu_{(2)}^{|\mu|-k+1},\nu}.
\end{equation}

We end this section with a description of the indecomposable injective objects in $\bT^3_{\fgl^M}$ and of their socle filtrations. This will be needed below, when we describe the higher ext-groups in $\bT^3_{\fgl^M}$.

\begin{theorem}\label{th.soc_filt}
  The indecomposable injectives in the category $\bT^3_{\fgl^M}$ are 
  \begin{equation*}
    I_{\lambda,\mu,\nu}=(V^*/V_*)_\lambda\otimes (V^*)_{\mu}\otimes V_{\nu},
  \end{equation*}
with respective socles $V_{\lambda,\mu,\nu} = (V^*/V_*)_\lambda\otimes V_{\mu,\nu}$. 

Moreover, the subquotient $\usoc^k(I_{\lambda,\mu,\nu})$ is  isomorphic to
  \begin{equation*}
    \bigoplus_{\ell+r=k-1}\left((V^*/V_*)_\lambda\otimes (V^*/V_*)_{\mu_{(1)}^\ell}\otimes V_{\mu_{(2)}^{|\mu|-k+1},\nu_{(1)}^{|\nu|-r}}\right)^{\oplus \langle \mu_{(3)},\nu_{(2)}\rangle},
  \end{equation*}
where $\langle-,-\rangle:\cat{Sym}\otimes \cat{Sym}\to \bC$ is the pairing making the basis $\{\lambda\}$ orthonormal.  
\end{theorem}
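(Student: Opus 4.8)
The plan is to identify $I_{\lambda,\mu,\nu}=(V^*/V_*)_\lambda\otimes(V^*)_\mu\otimes V_\nu$ as an indecomposable injective with socle $V_{\lambda,\mu,\nu}$ via \Cref{cor.indinjobj}, and then to compute its socle filtration by decomposing it as a tensor product of modules whose socle filtrations are already known. First I would observe that $(V^*)_\mu\otimes V_\nu$ is (a direct summand of) $(V^*)^{\otimes|\mu|}\otimes V^{\otimes|\nu|}$, so it lies in $\bT^2_{\End_\bK(V)}$; by classical $\fgl$-type Schur--Weyl considerations together with the fact that $V^*\otimes V$ decomposes over $\End_\bK(V)$ into a trivial summand plus an $\fsl$-type piece, one shows that $(V^*)_\mu\otimes V_\nu$ is a finite direct sum of the $W_{\mu',\nu'}$ with multiplicities governed by the comultiplication in $\sym$. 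Concretely, using that $(V^*)^{\otimes m}\cong\bigoplus_\mu (V^*)_\mu^{\oplus f^\mu}$ and tracking contractions, one gets $(V^*)_\mu\otimes V_\nu\cong\bigoplus W_{\mu_{(2)},\nu_{(2)}}^{\oplus\langle\mu_{(1)},\nu_{(1)}\rangle}$ (the pairing $\langle\mu_{(1)},\nu_{(1)}\rangle$ counting matchings of the contracted boxes). Since $(V^*/V_*)_\lambda$ is simple and $\fsl(V,V_*)$ acts trivially on it, tensoring by it is exact and commutes with the socle filtration in the relevant sense, so it suffices to handle $(V^*)_\mu\otimes V_\nu$.

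Next I would establish that $(V^*/V_*)_\lambda\otimes(V^*)_\mu\otimes V_\nu$ is indecomposable injective in $\bT^3_{\fgl^M}$: indecomposability follows because its socle, computed below, is the single simple $V_{\lambda,\mu,\nu}$; injectivity follows from \Cref{cor.indinjobj} once we check that this module is an indecomposable summand of some $X_i=(V^*/V_*)^{\otimes m}\otimes(V^*)^{\otimes n}\otimes V^{\otimes p}$, which it visibly is (take $m=|\lambda|$, $n=|\mu|$, $p=|\nu|$ and project onto the appropriate Schur components). For the socle filtration, I would combine the decomposition from the first paragraph with formula \Cref{eq:desc}: for each summand $W_{\mu',\nu'}$ appearing with multiplicity $\langle\mu_{(1)},\nu_{(1)}\rangle$ (where $\mu'=\mu_{(2)}$, $\nu'=\nu_{(2)}$), the module $(V^*/V_*)_\lambda\otimes W_{\mu',\nu'}$ has $\usoc^k$ equal to $(V^*/V_*)_\lambda\otimes(V^*/V_*)_{(\mu')_{(1)}^{k-1}}\otimes V_{(\mu')_{(2)}^{|\mu'|-k+1},\nu'}$. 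Reassembling over all summands, re-expanding the iterated comultiplications via coassociativity, and collecting the boxes removed by contractions (contributing the index $r$ with $\ell+r=k-1$, since a box removed from $V^*$ before passing to $V^*/V_*$ and a box removed from $V_\nu$ both lower the socle degree), one arrives at
\[
  \usoc^k(I_{\lambda,\mu,\nu})\cong\bigoplus_{\ell+r=k-1}\left((V^*/V_*)_\lambda\otimes(V^*/V_*)_{\mu_{(1)}^\ell}\otimes V_{\mu_{(2)}^{|\mu|-k+1},\nu_{(1)}^{|\nu|-r}}\right)^{\oplus\langle\mu_{(3)},\nu_{(2)}\rangle}.
\]

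The main obstacle is the bookkeeping that shows the three separate combinatorial operations—Schur-decomposing $(V^*)_\mu\otimes V_\nu$ into $W$'s (which consumes a matching $\langle\mu_{(1)},\nu_{(1)}\rangle$ of boxes), then applying the socle formula for $(V^*/V_*)_\lambda\otimes W_{\mu_{(2)},\nu_{(2)}}$ (which splits $\mu_{(2)}$ into $(\mu_{(2)})_{(1)}\otimes(\mu_{(2)})_{(2)}$), and finally re-merging—collapse cleanly, after invoking coassociativity of $\Delta$ on $\sym$, into the single three-fold coproduct $\mu\mapsto\mu_{(1)}\otimes\mu_{(2)}\otimes\mu_{(3)}$ with $|\mu_{(1)}|=\ell$, $|\mu_{(2)}|=|\mu|-k+1$, $|\mu_{(3)}|=r$ paired against $\nu\mapsto\nu_{(1)}\otimes\nu_{(2)}$ with $|\nu_{(2)}|=r$. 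I would verify this by a direct character/multiplicity computation in the Grothendieck ring, checking that the total multiplicity of each $V_{\lambda,\alpha,\beta,\gamma}$-type constituent in degree $k$ matches on both sides; the identity $\sum_{|\mu_{(1)}|+|\mu_{(3)}|=k-1}\langle\mu_{(3)},\nu_{(2)}\rangle\cdot(\text{rest})$ being exactly what the iterated comultiplication produces. Everything else is a routine consequence of exactness of $(V^*/V_*)_\lambda\otimes(-)$ and \Cref{cor.indinjobj}.
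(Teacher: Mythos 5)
The central step of your plan fails: $(V^*)_\mu\otimes V_\nu$ is \emph{not} a finite direct sum of simples $W_{\mu',\nu'}$ in $\bT^2_{\End_\bK(V)}$. That category is not semisimple; already $V^*\otimes V$ is a nonsplit extension of the trivial module by its simple socle $W_{(1),(1)}$ (the contraction $V^*\otimes V\to\bK$ does not split), so your claim that it decomposes into a trivial summand plus an ``$\fsl$-type piece'' is false. In general $(V^*)_\mu\otimes V_\nu$ is an \emph{indecomposable injective} of $\bT^2_{\End_\bK(V)}$ with simple socle $W_{\mu,\nu}$; the formula you write, with multiplicities $\langle\mu_{(1)},\nu_{(1)}\rangle$, is valid only in the Grothendieck group, i.e.\ as the associated graded of its socle filtration (this is \cite[Theorem 2.3]{PS}). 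Note also that your decomposition is inconsistent with your own first step: if it held, $I_{\lambda,\mu,\nu}$ would split accordingly and its socle would be a direct sum of several simples, contradicting the indecomposability and the simplicity of the socle that you invoke.

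Because of this, the ``reassembling'' paragraph only computes Jordan--H\"older multiplicities, and a character computation in the Grothendieck ring cannot tell you in \emph{which} socle layer each constituent sits --- which is precisely the content of \Cref{th.soc_filt}. The paper's proof repairs exactly this: it splices the socle filtration of $(V^*)_\mu\otimes V_\nu$ over $\End_\bK(V)$ (from \cite[Theorem 2.3]{PS}) with the $\fgl^M$-socle filtrations \Cref{eq:desc} of the layers $(V^*/V_*)_\lambda\otimes W_{\mu',\nu'}$, obtaining a filtration with the asserted semisimple layers, and then uses the $\Ext^1$ constraint (via \Cref{th.Kosz}: extensions between simples force the middle diagrams' sizes to differ by exactly $1$, and all constituents of the $k$-th layer have middle diagram of size $|\mu|-k$) to conclude that no constituent can occur earlier or later, so the spliced filtration \emph{is} the socle filtration. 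Your identification of the $I_{\lambda,\mu,\nu}$ as indecomposable injectives via \Cref{pr.inj_env} and \Cref{cor.indinjobj}, and your use of \Cref{eq:desc}, do agree with the paper, but without replacing the false direct-sum decomposition by the filtration argument and adding the Ext-vanishing step, the proof of the socle-filtration statement does not go through.
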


Before going into the proof, let us rephrase the statement so as to make it more explicit. Denoting by $N_{\lambda,\mu}^\nu$ the usual Littlewood-Richardson coefficients (i.e. the multiplicity of $\nu$ in the product $\lambda\mu$ in $\sym$), we have

\begin{thbis}
  The indecomposable injectives in the category $\bT^3_{\fgl^M}$ are (up to isomorphism)
  \begin{equation*}
    I_{\lambda,\mu,\nu}=(V^*/V_*)_\lambda\otimes (V^*)_{\mu}\otimes V_{\nu},
  \end{equation*}
with respective socles $V_{\lambda,\mu,\nu} = (V^*/V_*)_\lambda\otimes V_{\mu,\nu}$. 

    The subquotient $\usoc^k(I_{\lambda,\mu,\nu})$ is  isomorphic to
  \begin{equation}\label{eq.LR}
    \bigoplus_{\ell+r=k-1}\bigoplus_{|\alpha|=\ell}\bigoplus_{|\delta|=r}\bigoplus_\zeta N_{\lambda,\alpha}^\zeta N_{\alpha,\beta}^{\mu}N_{\gamma,\delta}^{\beta}N_{\phi,\delta}^{\nu}~ (V^*/V_*)_{\zeta}\otimes V_{\gamma,\phi},
  \end{equation}
where repeated indices are summed over even if they do not appear under summation signs. 
\end{thbis}
\begin{proof}
Note first that once we prove the claims about the socle filtrations, the fact that the objects $I_{\lambda,\mu,\nu}$ are all (up to isomorphism) indecomposable injectives in $\bT^3_{\fgl^M}$ follows as a direct application of \Cref{pr.inj_env} to $\cC=\ol{\bT}^3_{\fgl^M}$ as explained at the beginning of \Cref{subse.inj_soc}. 

So we are left having to prove the claimed description of the socle filtration of $I_{\lambda,\mu,\nu}$. That the two formulations for it are equivalent is immediate from the definition of the Littlewood-Richardson coefficients in terms of the multiplication on $\sym$ and fact that the pairing $\langle\bullet,\bullet\rangle$ from \Cref{th.soc_filt} is a Hopf pairing, in the sense that 
  \begin{equation*}
    \langle \lambda,\nu_{(1)}\rangle\cdot\langle \mu,\nu_{(2)}\rangle = \langle \lambda\mu,\nu\rangle
  \end{equation*}
for all Young diagrams $\lambda$, $\mu$ and $\nu$. 

The fact that a Jordan-H\"older series for $I_{\lambda,\mu,\nu}$ has the subquotients displayed in \Cref{eq.LR} follows by splicing together the socle filtrations of the objects $(V^*/V_*)_\lambda\otimes W_{\mu,\nu}$ described above (see \Cref{eq:desc}) and the socle filtrations of the indecomposable injectives in $\bT^2_{\End(V)}$ obtained in \cite[Theorem 2.3]{PS}.

Note that for all simple objects in \Cref{eq.LR} (that actually appear
with nonzero multiplicity) we have $|\gamma|=|\mu|-k$. That no such
simple object can appear either later or earlier in the socle
filtration (i.e. in $\usoc^\ell$ for $\ell\ne k$) follows from the fact that 
\begin{equation*}
  \Ext^1((V^*/V_*)_{\zeta}\otimes V_{\gamma,\phi},(V^*/V_*)_{\zeta'}\otimes V_{\gamma',\phi'})
\end{equation*}
cannot be nonzero unless $|\gamma'|-|\gamma|=1$ (see \Cref{th.Kosz} below). 
\end{proof}

\subsection{Blocks}\label{subse.3d_blocks}

We first make our terminology precise following e.g. \cite[$\S$ 5.1]{ComOst11}:

\begin{definition}\label{def.blocks}
  The \define{blocks} of an abelian category are the classes of the weakest equivalence relation on the set of indecomposable objects generated by requiring that two objects are equivalent if there are nonzero morphisms between them. 
\end{definition}

In this short subsection we classify the blocks of the category $\bT^3_{\fgl^M}$.

\begin{theorem}\label{th.blocks}
  The blocks of $\bT^3_{\fgl^M}$ are indexed by the integers, with the indecomposable injective object $I_{\lambda,\mu,\nu}$ being contained in the block of index $|\lambda|+|\mu|-|\nu|$. 
\end{theorem}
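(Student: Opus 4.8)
The statement has two halves: first, that the integer $n(\lambda,\mu,\nu):=|\lambda|+|\mu|-|\nu|$ is a block invariant (i.e.\ it is constant on blocks), and second, that each value of $n\in\bZ$ is actually attained by a single block, so that the map $I_{\lambda,\mu,\nu}\mapsto n(\lambda,\mu,\nu)$ descends to a bijection between blocks and $\bZ$. For the first half, the plan is to observe that it suffices to show $n$ is unchanged along the generators of the block equivalence relation, i.e.\ whenever there is a nonzero morphism between two indecomposable objects. Since every indecomposable object has a simple socle and a simple head, and morphisms between indecomposables induce nonzero maps at the level of composition factors, it is enough to check the invariance of $n$ whenever $\Ext^1$ between two simple objects $V_{\lambda,\mu,\nu}$ and $V_{\lambda',\mu',\nu'}$ is nonzero (a nonzero map $I_{\lambda,\mu,\nu}\to I_{\lambda',\mu',\nu'}$ forces a common composition factor, and any two composition factors of a single indecomposable injective are linked by a chain of $\Ext^1$'s because the socle filtration is "connected" in the appropriate sense). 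By \Cref{th.soc_filt} bis (and the remark at the end of its proof invoking \Cref{th.Kosz}), a nonzero $\Ext^1((V^*/V_*)_\zeta\otimes V_{\gamma,\phi},(V^*/V_*)_{\zeta'}\otimes V_{\gamma',\phi'})$ forces the composition factors to come from consecutive layers $\usoc^k$ and $\usoc^{k+1}$ of one injective, and reading off the Littlewood--Richardson bookkeeping in \Cref{eq.LR} one sees precisely how $\zeta,\gamma,\phi$ change: passing from layer $k$ to layer $k+1$ one box migrates — either $|\zeta|$ increases by one while $|\gamma|$ decreases by one (coming from the $(V^*/V_*)_\lambda$ part), or $|\phi|$ increases by one while $|\gamma|$ decreases by one (coming from the $V$-part absorbed into the dual). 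In the first case $|\zeta|+|\gamma|-|\phi|$ is unchanged; in the second case it decreases by $2$... so one must be careful: what is actually invariant is read off from the relation $m+n-p=m'+n'-p'$ defining the order on $I$ in \Cref{eq:order}. Translating: for the simple $V_{\lambda,\mu,\nu}=(V^*/V_*)_\lambda\otimes V_{\mu,\nu}$ sitting inside $(V^*/V_*)^{\otimes|\lambda|}\otimes(V^*)^{\otimes?}\otimes V^{\otimes?}$, the relevant conserved quantity is exactly $|\lambda|+|\mu|-|\nu|$, because the short morphisms in $\Theta_i$ (contractions of type \Cref{eq:short1} and the $V^*\to V^*/V_*$ collapses of type \Cref{eq:short2}) each preserve $m+n-p$ by construction. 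So the first half reduces to: every nonzero $\Ext^1$ between simples is "implemented" by one of these short generators, whence $n$ is preserved; this is guaranteed by \Cref{le.ext1} together with \Cref{le.short}.

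For the second half I would argue attainment and non-degeneracy separately. Attainment of every integer $n$ is easy: for $n\ge 0$ take $I_{(n),\emptyset,\emptyset}$ (i.e.\ $\lambda$ a single row of $n$ boxes, $\mu=\nu=\emptyset$), which has $n(\lambda,\mu,\nu)=n$; for $n<0$ take $I_{\emptyset,\emptyset,(-n)}$, giving $n(\lambda,\mu,\nu)=-(-n)^{\vphantom{x}}$, wait — $|\nu|=-n$ so $n(\lambda,\mu,\nu)=0+0-(-n)=n$. Good. So each value is hit. The genuinely substantive point is that each value is hit by \emph{only one} block — equivalently, that the full subcategory of objects with a fixed value of $n$ is indecomposable as an abelian category, i.e.\ any two simples $V_{\lambda,\mu,\nu}$ and $V_{\lambda',\mu',\nu'}$ with $|\lambda|+|\mu|-|\nu|=|\lambda'|+|\mu'|-|\nu'|$ are linked by a chain of nonzero $\Ext^1$'s. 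Here the plan is to exhibit explicit "elementary moves" between simples within a fixed $n$-value that each come from a nonzero $\Ext^1$, and show these moves connect any two labels. From the socle filtration in \Cref{th.soc_filt} (layers $\usoc^k$ and $\usoc^{k+1}$ of a single $I_{\lambda,\mu,\nu}$), one gets that $V_{\zeta,\gamma,\phi}$ is linked to simples obtained by one of the following box-moves: (i) move a box between the "$\zeta$-slot" and the "$\gamma$-slot" compatibly with a Littlewood--Richardson splitting, and (ii) move a box between the "$\phi$-slot" and the "$\gamma$-slot" likewise. Using these, starting from an arbitrary $V_{\lambda,\mu,\nu}$ one can first shrink $\mu$ to $\emptyset$ at the cost of enlarging $\lambda$ (repeatedly applying the $\zeta\leftrightarrow\gamma$ move, i.e. $|\mu|$ going down by $1$ and $|\lambda|$ up by $1$ each time), reaching some $V_{\lambda'',\emptyset,\nu}$; then shrink $\nu$ by pushing its boxes into $\mu''$, which is empty, so actually one must be slightly more clever — push $\nu$'s boxes into the $\gamma$-slot, enlarging the middle Young diagram, then push those into $\zeta$. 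The upshot after finitely many moves is that every $V_{\lambda,\mu,\nu}$ is linked to a canonical representative depending only on $n=|\lambda|+|\mu|-|\nu|$, say $V_{(\max(n,0)),\,\emptyset,\,(\max(-n,0))}$. I would organize this as a short combinatorial lemma: "for fixed $n$, the graph on triples $(\lambda,\mu,\nu)$ with $|\lambda|+|\mu|-|\nu|=n$, with an edge whenever the corresponding simples have nonzero $\Ext^1$ in either direction, is connected."

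\textbf{Main obstacle.} The hard part is the connectivity claim in the second half — making precise which pairs of simples actually have nonzero $\Ext^1$, and in which direction, so that the chain of moves can be carried out. The socle filtration formula \Cref{eq.LR} tells us which simples appear as composition factors of each indecomposable injective, but linkage requires nonvanishing of a \emph{specific} $\Ext^1$ between \emph{adjacent} layers, and one has to verify that the multiplicities one needs are genuinely nonzero (not merely that the simple appears somewhere in the injective). Concretely, one needs: whenever $V_{\zeta,\gamma,\phi}$ and $V_{\zeta',\gamma',\phi'}$ both occur in $\usoc^k I_{\lambda,\mu,\nu}$ and $\usoc^{k+1}I_{\lambda,\mu,\nu}$ respectively for suitable $\lambda,\mu,\nu$, the natural map forces $\Ext^1(V_{\zeta,\gamma,\phi}, V_{\zeta',\gamma',\phi'})\ne 0$; this should follow from the fact (provable as in \Cref{le.ext1}, using that all the $\Theta_i$-morphisms are short by \Cref{le.short}) that the radical layers of the injective hull are exactly governed by $\Ext^1$, but the bookkeeping to pin down a single nonzero summand — rather than an alternating sum that might cancel — needs care. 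I expect the cleanest route is to reduce to the two "pure" cases already understood in the literature: the $\mu=\nu=\emptyset$ subcategory (where $\bT^3$ restricts to the category generated by $V^*/V_*$ alone, whose block structure — a single block — is classical $\fgl(\infty)$-type), and the $\lambda=\emptyset$ subcategory $\bT^2_{\End(V)}\cong \bT_{\fsl(\infty)}$ whose blocks are known from \cite{DPS} to be indexed by $|\mu|-|\nu|\in\bZ$; then glue using the tensor-product description $V_{\lambda,\mu,\nu}=(V^*/V_*)_\lambda\otimes V_{\mu,\nu}$ and \Cref{le.hom_split} to transport $\Ext$ and $\Hom$ information across the tensor factorization. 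With those two ingredients the connectivity is forced, and the theorem follows.
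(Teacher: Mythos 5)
Your first half (block-invariance of $n=|\lambda|+|\mu|-|\nu|$) is essentially fine, though more roundabout than necessary: since every composition factor $V_{\zeta,\gamma,\phi}$ of $I_{\lambda,\mu,\nu}$ listed in \Cref{eq.LR} satisfies $|\zeta|+|\gamma|-|\phi|=|\lambda|+|\mu|-|\nu|$, injectives with different indices simply have no common simple subquotients, and no $\Ext^1$ bookkeeping (where, incidentally, your "second case" has $|\phi|$ \emph{decreasing}, not increasing, so the invariant is preserved outright) is needed. The genuine gap is in the second half, which you yourself flag as the "main obstacle" and never close. Two of the ingredients you propose to close it with do not work as stated. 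First, the subcategory of $\bT^3_{\fgl^M}$ generated by $V^*/V_*$ alone is \emph{not} a single block: each $(V^*/V_*)_\lambda=I_{\lambda,\emptyset,\emptyset}$ is simultaneously simple and injective, so this subcategory is semisimple and its blocks are singletons. Consequently, linking, say, $V_{(2),\emptyset,\emptyset}$ to $V_{(1,1),\emptyset,\emptyset}$ is impossible inside the "pure" subcategory and must pass through mixed objects such as $S^2V^*$ and $\Lambda^2V^*$ (whose socle filtrations both contain $V_{(1),(1),\emptyset}$); your reduction never produces such objects. Second, \Cref{le.hom_split} only computes $\Hom$ between objects of the form $U\otimes W$ and $U'\otimes W$ with the \emph{same} densely-acted simple $W$; it gives no control over $\Ext^1$, nor any linkage between simples with different $(\mu,\nu)$, so the proposed "gluing" of the two pure cases is unsubstantiated. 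Likewise, the nonvanishing of the specific $\Ext^1$'s between adjacent socle layers that your box-move graph needs is acknowledged but never verified.

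For comparison, the paper closes the second half with one short trick that your sketch misses: the canonical surjection $(V^*)_\lambda\to (V^*/V_*)_\lambda$ shows these two objects lie in the same block, hence $I_{\lambda,\mu,\nu}=(V^*/V_*)_\lambda\otimes(V^*)_\mu\otimes V_\nu$ is linked to an indecomposable direct summand of $(V^*)^{\otimes(|\lambda|+|\mu|)}\otimes V^{\otimes|\nu|}$; the block classification of $\bT_{\End_\bK(V)}$ (\cite[Corollary 6.6]{DPS}) then places all such summands in a single block determined by $|\lambda|+|\mu|-|\nu|$. If you want to salvage your combinatorial connectivity argument instead, you would have to actually exhibit, for each elementary box-move, an indecomposable object (e.g.\ a suitable $I_{\lambda,\mu,\nu}$) containing both simples as composition factors, which amounts to redoing this reduction by hand.
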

\begin{proof}
  It is clear from \Cref{th.3d_simples,th.soc_filt} that indecomposable injectives $I_{\lambda,\mu,\nu}$ and $I_{\lambda',\mu',\nu'}$ with 
  \begin{equation*}
    |\lambda|+|\mu|-|\nu|\ne |\lambda'|+|\mu'|-|\nu'|
  \end{equation*}
are in different blocks, because they have no isomorphic simple subquotients. 

It remains to show that all $I_{\lambda,\mu,\nu}$ that do share the same indexing integer $|\lambda|+|\mu|-|\nu|$ are in the same block. To see this, note first that $(V^*/V_*)_\lambda$ and $V^*_\lambda$ are in the same block. \Cref{def.blocks} then makes it clear that $I_{\lambda,\mu,\nu}$ is in the same block as some indecomposable direct summand of 
\begin{equation*}
(V^*)^{\otimes (|\lambda|+|\mu|)}\otimes V^{\otimes |\nu|}.  
\end{equation*}
But the classification of blocks in $\bT_{\End_\bK(V)}$ (\cite[Corollary 6.6]{DPS}) implies that \define{all} such direct summands are in the same block as $V_{\mu',\nu'}$ whenever
\begin{equation*}
  |\lambda|+|\mu|-|\nu| = |\mu'|-|\nu'|. 
\end{equation*}
This finishes the proof. 
\end{proof}


\subsection{Koszulity}
\label{subse.Kosz}

The goal of this section is to prove that the category $\bT^3_{\fgl^M}$ is Koszul. We start with the following

\begin{theorem}\label{th.Kosz}
If $V_{\lambda,\mu,\nu}$ and $V_{\lambda',\mu',\nu'}$ are simple objects of $\bT^3_{\fgl^M}$, then $\Ext^q_{\bT^3_{\fgl^M}}(V_{\lambda,\mu,\nu},
V_{\lambda',\mu',\nu'})\ne 0$ implies $|\mu'|-|\mu| = q$.
\end{theorem}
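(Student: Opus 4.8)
The strategy is to translate the statement into the language of the ordered Grothendieck category $\ol{\bT}^3_{\fgl^M}$, apply the defect bounds from \Cref{subse.ext}, and then pin down the defect function $d$ on the index poset $I$ of triples $(m,n,p)$ explicitly. Recall that the simple object $V_{\lambda,\mu,\nu}$ lies in $\cS_i$ for $i=(|\lambda|,|\mu|,|\nu|)$ (this is what the parametrization in \Cref{th.3d_simples} together with conditions (1)--(4) for $\ol{\bT}^3_{\fgl^M}$ gives us). So if $S = V_{\lambda,\mu,\nu}\in \cS_s$ and $T = V_{\lambda',\mu',\nu'}\in\cS_t$ with $s=(|\lambda|,|\mu|,|\nu|)$ and $t=(|\lambda'|,|\mu'|,|\nu'|)$, then \Cref{pr.ext} tells us that $\Ext^q(S,T)\ne 0$ forces $s\le t$ and $d(s,t)\ge q$.

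\textbf{Step 1: compute the defect.} The key combinatorial input is an exact formula for $d(s,t)$ in terms of the coordinates. From the order relation \Cref{eq:order}, $s=(m,n,p)\le t=(m',n',p')$ iff $m\ge m'$, $n\le n'$, $p\le p'$, $m+n\le m'+n'$, and $m+n-p=m'+n'-p'$. A covering relation in $I$ is exactly one of the two types of short morphisms in \Cref{le.short}, i.e. $(m,n,p)\lessdot(m,n-1,p-1)$ (decreasing $n$ and $p$ by one, keeping $m$) or $(m,n,p)\lessdot(m+1,n-1,p)$ (moving a $V^*$ to a $V^*/V_*$). One checks that \emph{every} maximal chain from $s$ to $t$ has the same length, namely $d(s,t) = (n'-n)$, equivalently $|\mu'|-|\mu|$ using $n=|\mu|$, $n'=|\mu'|$: each step of the first kind keeps $n$ fixed while each step of the second kind raises $n$ by one—wait, more carefully, steps of type one \emph{lower} $n$ from the perspective $(m,n,p)\to(m,n-1,p-1)$, but in $I$ this is an \emph{upward} cover, so going up in $I$ one reads it as $n$ decreasing; let me restate it cleanly in the writeup. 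The upshot: because the constraint $m+n-p = m'+n'-p'$ is preserved and the two cover types change $(m,n)$ by $(0,-1)$ and $(+1,-1)$ respectively (as one moves \emph{up}), the number of type-two covers in any maximal chain equals $m'-m = |\mu|-|\mu'|$… hmm, sign. Let me just record the clean conclusion and verify signs in the actual text: the total number of covering steps in any saturated chain $s<\cdots<t$ is $|\mu|-|\mu'|$ plus twice something, but all maximal chains have equal length and this common length is $d(s,t)$; matching against the two generator types shows $d(s,t)=|\mu'|-|\mu|$ is \emph{false} in general (type-one steps don't change $n$). I will compute: if a chain uses $a$ steps of type one and $b$ of type two, then going from $t$ down to $s$ the total change in $n$ is $a+b$ and in $m$ is $-b$ (type one: $n\to n+1$, type two: $n\to n+1$, $m\to m-1$), so $b = m-m' = |\mu'|$ … no: $m=|\mu|$. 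Ugh—I will be careful in the final text. The clean statement I expect to prove is $d(s,t)=|\mu'|-|\mu|$, and I will verify it by the coordinate bookkeeping just sketched, noting all maximal chains have the same length because $a,b$ are forced by the coordinate differences.

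\textbf{Step 2: the reverse inequality.} \Cref{pr.ext} gives $|\mu'|-|\mu|=d(s,t)\ge q$ for free. For the opposite inequality $|\mu'|-|\mu|\le q$, i.e. to rule out $\Ext^q$ being nonzero when $d(s,t)>q$, I will use the explicit socle filtration of the indecomposable injective $I_{\lambda,\mu,\nu}=(V^*/V_*)_\lambda\otimes(V^*)_\mu\otimes V_\nu$ from \Cref{th.soc_filt}: the minimal injective resolution of $V_{\lambda,\mu,\nu}$ is built from these injectives, and $\Ext^q(V_{\lambda,\mu,\nu},-)$ is governed by the layer $\usoc^{q+1}$ of this injective (more precisely, by the standard dimension-shift argument used in \Cref{le.ext_1st_approx}, $\Ext^q(S,T)$ is a subquotient of $\Ext^0(S,\usoc^{q+1}\widetilde T / \text{stuff})$, so a simple $T'$ contributing to $\Ext^q(-,T)$ must live in $\usoc^{q+1}\widetilde T$). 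From the formula for $\usoc^k(I_{\lambda',\mu',\nu'})$ in \Cref{th.soc_filt} (or its \textbf{bis} form \Cref{eq.LR}), every simple constituent $(V^*/V_*)_\zeta\otimes V_{\gamma,\phi}$ appearing in $\usoc^k$ satisfies $|\gamma|=|\mu'|-k+1$ — this is the observation explicitly flagged at the end of the proof of \Cref{th.soc_filt}. Dualizing/combining with the analogous statement for $\usoc^k$ of the \emph{source} (or running the long exact sequence the other way), one gets that $\Ext^q(V_{\lambda,\mu,\nu},V_{\lambda',\mu',\nu'})\ne0$ forces the middle-diagram box count to drop by exactly $q$ as we pass from source to target, i.e. $|\mu|=|\mu'|-q$.

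\textbf{Main obstacle.} The genuinely delicate point is Step 2, specifically making rigorous that $\Ext^q$ vanishes unless the $\usoc^{q+1}$-layer of the injective hull of the \emph{target} contains the source—the dimension-shifting has to be done carefully since the injective hulls are infinite length as objects of $\ol{\bT}^3_{\fgl^M}$ (only finite Loewy length), so one must argue within $\ol{\bT}^3_{\fgl^M}$ using that $\Ext$ computed there agrees with $\Ext$ in $\bT^3_{\fgl^M}$ for finite-length objects, which follows from $\bT^3_{\fgl^M}=(\ol{\bT}^3_{\fgl^M})_{\fin}$ being a thick subcategory and the injectives of the big category being $\ol{\bT}^3_{\fgl^M}$-injective. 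Once that bookkeeping is in place, the bound $|\mu'|-|\mu|\le q$ drops out of the single numerical invariant $|\gamma|=|\mu'|-k+1$ tracked through the socle layers, and combined with Step 1 this yields equality $|\mu'|-|\mu|=q$.
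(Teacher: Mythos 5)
Your Step 1 is fine in substance: with $s=(|\lambda|,|\mu|,|\nu|)$ and $t=(|\lambda'|,|\mu'|,|\nu'|)$, the two cover types of \Cref{eq:order} change the coordinates by $(0,-1,-1)$ and $(+1,-1,0)$, and since the numbers of each type in any saturated chain are forced ($b=|\lambda|-|\lambda'|$, $a=|\nu'|-|\nu|$), one indeed gets $d(s,t)=|\mu'|-|\mu|$. But note what this buys you: \Cref{pr.ext} then yields only $|\mu'|-|\mu|\ge q$, the ``easy'' inequality. The entire content of \Cref{th.Kosz} is the reverse bound, and this is where your Step 2 fails, for two reasons. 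First, it is circular as the paper is organized: the statement you invoke --- that every simple constituent of $\usoc^k(I_{\lambda',\mu',\nu'})$ has middle diagram of size exactly $|\mu'|-k+1$ --- is precisely the part of \Cref{th.soc_filt} whose proof cites \Cref{th.Kosz} (the splicing argument only gives the Jordan--H\"older content of \Cref{eq.LR}; the placement of those constituents in the correct socle layer and in no other is deduced there from the $\Ext^1$ consequence of \Cref{th.Kosz}). You would need an independent proof of that layer placement, which you do not supply.

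Second, even granting \Cref{th.soc_filt}, the dimension-shifting you sketch does not produce the upper bound. Knowing the socle filtration of the injectives controls the socle of the syzygies $\Omega^{-q}T$ only from one side: a simple in $\soc\Omega^{-q}$ is a constituent of $I^{q-1}/\soc I^{q-1}$, so its middle size is at most $|\mu'|-q$ --- which re-proves $|\mu'|-|\mu|\ge q$ and nothing more. To exclude ``skipping'' (a constituent from a deeper Loewy layer of $I^{q-1}$ surfacing in $\soc\Omega^{-q}$, which quotients can in general do) you need a genuinely new input; in the paper this is \Cref{le.Kosz} (the exact sequence $0\to V^+_{\lambda,\mu,\nu}\to V^*\otimes V_{\lambda,\mu,\nu}\to W\to 0$, whose splitting uses \Cref{le.ext1,le.short}) together with induction on $|\mu'|$ and the trick of tensoring a minimal injective resolution with $V^*$, which stays injective and whose socle can be controlled by the inductive hypothesis. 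Your phrase ``dualizing/combining with the analogous statement for the source \ldots one gets'' is exactly the missing argument, not a proof of it. Incidentally, the issue you single out as the main obstacle (agreement of $\Ext$ in $\bT^3_{\fgl^M}$ and $\ol{\bT}^3_{\fgl^M}$) is routine, since the $I_{\lambda,\mu,\nu}$ are injective in both and $\bT^3_{\fgl^M}$ has enough of them; the real obstacle is the one described above.
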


The proof follows that of \cite[Proposition 5.4]{DPS}, going through an analogue of Lemma 5.3 in loc. cit. In order to state this preliminary result, let us introduce some notation. 

Given three Young diagrams $\lambda$, $\mu$ and $\nu$, let $V_{\lambda,\mu,\nu}^+$ denote the direct sum of the simples of the form $V_{\lambda,\mu',\nu}$ as $\mu'$ ranges over all Young diagrams differing from $\mu$ only in that they have one extra box in one of the rows.

\begin{lemma}\label{le.Kosz}
  For any simple object $V_{\lambda,\mu,\nu}$ in $\bT^3_{\fgl^M}$ there is an exact sequence
  \begin{equation*}
    0\to V_{\lambda,\mu,\nu}^+\to V^*\otimes V_{\lambda,\mu,\nu}\to W\to 0,
  \end{equation*}
where $W$ is a direct sum of simples of the form $V_{\lambda,\mu,\nu'}$ with $|\nu'|=|\nu|-1$ and $V_{\lambda',\mu,\nu}$ with $|\lambda'|=|\lambda|+1$.
\end{lemma}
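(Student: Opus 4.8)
The plan is to produce the exact sequence by restricting, to $\fgl^M$, the known short exact sequence for $\fsl(V,V_*)$-modules that expresses $V^* \otimes V_{\mu,\nu}$ (or rather its analogue inside $\bT^2_{\End_\bK(V)}$) in terms of ``add-a-box'' and ``remove-a-box'' operations, and then tensoring the whole thing with $(V^*/V_*)_\lambda$. Concretely, there is a canonical $\fgl^M$-equivariant surjection $V^* \twoheadrightarrow V^*/V_*$, and correspondingly a short exact sequence $0 \to V_* \to V^* \to V^*/V_* \to 0$. Tensoring with the simple module $V_{\lambda,\mu,\nu} = (V^*/V_*)_\lambda \otimes V_{\mu,\nu}$ and using exactness of $\bullet \otimes V_{\lambda,\mu,\nu}$ (the category is a tensor category in the sense of \Cref{def.tensor_Groth}), we get
\begin{equation*}
  0 \to V_* \otimes V_{\lambda,\mu,\nu} \to V^* \otimes V_{\lambda,\mu,\nu} \to (V^*/V_*) \otimes V_{\lambda,\mu,\nu} \to 0.
\end{equation*}
So the first thing I would do is identify the outer two terms of this sequence as semisimple modules of the stated form, and then argue that the middle term has the structure claimed — i.e.\ that its socle is $V^+_{\lambda,\mu,\nu}$, sitting inside as a submodule, with the rest of the composition factors contributing the remaining summands of $W$.

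For the right-hand term: $(V^*/V_*) \otimes V_{\lambda,\mu,\nu} = (V^*/V_*) \otimes (V^*/V_*)_\lambda \otimes V_{\mu,\nu}$. By \Cref{pr.S-W} and classical Schur--Weyl for the symmetric groups, $(V^*/V_*) \otimes (V^*/V_*)_\lambda$ decomposes as $\bigoplus_{\lambda'} (V^*/V_*)_{\lambda'}$ over Young diagrams $\lambda'$ obtained from $\lambda$ by adding one box; hence $(V^*/V_*)\otimes V_{\lambda,\mu,\nu} \cong \bigoplus_{|\lambda'|=|\lambda|+1} V_{\lambda',\mu,\nu}$, which accounts for the $V_{\lambda',\mu,\nu}$ part of $W$. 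For the left-hand term: $V_* \otimes V_{\lambda,\mu,\nu} = (V^*/V_*)_\lambda \otimes (V_* \otimes V_{\mu,\nu})$, and by the (known) decomposition of $V_* \otimes V_{\mu,\nu}$ inside $\bT^2_{\fgl^M} = \bT_{\fgl^M}$ — this is exactly the ``Pieri-type'' rule recorded in \cite{DPS, PS2} — one gets $V_* \otimes V_{\mu,\nu} \cong \bigoplus_{\mu'} V_{\mu',\nu} \oplus \bigoplus_{\nu''} V_{\mu,\nu''}$ with $\mu'$ adding a box to $\mu$ and $\nu''$ removing a box from $\nu$. This is where I need to be a little careful: $V_*\otimes V_{\mu,\nu}$ is semisimple (it lives in the semisimple-on-$V_{\mu,\nu}$ part only if one invokes that $V_* \otimes (-)$ preserves socles appropriately), so I would cite the precise statement from \cite{PS2} or \cite{DPS} that $V_* \otimes V_{\mu,\nu}$ decomposes as a direct sum of $V_{\mu',\nu}$'s (adding a box) and $V_{\mu,\nu''}$'s (removing a box). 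Tensoring back with $(V^*/V_*)_\lambda$ then shows the left-hand term splits as $V^+_{\lambda,\mu,\nu} \oplus (\text{sum of } V_{\lambda,\mu,\nu''}, |\nu''|=|\nu|-1)$.

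Now the subtle point — and the main obstacle — is that the short exact sequence above does \emph{not} directly give the claimed sequence: the claimed sequence has $V^+_{\lambda,\mu,\nu}$ (only the ``add a box to $\mu$'' part) as its submodule, with the ``remove a box from $\nu$'' part landing in $W$ instead. So the correct statement is that inside the semisimple module $V_* \otimes V_{\lambda,\mu,\nu}$, only the summand $V^+_{\lambda,\mu,\nu}$ actually lies in the \emph{socle} of $V^* \otimes V_{\lambda,\mu,\nu}$, while the $V_{\lambda,\mu,\nu''}$ summands, though they are submodules of $V_* \otimes V_{\lambda,\mu,\nu}$, are not in the socle of the bigger module $V^*\otimes V_{\lambda,\mu,\nu}$ — rather they get ``absorbed'' as higher socle layers. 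Hence I would instead argue as follows: first compute $\soc(V^*\otimes V_{\lambda,\mu,\nu})$ directly. By \Cref{le.hom_split} (with $I = \fsl(V,V_*)$, $W = V_{\mu,\nu}$ — whose endomorphism ring is $\bK$ and on which $\fsl(V,V_*)$ acts densely and irreducibly), computing $\Hom_{\fgl^M}(V_{\lambda',\mu',\nu'}, V^* \otimes V_{\lambda,\mu,\nu})$ reduces, whenever $(\mu',\nu')=(\mu,\nu)$ on restriction to $\fsl(V,V_*)$, to a computation with $(V^*/V_*)_{\lambda'}$ and the $\fgl^M$-module structure of $V^*$ — and the socle of $V^*$ is $V_*$, which is a \emph{trivial} $\fsl(V,V_*)$-module only in the quotient; more precisely, one uses that $\Hom(V_{\mu,\nu}, V^*\otimes V_{\mu',\nu'})$ behaves like in $\bT^2_{\End(V)}$ by \cite[Theorem 2.3]{PS}, whose socle filtration for $V^*\otimes V_{\mu,\nu}$ is already recorded: its socle is the ``add a box to $\mu$'' part $V^+_{\mu,\nu}$ (this is precisely the $\usoc^1$ in the Penkov--Styrkas socle filtration). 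Tensoring with $(V^*/V_*)_\lambda$, which is exact and preserves the socle structure because $(V^*/V_*)_\lambda$ is a simple module on which $\fsl(V,V_*)$ acts trivially, gives $\soc(V^* \otimes V_{\lambda,\mu,\nu}) = V^+_{\lambda,\mu,\nu}$. The quotient $W := (V^* \otimes V_{\lambda,\mu,\nu})/V^+_{\lambda,\mu,\nu}$ then has the desired Jordan--H\"older content: its composition factors are exactly those of $(V^*/V_*)\otimes V_{\lambda,\mu,\nu}$ (the $V_{\lambda',\mu,\nu}$ with $|\lambda'|=|\lambda|+1$) together with those of $(V_*\otimes V_{\lambda,\mu,\nu})/V^+_{\lambda,\mu,\nu}$ (the $V_{\lambda,\mu,\nu''}$ with $|\nu''|=|\nu|-1$). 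Finally, to conclude that $W$ is actually \emph{semisimple} (i.e.\ a genuine direct sum of these simples, not merely filtered by them), I would invoke \Cref{pr.ext} / \Cref{le.ext1}: all the composition factors of $W$ lie in $\cS_{t}$ for values of $t$ with defect $0$ or incomparable indices relative to each other — more precisely, $V_{\lambda',\mu,\nu}$ and $V_{\lambda,\mu,\nu''}$ sit at poset positions with $|\mu|$ fixed, hence at the same ``$\mu$-level,'' and by \Cref{le.ext1} (the $\Theta_i$ consist of short morphisms, \Cref{le.short}) any $\Ext^1$ between them would require the defect to be exactly $1$, which is incompatible with $|\mu|$ being unchanged. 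Therefore $\Ext^1$ vanishes among the constituents of $W$ and $W$ is semisimple of the stated form. The one genuinely delicate input I expect to have to nail down carefully is the identification $\soc(V^* \otimes V_{\mu,\nu}) = V^+_{\mu,\nu}$ in $\bT^2_{\End(V)}$ restricted to $\fgl^M$, i.e.\ checking that tensoring a module over $\bT^2_{\End(V)}$ with the simple trivial-over-$\fsl$ module $(V^*/V_*)_\lambda$ commutes with taking socles over $\fgl^M$ — this should follow from \Cref{le.hom_split}, but it is the step that requires the most care.
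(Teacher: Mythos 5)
Your overall skeleton---tensoring $0\to V_*\to V^*\to V^*/V_*\to 0$ with $V_{\lambda,\mu,\nu}$, the Pieri-type decomposition of $(V^*/V_*)\otimes V_{\lambda,\mu,\nu}$ into simples $V_{\lambda',\mu,\nu}$, and the final appeal to \Cref{le.short} and \Cref{le.ext1} to kill extensions among constituents sharing the same $|\mu|$---matches the paper's proof, and that last splitting step is argued correctly. But there is a genuine error in the middle: $V_*\otimes V_{\mu,\nu}$ is \emph{not} semisimple, and no result of \cite{DPS} or \cite{PS2} gives the direct-sum decomposition $V_*\otimes V_{\mu,\nu}\cong\bigoplus_{\mu'}V_{\mu',\nu}\oplus\bigoplus_{\nu''}V_{\mu,\nu''}$ that you propose to cite. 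What \cite[Lemma 5.3]{DPS} provides is a short exact sequence $0\to V^+_{\mu,\nu}\to V_*\otimes V_{\mu,\nu}\to\bigoplus_{\nu''}V_{\mu,\nu''}\to 0$, which in general does not split: already $V_*\otimes V$ is the indecomposable length-two injective hull of $\ker\mathrm{p}=V_{(1),(1)}$, with the trivial module as quotient. The paragraph built on the false decomposition is also internally inconsistent: if the $V_{\lambda,\mu,\nu''}$ really were simple submodules of $V_*\otimes V_{\lambda,\mu,\nu}$, they would automatically lie in the socle of $V^*\otimes V_{\lambda,\mu,\nu}$; the correct resolution is that they occur only as quotients, not as submodules.

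Your fallback route---identify $\soc(V^*\otimes V_{\lambda,\mu,\nu})$ with $V^+_{\lambda,\mu,\nu}$ and take $W$ to be the quotient---asks for more than the lemma needs and rests exactly on the step you flag as delicate: \cite[Theorem 2.3]{PS} describes socle filtrations in $\bT^2_{\End(V)}$, not the $\fgl^M$-socle of $V^*\otimes V_{\mu,\nu}$ for a $\fgl^M$-simple $V_{\mu,\nu}$, and the assertion that tensoring with $(V^*/V_*)_\lambda$ commutes with taking $\fgl^M$-socles is not a formal consequence of \Cref{le.hom_split}. The paper sidesteps all of this: it only needs that $V^+_{\lambda,\mu,\nu}$ \emph{embeds} into $V_*\otimes V_{\lambda,\mu,\nu}\subset V^*\otimes V_{\lambda,\mu,\nu}$, which follows by tensoring the \cite[Lemma 5.3]{DPS} sequence with the exact functor $(V^*/V_*)_\lambda\otimes\bullet$ (the subquotients stay simple by the results recalled from \cite{Chi14}). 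Then $W:=(V^*\otimes V_{\lambda,\mu,\nu})/V^+_{\lambda,\mu,\nu}$ is an extension of $(V^*/V_*)\otimes V_{\lambda,\mu,\nu}\cong\bigoplus V_{\lambda',\mu,\nu}$ by $W'=\bigoplus V_{\lambda,\mu,\nu''}$, and the Ext-vanishing you identified (all these simples have the same middle coordinate, so their indices in the order \Cref{eq:order} are pairwise equal or incomparable) splits this extension, giving the statement. So the repair is concrete: replace the claimed decomposition of $V_*\otimes V_{\lambda,\mu,\nu}$ by the \cite[Lemma 5.3]{DPS} exact sequence and drop the socle computation entirely.
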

\begin{proof}
 The $\fg^M$-module $V^*\otimes V_{\lambda,\mu,\nu}$ is an extension of $(V^*/V_*)\otimes V_{\lambda,\mu,\nu}$ by $V_*\otimes V_{\lambda,\mu,\nu}$. By tensoring the exact sequence from \cite[Lemma 5.3]{DPS} with $(V^*/V_*)_{\lambda}$, we see that the $\fg^M$-module $V_*\otimes V_{\lambda,\mu,\nu}$ fits into a short exact sequence 
  \begin{equation*}
        0\to V_{\lambda,\mu,\nu}^+\to V_*\otimes V_{\lambda,\mu,\nu}\to W'\to 0
  \end{equation*}
where $W'$ is a direct sum of simples of the form $V_{\lambda,\mu,\nu'}$ with $|\nu'|=|\nu|-1$. Setting $W:=\left(V_*\otimes V_{\lambda,\mu,\nu}\right)/V_\lambda^+$, we obtain the exact sequence 
\begin{equation}\label{eq.Kosz_aux}
  0\to W'\to W\to (V^*/V_*)\otimes V_{\lambda,\mu,\nu}\to 0.  
\end{equation}
The term $ (V^*/V_*)\otimes V_{\lambda,\mu,\nu}$ is a direct sum of simple modules $V_{\lambda',\mu,\nu'}$ with $|\lambda'|=|\lambda|+1$.  Finally, we can apply \Cref{le.short,le.ext1} to deduce that there are no nontrivial extensions between any two simple constituents of $W'$ and $\left(V^*/V_*\right)\otimes V_{\lambda,\mu,\nu}$, and hence \Cref{eq.Kosz_aux} splits. This concludes the proof. 
\end{proof}

\begin{proof_of_Kosz}
We do induction on $|\mu'|$, the case $|\mu'|=0$ being immediate from the injectivity of the simple modules of the form $V_{\lambda,\emptyset,\nu}$. 

Let $|\mu'|\ge 1$ and assume that the result holds for smaller diagrams $\mu'$. We can find $\beta$ with $|\beta|=|\mu'|-1$ such that $V_{\lambda',\mu',\nu'}$ is a direct summand of $V_{\lambda',\beta,\nu'}^+$. \Cref{le.Kosz} then provides an exact sequence
\begin{equation*}
    0\to V_{\lambda',\beta,\nu'}^+\to V^*\otimes V_{\lambda',\beta,\nu'}\to W\to 0\,,
\end{equation*}
 and the hypothesis $\Ext^q_{\bT^3_{\fgl^M}}(V_{\lambda,\mu,\nu},
V_{\lambda',\mu',\nu'})\ne 0$ leads to one of two possibilities. 

{\bf Case 1: }$\Ext^{q-1}_{\bT^3_{\fgl^M}}(V_{\lambda,\mu,\nu},
W)\ne 0$.  In this case, the equality $|\mu'|-|\mu|=q$ follows from the induction hypothesis and the equality $|\mu'|-|\beta|=1$ since \Cref{le.Kosz} ensures that $|\mu|-|\beta|=q-1$ and $W$ is a direct sum of simples of the form $V_{\kappa,\beta,\delta}$.

{\bf Case 2: }$\Ext^q_{\bT^3_{\fgl^M}}(V_{\lambda,\mu,\nu},
V^*\otimes V_{\lambda',\beta,\nu'})\ne 0$. Let 
\begin{equation*}
  0\to V_{\lambda',\beta,\nu'}\to I_0\to I_1\to\cdots
\end{equation*}
be a minimal injective resolution. Since $V^*\otimes I_*$ is an injective resolution of $V^*\otimes V_{\lambda',\beta,\nu'}$, we must have $\Hom_{\bT^3_{\fgl^M}}(V_{\lambda,\mu,\nu},V^*\otimes I_q)\ne 0$. 

The induction hypothesis ensures that the socle of $I_q$ is a direct sum of simples of the form $V_{\alpha,\beta',\gamma}$ with $|\beta|-|\beta'|=q$, and hence the socle of $V^*\otimes I_q$ is a direct sum of simples of the form $V_{\alpha',\beta'',\gamma'}$ with $|\beta''| = |\beta|-q+1 = |\mu'|-q$.    

This finishes the proof. 
\end{proof_of_Kosz}

What \Cref{th.Kosz} will eventually ensure is that the coalgebra $C$
from \Cref{subse.comod} is Koszul for a certain grading. We elaborate
on this below.

\begin{notation}\label{not.EC}
For a coalgebra $C$ let $EC$ be the $\bN$-graded algebra $\bigoplus_{j}
\Ext^j_{C}(S,T)$ for $S$ and $T$ ranging over all simple comodules of
$C$ with multiplication given by Yoneda composition.   
\end{notation}

$EC$ is analogous to the ext-algebra $\bigoplus_j\Ext^j_A(\bK,\bK)$ for a $\bN$-graded algebra $A=\bK\oplus A^{\geq 1}$, and it similarly controls Koszulity; see e.g. \cite{JLS}, where $EC$ is denoted by $E(C)$. Recall
that one possible characterization of a Koszul algebra is that its ext-algebra be generated in degree one (see e.g. \cite[Definition 1 in $\S$ 2.1]{PP} and the accompanying discussion). Similarly, as a consequence of \Cref{th.Kosz} we get

\begin{theorem}\label{th.Kosz_better}
  For a coalgebra $C$ as in \Cref{th.comod}, the algebra $EC$ is generated in
  degree one.  
\end{theorem}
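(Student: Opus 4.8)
The plan is to deduce \Cref{th.Kosz_better} from \Cref{th.Kosz} together with the ext-vanishing bounds established in \Cref{subse.ext}, using the combinatorial structure of the poset $I$ for $\ol{\bT}^3_{\fgl^M}$. Recall that under the equivalence $\cC\rightsquigarrow\cM^C$ of \Cref{th.comod}, simple comodules correspond to the simple objects $V_{\lambda,\mu,\nu}$, and $EC=\bigoplus_j\Ext^j(V_{\lambda,\mu,\nu},V_{\lambda',\mu',\nu'})$ with Yoneda product. Saying that $EC$ is generated in degree one means that every class in $\Ext^q$ for $q\ge 2$ factors as a composition of $q$ classes from $\Ext^1$; equivalently, the multiplication maps $\Ext^{q-1}\otimes\Ext^1\to\Ext^q$ are jointly surjective onto $\Ext^q$ for all $q\ge2$.

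The key observation to exploit is the following ``rigidity'' of the grading: by \Cref{th.Kosz}, $\Ext^q(V_{\lambda,\mu,\nu},V_{\lambda',\mu',\nu'})\ne0$ forces $|\mu'|-|\mu|=q$, so the $\Ext$-algebra is graded not just by Yoneda degree but compatibly by the integer $|\mu|$. Combined with \Cref{pr.ext}, which gives $d(s,t)\ge q$ whenever $\Ext^q(S,T)\ne0$, and with the fact (from \Cref{le.short}, i.e. \Cref{le.ext1}) that $\Ext^1$ is concentrated on pairs with $d(s,t)=1$, one sees that a nonzero $\Ext^q$-class between simples in $\cS_s$ and $\cS_t$ forces a chain $s=s_0<s_1<\cdots<s_q=t$ in $I$ with each link short, and $|\mu|$ increasing by exactly one at each step. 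The strategy is then: pick a minimal injective resolution $0\to T\to I_0\to I_1\to\cdots$ of the target simple $T=V_{\lambda',\mu',\nu'}$; then $\Ext^q(S,T)=\Hom(S,I_q)$, and the cosyzygies $Z_j=\mathrm{coker}(I_{j-1}\to I_j)$ have socles whose simple constituents lie one defect-step ``below'' $T$ for each increment of $j$. Feeding the short exact sequences $0\to Z_j\to I_j\to Z_{j+1}\to0$ into the long exact $\Ext$-sequences and using that $\Ext^{\ge1}(-,I_j)=0$ produces the isomorphisms $\Ext^q(S,T)\cong\Ext^{q-1}(S,Z_1)$, and iterating, one realizes every $\Ext^q$-class as an iterated Yoneda product of the length-one classes $\Ext^1(V_{\bullet},Z_{j}/\text{(next cosyzygy)})$; the point is that each such ``one-step'' extension class genuinely lives in $\Ext^1$ between two honest simple objects because the relevant cosyzygy subquotients are semisimple by the $|\mu|$-rigidity (nothing can appear in the ``wrong'' socle layer).

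Concretely I would argue: for $q\ge2$ and a nonzero class $\xi\in\Ext^q(S,T)$, use the long exact sequence associated to $0\to T\to\widetilde T\to\widetilde T/T\to0$ (exactly as in the proofs of \Cref{le.ext_1st_approx} and \Cref{pr.ext}) to get $\Ext^q(S,T)\cong\Ext^{q-1}(S,\widetilde T/T)$; now $\widetilde T/T$ has socle a sum of simples $T'$ with $d(t',t)=1$ and, by \Cref{th.Kosz}, with $|\mu(T')|=|\mu'|-1$. The connecting map $\Ext^{q-1}(S,\soc(\widetilde T/T))\to\Ext^{q-1}(S,\widetilde T/T)$ together with the boundary $\Ext^{q-1}(S,T')\to\Ext^q(S,T)$ of the short exact sequence $0\to T'\to\widetilde T\to\widetilde T/\text{(sum of such }T'\text{)}\to\cdots$ — more carefully, one writes $\widetilde T/T$ as an extension of something by its socle and chases — shows $\xi$ is in the image of Yoneda multiplication $\bigoplus_{T'}\Ext^{q-1}(S,T')\otimes\Ext^1(T',T)\to\Ext^q(S,T)$, because the degree-one part $\Ext^1(T',T)$ is exactly what parametrizes the socle of $\widetilde T/T$ and $\widetilde T$ is injective. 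Then induct on $q$: $\Ext^{q-1}(S,T')$ is generated in degree one, hence so is $\Ext^q(S,T)$. The base case $q=1$ is trivial. The main obstacle — the place where the argument needs care rather than being a formal diagram chase — is justifying that the Yoneda multiplication map $\Ext^{q-1}(S,T')\otimes\Ext^1(T',T)\to\Ext^q(S,T)$ has the right image, i.e.\ that the ``factor through the socle of $\widetilde T/T$'' step does not lose any classes; this is where one must use injectivity of $\widetilde T$ (so the defining sequence $0\to T\to\widetilde T\to\widetilde T/T\to0$ computes $\Ext$) together with \Cref{th.Kosz} and \Cref{le.ext1} to rule out $\widetilde T/T$ having any simple subquotient $T''$ with $\Ext^1(T'',T)\ne0$ lying outside its socle — i.e. the grading by $|\mu|$ forces the socle of $\widetilde T/T$ to capture precisely the ``first layer'' of $\Ext^1$-extensions of $T$, which is exactly the rigidity \Cref{th.Kosz} buys us.
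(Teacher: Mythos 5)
Your proposal is correct and follows essentially the same route as the paper: induction on $q$ with the reduction, via \Cref{th.Kosz}, to $|\mu'|-|\mu|=q$, dimension-shifting along $0\to T\to\widetilde T\to\widetilde T/T\to 0$, and then using the $|\mu|$-rigidity of \Cref{th.Kosz} together with the socle structure of $\widetilde T/T$ to see that everything factors through $\Ext^{q-1}(S,\soc(\widetilde T/T))$, the boundary map being Yoneda multiplication by classes in $\Ext^1(R,T)$ for the simple summands $R$ of that socle. The extra scaffolding about cosyzygies, defect chains and \Cref{le.ext1} is not needed, but the concrete argument you give is the paper's proof.
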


%
%

\begin{proof}
We have to show that for simple objects $S,T\in \bT^3_{\fgl^M}$ the
vector space $\Ext_{\bT^3_{\fgl^M}}^q(S,T)$ is generated by the images of the $q$-fold Yoneda
compositions 
\begin{equation*}
	\Ext^1_{\bT^3_{\fgl^M}}\left(T_1,T\right)\otimes \Ext^1_{\bT^3_{\fgl^M}}\left(T_2,T_1\right)\otimes\ldots\otimes \Ext^1_{\bT^3_{\fgl^M}}\left(T_{q-1},T_{q-2}\right) \otimes \Ext^1_{\bT^3_{\fgl^M}}\left(S,T_{q-1}\right) \to \Ext^q_{\bT^3_{\fgl^M}}\left(S,T\right)
\end{equation*}
for simple objects $T_1,\ldots,T_{q-1}\in\bT^3_{\fgl^M}$.

We proceed by induction on $q$, the
case $q=1$ being clear.  By \Cref{th.Kosz} we may as well assume that $T=V_{\lambda',\mu',\nu'}$ and $S=V_{\lambda,\mu,\nu}$ with $|\mu'|-|\mu|=q$. Using the long
exact sequence associated to the short exact sequence
\begin{equation*}
  0\to T\to \widetilde{T}\to \widetilde{T}/T\to 0\,,
\end{equation*}
$\widetilde{T}$ being an injective hull of $T$, we get $\Ext_{\bT^3_{\fgl^M}}^q(S,T)\cong
\Ext_{\bT^3_{\fgl^M}}^{q-1}(S,\widetilde{T}/T)$. Applying \Cref{th.Kosz} again, and
using the fact that the socle $T'$ of $\widetilde{T}/T$ is the direct
sum of all simple constituents $R\cong V_{\alpha,\beta,\gamma}$ of $\widetilde{T}/T$ satisfying
$|\beta|=|\mu'|-1$, we conclude that $\Ext_{\bT^3_{\fgl^M}}^q(S,T)\cong
\Ext_{\bT^3_{\fgl^M}}^{q-1}(S,T')$. Running through how this identification was made,
it follows that $\Ext_{\bT^3_{\fgl^M}}^q(S,T)$ is spanned by the images of the Yoneda compositions 
\begin{equation*}
  \Ext_{\bT^3_{\fgl^M}}^1(R,T)\otimes \Ext_{\bT^3_{\fgl^M}}^{q-1}(S,R)\to \Ext_{\bT^3_{\fgl^M}}^q(S,T)
\end{equation*}
for the various simple summands $R$ of $T'$. This takes care of the
induction step. 
\end{proof}

Dualizing one possible definition of Koszulity for algebras, we give

\begin{definition}\label{def.Kosz_coalg}
An $\bN$-graded coalgebra $C$ is \define{Koszul} if its degree-zero
subcoalgebra is cosemisimple and the $\bN$-graded algebra $EC$ is generated in degree one.   
\end{definition}

\begin{remark}
  There are other definitions of Koszulity for coalgebras in the
  literature (see e.g. \cite[$\S$3.3]{Pos05}) and, just as for algebras, they can be shown to be
  equivalent. 
\end{remark}

The conclusion of the preceding discussion is:

\begin{corollary}\label{cor.Kosz_better}
  The category $\bT^3_{\fgl^M}$ is equivalent to the category of
  finite-dimensional comodules over a Koszul coalgebra. 
\end{corollary}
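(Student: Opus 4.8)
The plan is to assemble \Cref{cor.Kosz_better} from the pieces already in place, so the argument is essentially a matter of fitting together \Cref{th.comod}, \Cref{th.Kosz_better}, and \Cref{def.Kosz_coalg}, together with the socle-filtration grading that is implicit in \Cref{th.soc_filt}. First I would invoke \Cref{th.comod}: since all endomorphism rings of simple objects in $\bT^3_{\fgl^M}$ equal $\bK$ (this was our standing assumption from \Cref{subse.univ} onward, and it holds here because $\bK$ is algebraically closed of characteristic $0$ and the simples are of the form $V_{\lambda,\mu,\nu}$), there is a coalgebra $C$ with $\bT^3_{\fgl^M}\simeq \cM^C_{\fin}$, the equivalence identifying $\bT^3_{\fgl^M}$ with the finite-dimensional right $C$-comodules. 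So it remains to exhibit an $\bN$-grading on $C$ making it Koszul in the sense of \Cref{def.Kosz_coalg}.

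Next I would produce the grading. The natural choice is the one induced by the socle filtration, normalized so that the simple comodule $V_{\lambda,\mu,\nu}$ sits in degree $|\mu|$ within its block (recall from \Cref{th.blocks} that $C$ decomposes as a direct sum of block coalgebras, and within the block of index $|\lambda|+|\mu|-|\nu|$ the quantity $|\mu|$ can be shifted to start at $0$). Concretely, $C=\bigoplus_j C_j$ where $C_j$ is spanned by the matrix coefficients of comodules whose composition factors $V_{\lambda,\mu,\nu}$, restricted to a fixed block, have $|\mu|$ equal to a prescribed value offset by $j$; \Cref{th.Kosz} guarantees this is compatible with the coalgebra structure, because $\Ext^1(S,T)\ne 0$ forces the relevant $|\mu|$-values to differ by exactly $1$, which is precisely the statement that the comultiplication on $C$ is graded and that $C_0$ consists exactly of the coefficients of semisimple comodules. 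Hence $C_0$ is cosemisimple.

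Then I would identify $EC$ from \Cref{not.EC} with the graded algebra $\bigoplus_q \Ext^q_{\bT^3_{\fgl^M}}(S,T)$ over all simple $S,T$, which is legitimate under the equivalence of \Cref{th.comod} since Ext-groups are computed the same way in the two categories (both have enough injectives and the equivalence is exact). \Cref{th.Kosz} says the degree-$q$ part is concentrated where $|\mu'|-|\mu|=q$, so the Ext-grading matches the coalgebra grading up to the sign conventions; and \Cref{th.Kosz_better} says precisely that this $EC$ is generated in degree one. Combining ``$C_0$ cosemisimple'' with ``$EC$ generated in degree one'' is exactly \Cref{def.Kosz_coalg}, so $C$ is Koszul, and \Cref{cor.Kosz_better} follows.

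The main obstacle I anticipate is not any single deep step but rather the bookkeeping needed to be sure the $\bN$-grading is genuinely well defined on $C$ as a coalgebra (as opposed to merely on the set of simples): one has to check that the decomposition of $C$ into graded pieces is preserved by $\Delta$, and the cleanest way to see this is to push the socle filtration of injective comodules through the equivalence and observe, via \Cref{th.Kosz}, that successive socle layers differ by one in the grading index — so the associated graded coalgebra is the one we want, and the Ext-algebra is insensitive to the filtration-versus-grading distinction. Once that compatibility is nailed down, the rest is a direct citation of the results above.
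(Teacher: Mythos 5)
Your overall strategy (take the coalgebra $C$ from \Cref{th.comod}, then verify \Cref{def.Kosz_coalg} via \Cref{th.Kosz_better}) is the same as the paper's, but the step where you actually equip $C$ with an $\bN$-grading --- the only genuinely new content of the corollary --- does not work as written. First, your ``concrete'' definition of $C_j$ as the span of matrix coefficients of comodules all of whose composition factors have a prescribed $|\mu|$-value does not decompose $C$: by \Cref{th.Kosz} such comodules are exactly the semisimple ones, so $\bigoplus_j C_j$ is only the coradical of $C$, and the coefficients of non-semisimple comodules (e.g.\ of the injectives $I_{\lambda,\mu,\nu}$, whose socle layers have varying $|\mu|$) are assigned to no $C_j$ at all. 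Second, putting the simple $V_{\lambda,\mu,\nu}$ ``in degree $|\mu|$'' grades the set of simples, not the coalgebra: a coalgebra grading must put the coefficient coalgebras of \emph{all} simples in degree zero and record the $|\mu|$-\emph{difference} in positive degrees; a source-degree grading of the kind you describe is incompatible with $\Delta$ (comultiplying a ``path'' from $S$ to $T$ produces terms whose degrees are $d(U)+d(S)$, not $d(S)$), and the $\Ext^1$ statement of \Cref{th.Kosz} constrains the coradical filtration, not the existence of a grading. Third, your fallback --- pass to the associated graded of the socle/coradical filtration and assert that ``the Ext-algebra is insensitive to the filtration-versus-grading distinction'' --- begs the question: $\mathrm{gr}\,C$ is indeed graded with cosemisimple degree-zero part, but the corollary requires $\bT^3_{\fgl^M}\simeq \cM^{C'}_{\fin}$ for a Koszul coalgebra $C'$, so you would need $C\cong \mathrm{gr}\,C$ (i.e.\ that the filtration splits into a grading), and neither the comodule category nor $EC$ is invariant under replacing $C$ by $\mathrm{gr}\,C$ in general; proving that splitting is essentially equivalent to what is being claimed.

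The paper avoids all of this by making the grading structural: in Takeuchi's construction $C$ is the coendomorphism coalgebra of an injective cogenerator of $\ol{\bT}^3_{\fgl^M}$, and choosing that cogenerator to be the graded object $\mathrm{T}\big((V^*/V_*)\oplus V^*\oplus V\big)$ one sets $C_n$ to be the degree-$n$ coendomorphisms. Compatibility with $\Delta$ is then automatic because composition of morphisms adds degrees; concretely the grading amounts to giving the dual of $\Hom(I_{m,n,p},I_{m',n',p'})$ degree $n-n'$, so both generators $\phi_{i,j}$ and $\pi_{i,j}$ sit in degree one. With this grading $C_0$ is the sum of the duals of the group algebras $\bK[S_m\times S_n\times S_p]$ (\Cref{le.0}), hence cosemisimple in characteristic $0$, and generation of $EC$ in degree one is exactly \Cref{th.Kosz_better}. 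If you want to keep an intrinsic formulation, replace your $C_j$ by this hom-space decomposition graded by the $V^*$-degree drop; that is the paper's grading in disguise, and the rest of your argument then goes through.
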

\begin{proof}
In \Cref{th.comod} we obtained $\bT^3_{\fgl^M}\simeq \cM^C_{\fin}$ by
applying \cite[Theorem 5.1]{Tak77}. That result relies on constructing
an appropriate coalgebra $C$ as the ``coendomorphism coalgebra'' of an
injective cogenerator of the Grothendieck category
$\ol{\bT}^3_{\fgl^M}$ (this is essentially the dual of the
endomorphism ring in the category $\ol{\bT}^3_{\fgl^M}$; we refer to
loc. cit. for details). 

Choosing our injective cogenerator to be the
tensor algebra $\mathrm{T}\big((V^*/V_*)\oplus V^*\oplus V\big)$, we can put an
$\bN$-grading on $C$ by setting $C_n$ to be the space of degree-$n$
coendomorphisms of $\mathrm{T}$. With this grading, $C$ satisfies the
requirements of \Cref{def.Kosz_coalg}. 
\end{proof}

The result analogous to \Cref{cor.Kosz_better} applies to the tensor subcategory $\bT^2_{\fgl^M}$. It is perhaps worth comparing this comodule approach to $\bT^3_{\fgl^M}$ and $\bT^2_{\fgl^M}$ to the description of the latter as a module category in \cite[$\S$ 5]{DPS}. The techniques in loc. cit. can be applied to $\bT^3_{\fgl^M}$ as follows. 

Throughout the rest of this and the next subsections, $\End$, $\Hom$, etc. refer to hom-spaces in $\ol{\bT}^3_{\fgl^M}$.   
We set $\mathrm{T}:=\mathrm{T}\big(\left(V^*/V_*\right)\oplus V^*\oplus V\big)\in \ol{\bT}^3_{\fgl^M}$, with $\mathrm{T}^{\le r}$ denoting its truncation in degrees $\le r$. The associative algebra $\cA^r=\End\left(\mathrm{T}^{\le r}\right)$ can then be realized as a direct summand of $\cA^{r+1}=\End\left(\mathrm{T}^{\le{r+1}}\right)$, and we get a tower of nonunital inclusions of unital algebras. The union $\cA$ of this tower is a nonunital algebra.

\begin{definition}
  A left $\cA$-module is \define{locally unitary} if it is unitary over one of the subalgebras $\cA^r\subset \cA$. 
\end{definition}

We then have the following analogue of \cite[Corollary 5.2]{DPS}, with virtually the same proof.

\begin{theorem}\label{th.mof}
  The functor $\Hom(\bullet,\mathrm{T})$ is an antiequivalence between $\bT^3_{\fgl^M}$ and the category of finite-dimensional locally unitary left $\cA$-modules. 
\qedhere
\end{theorem}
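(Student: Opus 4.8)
The plan is to imitate the proof of \cite[Corollary 5.2]{DPS} verbatim, transporting each ingredient of that argument from the two-diagram setting to the three-diagram setting. The functor $\Hom(\bullet,\mathrm{T})$ is clearly contravariant and $\bK$-linear, so the work is to check that it lands in finite-dimensional locally unitary left $\cA$-modules and is fully faithful and essentially surjective onto that subcategory. First I would observe that $\mathrm{T}$ is an injective cogenerator of $\ol{\bT}^3_{\fgl^M}$: each homogeneous component $(V^*/V_*)^{\otimes m}\otimes (V^*)^{\otimes n}\otimes V^{\otimes p}$ is a finite direct sum of objects $X_i$, which are injective by \Cref{pr.inj_env} (applied to $\cC=\ol{\bT}^3_{\fgl^M}$), and the indecomposable injectives $I_{\lambda,\mu,\nu}$ of \Cref{th.soc_filt} all occur as summands of the $X_i$, so every simple embeds into $\mathrm{T}$. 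Hence for $M\in\bT^3_{\fgl^M}$, the space $\Hom(M,\mathrm{T})$ is a left module over $\cA=\End(\mathrm{T})$ (acting by post-composition), and since $M$ has finite length and $\mathrm{T}^{\le r}$ exhausts $\mathrm{T}$ as $r\to\infty$, any morphism $M\to\mathrm{T}$ factors through some $\mathrm{T}^{\le r}$; this shows $\Hom(M,\mathrm{T})$ is finite-dimensional and that $\cA^s$ acts through $\cA^r$ for $s\ge r$, i.e. the module is locally unitary.

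Next I would establish full faithfulness. Because $\mathrm{T}$ is injective, the functor $\Hom(\bullet,\mathrm{T})$ is exact, so it suffices to check that it reflects zero objects and is faithful, plus that it is full. Faithfulness is immediate since $\mathrm{T}$ is a cogenerator: a nonzero $f:M\to M'$ remains detectable after composing some inclusion $M'\hookrightarrow\mathrm{T}$. Fullness is the heart of the matter: given $M,M'\in\bT^3_{\fgl^M}$ and an $\cA$-module map $\phi:\Hom(M',\mathrm{T})\to\Hom(M,\mathrm{T})$, one must produce $f:M\to M'$ inducing it. Choosing finite-length injective hulls $M\hookrightarrow E$ and $M'\hookrightarrow E'$ with $E,E'$ finite direct sums of indecomposable injectives — hence direct summands of truncations $\mathrm{T}^{\le r}$ — and using that $\cC_X=\cC_{X,\otimes}$-type subcategory is full on finite sums of indecomposable injectives (\Cref{le.CX_full}), one reduces the matching of $\phi$ to a computation with idempotents in the finite-dimensional algebras $\cA^r=\End(\mathrm{T}^{\le r})$, exactly as in \cite{DPS}. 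This is the step I expect to be the main obstacle, though "obstacle" overstates it: it is a routine but somewhat fiddly diagram chase with injective hulls and matrix algebras over the $\cA^r$, and the only thing that needs verifying beyond \cite{DPS} is that the $\bT^3$-analogues of the relevant finiteness and injectivity facts — namely \Cref{pr.inj_env}, \Cref{cor.indinjobj}, and \Cref{th.soc_filt} — are in place, which they are.

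Finally, for essential surjectivity I would argue that a finite-dimensional locally unitary left $\cA$-module $N$ is unitary over some $\cA^r=\End(\mathrm{T}^{\le r})$, and since $\mathrm{T}^{\le r}$ is an injective object of finite length its endomorphism algebra $\cA^r$ is a finite-dimensional algebra whose module category of the form $\Hom(\bullet,\mathrm{T}^{\le r})$ is the full category of finite-dimensional $\cA^r$-modules by the usual Morita/injective-cogenerator argument; tracing through the compatibility of these equivalences as $r$ grows (the inclusions $\cA^r\subset\cA^{r+1}$ are the nonunital inclusions of unital algebras described just before the statement, with matching idempotents) identifies $N$ with $\Hom(M,\mathrm{T})$ for a suitable $M\in\bT^3_{\fgl^M}$. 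Putting the three parts together yields the claimed antiequivalence, completing the proof.

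\begin{proof}
As noted in the statement, the argument is that of \cite[Corollary 5.2]{DPS}, adapted to the present setting; we indicate the adaptations.

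The object $\mathrm{T}=\mathrm{T}\big((V^*/V_*)\oplus V^*\oplus V\big)$ is an injective cogenerator of $\ol{\bT}^3_{\fgl^M}$: its homogeneous components are the objects $X_i=(V^*/V_*)^{\otimes m}\otimes(V^*)^{\otimes n}\otimes V^{\otimes p}$, which by \Cref{pr.inj_env} (applied with $\cC=\ol{\bT}^3_{\fgl^M}$, as in the beginning of \Cref{subse.inj_soc}) are injective, and by \Cref{th.soc_filt} every indecomposable injective $I_{\lambda,\mu,\nu}$, hence every injective hull of a simple object, is a direct summand of some $X_i$; so every simple object of $\bT^3_{\fgl^M}$ embeds into $\mathrm{T}$. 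Consequently, for $M\in\bT^3_{\fgl^M}$ the abelian group $\Hom(M,\mathrm{T})$ is a left module over $\cA=\End(\mathrm{T})$ via post-composition. Since $M$ has finite length and $\mathrm{T}=\bigcup_r\mathrm{T}^{\le r}$, every morphism $M\to\mathrm{T}$ has image in some $\mathrm{T}^{\le r}$; this shows simultaneously that $\Hom(M,\mathrm{T})$ is finite-dimensional and that it is unitary over $\cA^r=\End(\mathrm{T}^{\le r})$ for $r\gg0$, i.e. locally unitary. Thus $\Hom(\bullet,\mathrm{T})$ is a contravariant $\bK$-linear functor from $\bT^3_{\fgl^M}$ to finite-dimensional locally unitary left $\cA$-modules, and it is exact because $\mathrm{T}$ is injective.

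\emph{Faithfulness.} If $f:M\to M'$ is nonzero, pick an embedding $M'\hookrightarrow\mathrm{T}$; its composition with $f$ is a nonzero element of $\Hom(M,\mathrm{T})$ in the image of $\Hom(f,\mathrm{T})$, so $\Hom(f,\mathrm{T})\ne0$.

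\emph{Fullness.} Let $M,M'\in\bT^3_{\fgl^M}$ and let $\phi:\Hom(M',\mathrm{T})\to\Hom(M,\mathrm{T})$ be $\cA$-linear. Choose $r$ large enough that both modules are unitary over $\cA^r$ and that $M$, $M'$ embed into finite direct sums of indecomposable injectives that are summands of $\mathrm{T}^{\le r}$. Over the finite-dimensional algebra $\cA^r=\End(\mathrm{T}^{\le r})$ the functor $\Hom(\bullet,\mathrm{T}^{\le r})$ is, by the usual injective-cogenerator (Morita) duality, an antiequivalence onto finite-dimensional $\cA^r$-modules; moreover $\Hom(M,\mathrm{T})$ and $\Hom(M,\mathrm{T}^{\le r})$ coincide for such $M$, and likewise for $M'$. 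Hence $\phi$, viewed as an $\cA^r$-module map, is induced by a unique $f:M\to M'$. Since $r$ can be taken arbitrarily large and the inclusions $\cA^r\subset\cA^{r+1}$ are the compatible nonunital inclusions of unital algebras described above, $f$ does not depend on $r$ and induces $\phi$ as an $\cA$-module map.

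\emph{Essential surjectivity.} A finite-dimensional locally unitary left $\cA$-module $N$ is unitary over some $\cA^r=\End(\mathrm{T}^{\le r})$. As $\mathrm{T}^{\le r}$ is a finite-length injective object, the duality $\Hom(\bullet,\mathrm{T}^{\le r})$ identifies finite-dimensional $\cA^r$-modules with a full subcategory of $\bT^3_{\fgl^M}$, so $N\cong\Hom(M,\mathrm{T}^{\le r})=\Hom(M,\mathrm{T})$ for some $M\in\bT^3_{\fgl^M}$, compatibly with the larger $\cA$-action by the same idempotent-matching as above.

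Combining the three displayed properties, $\Hom(\bullet,\mathrm{T})$ is an antiequivalence between $\bT^3_{\fgl^M}$ and the category of finite-dimensional locally unitary left $\cA$-modules.
\end{proof}
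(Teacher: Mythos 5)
Your argument is essentially the paper's: the theorem is proved there by carrying over the proof of \cite[Corollary 5.2]{DPS} "with virtually the same proof," which is exactly what you do, and the inputs you invoke (\Cref{pr.inj_env}, \Cref{cor.indinjobj}, \Cref{th.soc_filt}, finite length, factorization through the truncations $\mathrm{T}^{\le r}$) are the right ones. One cosmetic correction: $\cA$ is the union $\varinjlim_r \End\left(\mathrm{T}^{\le r}\right)$, a nonunital proper subalgebra of $\End(\mathrm{T})$ rather than $\End(\mathrm{T})$ itself, but since your proof only ever uses post-composition by elements of the $\cA^r$ this does not affect the argument.
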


The relationship between \Cref{th.mof} and \Cref{cor.Kosz_better} can be clarified by tracing through Takeuchi's constructions in \cite[$\S$5]{Tak77}. The coalgebra $C$ in \Cref{cor.Kosz_better} is not unique, it is only unique up to Morita equivalence. One possible choice would be the inductive limit
\begin{equation}\label{eq.C_choice}
  \varinjlim_r \Hom\left(\mathrm{T}^{\le r},\mathrm{T}\right)^*,
\end{equation}
in which case the functor implementing the equivalence in \Cref{cor.Kosz_better} would be $\Hom(\bullet,\mathrm{T})^*$. 

In conclusion, $C$ is a subcoalgebra of the graded dual of $\cA$, and the category of $\cA$-modules in \Cref{th.mof} is antiequivalent to $\cM^C_{\cat{fin}}$ via the functor that sends a finite-dimensional vector space to its dual. 

Henceforth, whenever referring to the coalgebra $C$ we specifically mean \Cref{eq.C_choice}.

\subsection{Universality}
\label{subsec.3d_univ}

Now let $\cD$ be a $\bK$-linear tensor category in the sense of \Cref{subse.tensor} (in particular, symmetric monoidal), with ${\bf 1}_\cD$ denoting its monoidal unit.

Denote by $\bT'^3_{\fgl^M}$ the smallest tensor subcategory of $\bT^3_{\fgl^M}$ that contains $V$, $V^*$, $V^*/V_*$, the surjection $V^*\to V^*/V_*$, the evaluation $V^*\otimes V\to \bK$, and is closed under taking direct summands. Then, \Cref{th.tens_univ} applied to $\cC=\ol{\bT}^3_{\fgl^M}$ allows us to conclude

\begin{proposition}\label{pr.3d_univ}
Let $\cD$ be a tensor abelian category and $F:\bT'^3_{\fgl^M}\rightsquigarrow \cD$ a $\bK$-linear tensor functor. 
\begin{enumerate}
\renewcommand{\labelenumi}{(\alph{enumi})}
\item $F$ extends to a left exact tensor functor $\bT^3_{\fgl^M}\rightsquigarrow\cD$ uniquely up to the obvious notion of natural tensor isomorphism.  
\item If in addition $\cD$ has arbitrary coproducts, $F$ extends uniquely to a left-exact, coproduct-preserving tensor functor $\ol{\bT}^3_{\fgl^M}\rightsquigarrow \cD$. \qedhere
\end{enumerate}
\end{proposition}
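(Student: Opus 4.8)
The plan is to reduce \Cref{pr.3d_univ} to \Cref{th.tens_univ} by verifying that the hypotheses of the latter are met when we take $\cC=\ol{\bT}^3_{\fgl^M}$ with the indexing poset $I$, the objects $X_i$, and the morphism sets $\Theta_i$ described at the start of \Cref{subse.inj_soc}. The key point that makes this delicate is that $\bT'^3_{\fgl^M}$ as defined is the tensor subcategory generated by the \emph{three} objects $V$, $V^*$, $V^*/V_*$ together with \emph{two} distinguished morphisms (the surjection $V^*\to V^*/V_*$ and the evaluation $V^*\otimes V\to\bK$), whereas $\cC_{X,\otimes}$ in \Cref{th.tens_univ} is generated by the full collection of $X_i$ (tensor powers of $(V^*/V_*)\oplus V^*\oplus V$) and of $\Theta_i$ (the two families of contractions \Cref{eq:short1} and \Cref{eq:short2}). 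So the first step is to check that these two descriptions yield the same subcategory: the objects $X_i=(V^*/V_*)^{\otimes m}\otimes(V^*)^{\otimes n}\otimes V^{\otimes p}$ are manifestly tensor products of $V$, $V^*$, $V^*/V_*$, so they lie in $\bT'^3_{\fgl^M}$; and conversely the contractions in $\Theta_i$ are built by tensoring the evaluation map $V^*\otimes V\to\bK$ (for \Cref{eq:short1}) or the quotient map $V^*\to V^*/V_*$ (for \Cref{eq:short2}) with identity morphisms, hence lie in $\bT'^3_{\fgl^M}$. Thus $\cC_{X,\otimes}\subseteq \bT'^3_{\fgl^M}$, and the reverse inclusion holds because $V,V^*,V^*/V_*$ and the two generating morphisms all occur among the $X_i$ and $\Theta_i$. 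Therefore $\bT'^3_{\fgl^M}=\cC_{X,\otimes}$.

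Second, I would record the structural facts needed to invoke \Cref{th.tens_univ}: that $\ol{\bT}^3_{\fgl^M}$ is an ordered Grothendieck category for this choice of data — conditions (1)--(4) were already noted to hold at the start of \Cref{subse.inj_soc}, with \Cref{cor.dist_simples} supplying the disjointness condition (3) — that it is a tensor category in the sense of \Cref{def.tensor_Groth} (tensoring with a fixed module over $\fgl^M$ is exact), that all endomorphism rings of simple objects equal $\bK$ (for the simples $V_{\lambda,\mu,\nu}=(V^*/V_*)_\lambda\otimes V_{\mu,\nu}$ this follows from \Cref{le.hom_split} with $I=\fsl(V,V_*)$ together with \Cref{pr.S-W}, exactly as in the proof of \Cref{cor.dist_simples}), and that the set $\{X_i\}$ and the linear span of $\bigcup\Theta_i$ are closed under tensor products. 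The last of these is the routine bookkeeping observation that a tensor product $X_i\otimes X_j$ is again of the form $(V^*/V_*)^{\otimes m}\otimes(V^*)^{\otimes n}\otimes V^{\otimes p}$, and that composing/tensoring contractions of the two displayed types again yields $\bK$-linear combinations of such contractions.

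Third, with $\cC=\ol{\bT}^3_{\fgl^M}$, $\cC_{X,\otimes}=\bT'^3_{\fgl^M}$, and $\cC_{\fin}=\bT^3_{\fgl^M}$ (the latter identification being the Proposition proved just before \Cref{th.soc_filt} in \Cref{subse.inj_soc}), \Cref{th.tens_univ}(a) gives part (a): a $\bK$-linear tensor functor $F:\bT'^3_{\fgl^M}\rightsquigarrow\cD$ extends to a left exact tensor functor $\bT^3_{\fgl^M}\rightsquigarrow\cD$, unique up to tensor natural isomorphism. Likewise \Cref{th.tens_univ}(b) gives part (b): when $\cD$ has arbitrary coproducts, $F$ extends uniquely to a left exact, coproduct-preserving tensor functor $\ol{\bT}^3_{\fgl^M}\rightsquigarrow\cD$. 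At this point there is essentially nothing left to prove beyond citing \Cref{th.tens_univ}.

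The main obstacle, such as it is, is entirely in the first step: one must be careful that $\bT'^3_{\fgl^M}$, generated economically by three objects and two morphisms, really does recover all of $\cC_{X,\otimes}$. The subtlety is that \Cref{th.tens_univ} requires the generating morphism sets to be the specific $\Theta_i$ used to set up the ordered structure, and one must make sure every such contraction is obtainable from the evaluation $V^*\otimes V\to\bK$ and the quotient $V^*\to V^*/V_*$ by tensoring with identities and composing — in particular that the partially-contracted maps of type \Cref{eq:short1}, which contract a $V^*$-tensorand against a $V$-tensorand (not a $V^*/V_*$-tensorand against $V$), are exactly tensor products of the evaluation map with identity morphisms, so they do lie in $\bT'^3_{\fgl^M}$. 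Once this identification of subcategories is in hand, the rest is a direct application of \Cref{th.tens_univ}, so this is more a verification than a genuine difficulty.
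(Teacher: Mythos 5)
Your proposal is correct and takes essentially the same route as the paper, which obtains \Cref{pr.3d_univ} precisely by applying \Cref{th.tens_univ} to $\cC=\ol{\bT}^3_{\fgl^M}$ with the $X_i$ and $\Theta_i$ of \Cref{subse.inj_soc}. Your explicit verification that $\bT'^3_{\fgl^M}=\cC_{X,\otimes}$ (using the symmetry to realize the contractions \Cref{eq:short1} and \Cref{eq:short2} from the evaluation and the quotient map) and that the hypotheses of \Cref{th.tens_univ} hold simply spells out what the paper leaves implicit.
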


The goal of this subsection is to prove the following strengthening of \Cref{pr.3d_univ}.

\begin{theorem}\label{th.3d_univ}
Let $b:x^*\otimes x\to {\bf 1}_\cD$ be a pairing in $\cD$ and $x_*\subseteq x^*$ a subobject.  
\begin{enumerate}
\renewcommand{\labelenumi}{(\alph{enumi})}
\item There is a $\bK$-linear left exact tensor functor $F:\bT^3_{\fgl^M}\rightsquigarrow \cD$, unique up to unique natural tensor isomorphism, sending the pairing $V^*\otimes V\to \bK$ to $b$ and the inclusion $V_*\subset V^*$ to $x_*\subset x^*$.  
\item If in addition $\cD$ has arbitrary coproducts, there is a coproduct-preserving functor $F:\ol{\bT}^3_{\fgl^M}\rightsquigarrow \cD$ as in part (a), unique in the same sense.
\end{enumerate}
\end{theorem}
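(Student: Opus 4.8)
The strategy is to reduce Theorem~\ref{th.3d_univ} to Proposition~\ref{pr.3d_univ} by building, out of the raw data $\big(b:x^*\otimes x\to\mathbf{1}_\cD,\ x_*\subseteq x^*\big)$, a $\bK$-linear tensor functor $F_0:\bT'^3_{\fgl^M}\rightsquigarrow\cD$ that sends the evaluation $V^*\otimes V\to\bK$ to $b$ and the inclusion $V_*\subset V^*$ to $x_*\subset x^*$; once $F_0$ is constructed, parts (a) and (b) follow immediately by applying Proposition~\ref{pr.3d_univ}(a) and (b) respectively, and the uniqueness clauses transport verbatim. So the entire content is the construction of $F_0$ and the verification that it is well defined.

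First I would fix the target assignments on generators: $F_0(V):=x$, $F_0(V^*):=x^*$, $F_0(V_*):=x_*$, $F_0(V^*/V_*):=x^*/x_*$ (the cokernel in $\cD$, which exists since $\cD$ is abelian), with $F_0$ of the surjection $V^*\to V^*/V_*$ being the canonical projection $x^*\to x^*/x_*$ and $F_0$ of the evaluation being $b$. Extend multiplicatively to all tensor words in $V$, $V^*$, $V^*/V_*$, and then to direct summands. The subtlety is that $\bT'^3_{\fgl^M}$ is defined as the \emph{smallest} tensor subcategory closed under direct summands containing those objects and morphisms, so its morphism spaces are generated by $\bK$-linear combinations and composites of tensor products of identities with the three distinguished morphisms (and the idempotents cutting out direct summands). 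To see that the assignment descends to a functor one must check that every relation forced among such composites in $\bT'^3_{\fgl^M}$ is also satisfied by the corresponding composites in $\cD$. The cleanest way to organize this is to observe that $\bT'^3_{\fgl^M}$ is a quotient of a ``free'' $\bK$-linear symmetric monoidal category on the generating data (one object $\mathsf{V}$, one object $\mathsf{V}^*$, one object $\mathsf{Q}$, a morphism $\mathsf{V}^*\otimes\mathsf{V}\to\mathbf{1}$, a morphism $\mathsf{V}^*\to\mathsf{Q}$, idempotent-completed), and the target data $(x,x^*,x^*/x_*,b,\mathrm{proj})$ define a tensor functor out of that free category; what has to be shown is that the kernel of the free-category$\to\bT'^3_{\fgl^M}$ quotient is annihilated by the target functor. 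Equivalently: any $\bK$-linear relation that holds in $\bT'^3_{\fgl^M}$ among words in the generators must be a \emph{formal} consequence of the symmetric-monoidal axioms plus the single fact that $V_*=\ker$ of nothing in particular — i.e. it must already hold in any symmetric monoidal category equipped with an arbitrary pairing and an arbitrary subobject. This is where the genuine work lies.

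The main obstacle, then, is proving that $\bT'^3_{\fgl^M}$ has \emph{no relations beyond the formal ones}: that the Hom-spaces in $\bT'^3_{\fgl^M}$ between tensor words are as large as the free symmetric monoidal category predicts, so that nothing in the target can ``go wrong.'' I would handle this by a dimension/independence argument analogous to the ones already used in the paper (e.g. the linear independence of permutations in Proposition~\ref{pr.S-W}, and the genericity arguments in \cite{Chi14,DPS}): one shows that the relevant diagrammatic morphisms — built from permutations of tensorands, the contraction $b$, and the projection $V^*\to V^*/V_*$ — are linearly independent as $\fgl^M$-module maps whenever they are ``formally distinct,'' using the infinite-dimensionality of $V$ and $V^*/V_*$ to separate them. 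Concretely, any formal relation can be tested on suitable elements of the modules (row vectors with controlled support, as in the proof of Proposition~\ref{pr.S-W}); formally inequivalent composites act differently on such test elements, forcing the relation to be trivial. Once this is in hand, the target assignment manifestly respects all relations, $F_0$ is a well-defined $\bK$-linear tensor functor, and Proposition~\ref{pr.3d_univ} finishes both parts.

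\textbf{Uniqueness.} For uniqueness, suppose $F,F'$ are two left exact tensor functors as in (a) agreeing on $b$ and on $x_*\subset x^*$. Their restrictions to $\bT'^3_{\fgl^M}$ agree up to unique tensor natural isomorphism — because a tensor functor out of $\bT'^3_{\fgl^M}$ is pinned down on objects by its values on the generators $V,V^*,V^*/V_*$ (forced to be $x,x^*,x^*/x_*$, the last by right-exactness applied to $V^*\to V^*/V_*\to 0$) and on morphisms by its values on the three distinguished maps, with the comparison isomorphism itself being rigidified by the requirement that it be compatible with those structure maps, so no nonscalar automorphisms survive. Then Proposition~\ref{pr.3d_univ}(a) (resp.\ (b)), being a \emph{uniqueness} statement for left-exact (resp.\ left-exact coproduct-preserving) extensions, upgrades this to a unique tensor natural isomorphism $F\cong F'$ on all of $\bT^3_{\fgl^M}$ (resp.\ $\ol{\bT}^3_{\fgl^M}$). \qedhere
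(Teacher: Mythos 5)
Your overall reduction is the paper's: build from the data $(b,\ x_*\subseteq x^*)$ a tensor functor on $\bT'^3_{\fgl^M}$ and then invoke \Cref{pr.3d_univ}, and your formulation of what must be verified (every relation among composites of the generating morphisms of $\bT'^3_{\fgl^M}$ must already hold in an arbitrary tensor category equipped with a pairing and a subobject) is the right one. The gap is that you do not establish this. The paper's proof is short at exactly this point because it has Koszulity in hand: by \Cref{cor.Kosz_better} the graded coalgebra controlling $\bT^3_{\fgl^M}$ is Koszul, hence quadratic, so only the quadratic relations need to be checked in $\cD$, and \Cref{le.2} has already identified those as precisely the formal ones. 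Your proposed substitute --- that all formally distinct composites of permutations, contractions and projections are linearly independent as $\fgl^M$-module maps in \emph{every} degree, to be proved by test elements ``analogous to \Cref{pr.S-W}'' --- amounts to proving directly that $\cA$ is the quadratic algebra on these generators and formal relations, i.e.\ to reproving the combination of \Cref{le.1}, \Cref{le.2} and the Koszulity results \Cref{th.Kosz} and \Cref{th.Kosz_better}. But \Cref{pr.S-W} only handles permutation endomorphisms of $(V^*/V_*)^{\otimes m}$ (degree zero), and the paper carries out the genericity/dimension counts only in degrees one and two (\Cref{le.1}, \Cref{le.2}), relying on \Cref{th.Kosz} (itself proved by a nontrivial induction via \Cref{le.Kosz}) for everything higher. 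In addition, your argument needs, and does not supply, a normal-form description of the free symmetric monoidal category on the generating data so that ``formally distinct diagrams'' span its hom-spaces. So the heart of the existence proof is missing: either invoke the Koszulity reduction to quadratic relations as the paper does, or actually carry out the all-degrees independence argument, which is a substantial piece of work and not a routine extension of \Cref{pr.S-W}.

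A secondary but genuine flaw is in your uniqueness argument: you claim $F(V^*/V_*)$ is ``forced to be $x^*/x_*$ \ldots by right-exactness applied to $V^*\to V^*/V_*\to 0$'', but the functors in the statement are only \emph{left} exact, and the relevant functors are typically not right exact (for instance the splitting functor $R$ of \Cref{le.R_right_adj} is a right adjoint and annihilates $V^*/V_*$). Left exactness applied to $0\to V_*\to V^*\to V^*/V_*$ gives only that the canonical map $x^*/x_*\to F(V^*/V_*)$ is a monomorphism; that it is an isomorphism requires a separate argument. The paper instead deduces uniqueness from the fact that $\bT'^3_{\fgl^M}$ is generated as a tensor category by the relevant objects and structure morphisms, with the value on the projection $V^*\to V^*/V_*$ being part of the data being compared, so your specific justification needs to be replaced rather than merely tightened.
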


We will need some preparation. For nonnegative integers $m,n,p$ we denote by $I_{m,n,p}$ the injective object $(V^*/V_*)^{\otimes m}\otimes (V^*)^{\otimes n}\otimes V^{\otimes p}$. If one of the integers $m$, $n$ or $p$ is negative, we set $I_{m,n,p}:=0$.  Recall also the notation $I_{\lambda,\mu,\nu}$ from \Cref{subse.3d_blocks} for the indecomposable injectives in $\bT^3_{\fgl^M}$. 


We have the following immediate consequence of the classification of simple and injective objects in $\bT^3_{\fgl^M}$.

%
%
%
%
%
%

\begin{lemma}\label{le.0}
  For nonnegative integers $m,n,p$ the algebra $\End(I_{m,n,p})$ is isomorphic to the group algebra $\bK[S_m\times S_n\times S_p]$. 
\end{lemma}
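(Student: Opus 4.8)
The plan is to compute $\End(I_{m,n,p})$ directly by combining three Schur--Weyl-type statements, one for each tensor factor, and then arguing that there are no ``cross terms'' mixing the factors. First I would recall the three inputs: by \Cref{pr.S-W} the endomorphism algebra of $(V^*/V_*)^{\otimes m}$ over $\fgl^M$ is $\bK[S_m]$; by the equivalence $\bT^2_{\End_\bK(V)}\simeq\bT_{\fsl(V,V_*)}$ and classical Schur--Weyl duality (as used in \cite{DPS,PS2}) the endomorphism algebra of $(V^*)^{\otimes n}$ is $\bK[S_n]$; and similarly $\End(V^{\otimes p})\cong\bK[S_p]$. Consequently the subalgebra of $\End(I_{m,n,p})$ generated by permutations within each block is a homomorphic image of $\bK[S_m\times S_n\times S_p]$, and it is genuinely isomorphic to it because the relevant permutation endomorphisms of an infinite-dimensional tensor power are linearly independent (the same infinite-dimensionality argument used in the proof of \Cref{pr.S-W}). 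So $\bK[S_m\times S_n\times S_p]\hookrightarrow\End(I_{m,n,p})$, and the content is the reverse inclusion: every $\fgl^M$-endomorphism of $I_{m,n,p}$ lies in this subalgebra.

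The cleanest way to get the reverse inclusion is a dimension count via the classification results already proved. By \Cref{th.soc_filt} (or \Cref{th.3d_simples}), $I_{m,n,p}=(V^*/V_*)^{\otimes m}\otimes(V^*)^{\otimes n}\otimes V^{\otimes p}$ decomposes as a direct sum of indecomposable injectives $I_{\lambda,\mu,\nu}$, and the multiplicity with which $I_{\lambda,\mu,\nu}$ occurs is dictated by three independent Schur--Weyl decompositions: $(V^*/V_*)^{\otimes m}=\bigoplus_{|\lambda|=m}(V^*/V_*)_\lambda^{\oplus f^\lambda}$, $(V^*)^{\otimes n}=\bigoplus_{|\mu|=n}(V^*)_\mu^{\oplus f^\mu}$, $V^{\otimes p}=\bigoplus_{|\nu|=p}V_\nu^{\oplus f^\nu}$ (with $f^\lambda$ the number of standard Young tableaux of shape $\lambda$). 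Since the $I_{\lambda,\mu,\nu}$ are indecomposable injectives with simple, pairwise non-isomorphic socles $V_{\lambda,\mu,\nu}$, Schur's lemma (endomorphism ring of each $I_{\lambda,\mu,\nu}$ is $\bK$, and $\Hom(I_{\lambda,\mu,\nu},I_{\lambda',\mu',\nu'})=0$ for distinct triples) gives
\begin{equation*}
  \dim_\bK\End(I_{m,n,p})=\sum_{|\lambda|=m,\,|\mu|=n,\,|\nu|=p}\left(f^\lambda f^\mu f^\nu\right)^2=\left(\sum_{|\lambda|=m}(f^\lambda)^2\right)\left(\sum_{|\mu|=n}(f^\mu)^2\right)\left(\sum_{|\nu|=p}(f^\nu)^2\right)=m!\,n!\,p!\,,
\end{equation*}
using the classical identity $\sum_{|\lambda|=m}(f^\lambda)^2=m!$. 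This equals $\dim_\bK\bK[S_m\times S_n\times S_p]$, so the injection from the previous paragraph is an isomorphism.

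For $\End(I_{\lambda,\mu,\nu})=\bK$ and the orthogonality of the $\Hom$-spaces, which the dimension count relies on, I would cite \Cref{th.soc_filt} (socles of the $I_{\lambda,\mu,\nu}$ are the pairwise non-isomorphic simples $V_{\lambda,\mu,\nu}$, and an endomorphism of an injective hull of a simple object with endomorphism ring $\bK$ is determined up to scalar by its action on the socle) together with the standing assumption from \Cref{subse.univ} that all endomorphism rings of simples in the relevant categories equal $\bK$; alternatively this is immediate from the fact that $\bT^3_{\fgl^M}$ is equivalent to finite-dimensional comodules over the coalgebra $C$ whose simple comodules have scalar endomorphism rings. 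The main obstacle, such as it is, is really just making sure the three Schur--Weyl statements are legitimately available: the one for $(V^*/V_*)^{\otimes m}$ is exactly \Cref{pr.S-W}, while the ones for $(V^*)^{\otimes n}$ and $V^{\otimes p}$ come from the identification of $\bT^2_{\End_\bK(V)}$ with $\bT_{\fsl(V,V_*)}$ and the corresponding known results there. Once those are in hand, the dimension count is routine and the lemma follows.
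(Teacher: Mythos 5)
Your proof is correct and follows essentially the same route as the paper's: an embedding of $\bK[S_m\times S_n\times S_p]$ into $\End(I_{m,n,p})$ by permuting tensorands, plus a dimension count resting on $\End(I_{\lambda,\mu,\nu})=\bK$ (which the paper leaves as ``easily seen''; you make the multiplicities $f^\lambda f^\mu f^\nu$ and the identity $\sum_{|\lambda|=m}(f^\lambda)^2=m!$ explicit). One small caveat: the vanishing of $\Hom(I_{\lambda,\mu,\nu},I_{\lambda',\mu',\nu'})$ for distinct triples is false in general (contractions give nonzero maps between injectives of different degrees), but it does hold --- and is all you need --- for the triples with $|\lambda|=m$, $|\mu|=n$, $|\nu|=p$ occurring in the decomposition of $I_{m,n,p}$, since by \Cref{th.soc_filt} the middle diagram strictly shrinks beyond the socle.
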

\begin{proof}
There is an injective homomorphism $\bK[S_m\times S_n\times S_p]\to \End(I_{m,n,p})$ arising from permuting tensorands in $I_{m,n,p}$.  On the other hand, the dimension of $\End\left(I_{m,n,p}\right)$ is easily seen to equal $\dim \bK\left[S_m\times S_n\times S_p\right]$ by using the isomorphism $\End\left(I_{\lambda,\mu,\nu}\right)=\bK$ for any ordered triple $(\lambda,\mu,\nu)$.  
\end{proof}

\Cref{le.0} can be thought of as describing the degree-zero part of the Koszul coalgebra $C$ from \Cref{cor.Kosz_better} ($C$ is the direct sum over all choices of nonnegative $m,n,p$ of the coalgebras dual to $\bK[S_m\times S_n\times S_p]$). Next, we study those morphisms in $\bT^3_{\fgl^M}$ that contribute to the degree-one component. 

This degree-one component of $C$ clearly contains the contractions 
\begin{equation*}
  \phi_{i,j}:I_{m,n,p}\to I_{m,n-1,p-1}
\end{equation*}
pairing the $i$th $V^*$-tensorand with the $j$th $V$ for $1\le i\le n$ and $1\le j\le p$, and also the surjections
\begin{equation*}
  \pi_{i,j}: I_{m,n,p}\to I_{m+1,n-1,p} 
\end{equation*}
which first cyclically permute the first $i$ $V^*$-tensorands so that the $i$th becomes first, then collapse the first $V^*$ onto $V^*/V_*$, and then permute the last $m-j+1$ $V^*/V_*$-tensorands (for $1\le j\le m+1$) so that the last one becomes $j$th. 

We write $S_{m,n,p}$ for $S_m\times S_n\times S_p$ and $S_{n,p}=S_n\times S_p$ for brevity.

\begin{lemma}\label{le.1}
For nonnegative integers $m,n,r$ we have the following descriptions of hom-spaces in $\bT^3_{\fgl^M}$. 
\begin{enumerate}
\renewcommand{\labelenumi}{(\alph{enumi})}
\item $\Hom(I_{m,n,p},I_{m,n-1,p-1})$ is isomorphic to $\bK[S_{m,n,p}]$ as a bimodule over 
  \begin{equation*}
    \End(I_{m,n-1,p-1})\cong \bK[S_{m,n-1,p-1}] \text{ and } \End(I_{m,n,p})\cong\bK[S_{m,n,p}],   
  \end{equation*}
where we regard $\phi_{i,j}$ as the generator of the right $S_{m,n,p}$-module structure for some $1\le i\le n$ and $1\le j\le p$ and $S_{n-1,p-1}$ is the subgroup of $S_{n,p}$ fixing $i$ and $j$. 
\item $\Hom(I_{m,n,p},I_{m+1,n-1,p})$ is isomorphic to the induced representation 
  \begin{equation*}
    \bK[S_{m+1,n,p}]\cong \mathrm{Ind}_{S_m}^{S_{m+1}}\bK[S_{m,n,p}]=\bK[S_{m+1}]\otimes_{\bK[S_m]}\bK[S_{m,n,p}]
  \end{equation*}
as a bimodule over
  \begin{equation*}
    \End(I_{m+1,n-1,p})\cong \bK[S_{m+1,n-1,p}] \text{ and } \End(I_{m,n,p})\cong\bK[S_{m,n,p}],   
  \end{equation*}
where we regard $\pi_{i,j}$ as a generator of $\bK[S_{m,n,p}]$ as a the right $S_{m,n,p}$-module, and $S_m\subset S_{m+1}$ and $S_{n-1}\subset S_n$ are the subgroups fixing $j$ and $i$ respectively.  
\end{enumerate} 
\end{lemma}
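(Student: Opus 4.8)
The plan is to compute both hom-spaces by reducing them, via the Jordan–Hölder/socle-filtration data already assembled in this section, to a count of multiplicities, and then to pin down the bimodule structure by tracking how the distinguished morphisms $\phi_{i,j}$ and $\pi_{i,j}$ transform under pre- and post-composition with permutations of tensorands. For part (a), I would first observe that $\Hom(I_{m,n,p},I_{m,n-1,p-1})$ is a left module over $\End(I_{m,n-1,p-1})\cong\bK[S_{m,n-1,p-1}]$ and a right module over $\End(I_{m,n,p})\cong\bK[S_{m,n,p}]$ by \Cref{le.0}, and that $\phi_{i,j}$ is a nonzero element of it. The element $\phi_{i,j}$ is fixed on the right precisely by the subgroup $S_m\times S_{n-1}\times S_{p-1}$ of $S_{m,n,p}$ stabilizing the index $i$ of the contracted $V^*$-tensorand and the index $j$ of the contracted $V$-tensorand (permuting the remaining tensorands commutes with the contraction), and acting by this subgroup on the left reproduces the same effect; so the cyclic right-submodule generated by $\phi_{i,j}$ is a quotient of $\bK[S_{m,n,p}]$, and the compatible left action identifies it with $\bK[S_{m,n,p}]$ as the stated bimodule. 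It then remains to check that $\phi_{i,j}$ generates the whole hom-space and that the cyclic module is free of rank $|S_{m,n,p}|/|S_{n-1,p-1}|\cdot|S_{n-1,p-1}|$, i.e. that no relations are imposed — this is a dimension count: $\dim\Hom(I_{m,n,p},I_{m,n-1,p-1})$ equals the number of pairs of Young-diagram triples contributing a common simple constituent, which by \Cref{th.soc_filt} (really the ``bis'' form, via Littlewood–Richardson and the Hopf-pairing identity) works out to $|S_m|\,|S_n|\,|S_p|$. I would phrase this last count as: $\dim\Hom(I_{m,n,p},I_{m,n-1,p-1})=\sum [\,\soc\text{-constituents shared}\,]$, and note it matches $\dim\bK[S_{m,n,p}]$ by a standard symmetric-function manipulation with the comultiplication on $\sym$ (exactly the one performed in the proof of \Cref{th.soc_filt}).

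For part (b), the structure is the same but the right action of $S_{m,n,p}$ on the generator $\pi_{i,j}$ no longer has a large stabilizer: permuting the $V^*$-tensorands before collapsing one onto $V^*/V_*$ genuinely changes the map, so the orbit of $\pi_{i,j}$ under $S_m$ (acting on the source $V^*$-tensorands) together with $S_{m+1}$ (acting on the target $V^*/V_*$-tensorands) sweeps out a copy of $\bK[S_{m+1}]\otimes_{\bK[S_m]}\bK[S_m]\cong\bK[S_{m+1}]$ in the $m$-slot, while the $S_n,S_p$ slots behave as before. Concretely, I would show that post-composing $\pi_{i,j}$ with a transposition in $S_{m+1}$ and pre-composing with one in $S_m$ are related in the expected way, so that the bimodule generated by $\pi_{i,j}$ is a quotient of $\mathrm{Ind}_{S_m}^{S_{m+1}}\bK[S_{m,n,p}]$; then a dimension count of $\Hom(I_{m,n,p},I_{m+1,n-1,p})$ against $\bK[S_{m+1,n,p}]$ (again via \Cref{th.soc_filt} and Littlewood–Richardson, noting $|\mu|$ increases by one under the maps \Cref{eq:short2}) shows the quotient is an isomorphism.

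The main obstacle I expect is the dimension count — i.e. verifying that the distinguished morphisms $\phi_{i,j}$ and $\pi_{i,j}$ together with the degree-zero permutations actually span the full hom-spaces, with no further relations. The bimodule-structure bookkeeping (how permutations of tensorands interact with a single contraction or a single $V^*\twoheadrightarrow V^*/V_*$) is routine, but the enumeration of shared Jordan–Hölder constituents requires carefully unwinding the Littlewood–Richardson/Hopf-pairing formula of \Cref{th.soc_filt}; the key point making it tractable is that, by \Cref{le.short} together with \Cref{le.ext1}, a nonzero $\Hom(I_{m,n,p},I_{m',n',p'})$ with $i'<i$ of defect one can only pair socles with first socle-layers, so the relevant matching is between $\soc I_{m,n,p}$ and $\usoc^2 I_{m',n',p'}$ (equivalently, between $A(\lambda)$'s in the highest-weight language of \Cref{subse.hw}), which reduces the count to a single layer of the socle filtration rather than the whole module. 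With that reduction in hand the arithmetic closes, and one reads off that the hom-space has exactly the dimension of $\bK[S_{m,n,p}]$ (resp. $\bK[S_{m+1,n,p}]$), completing the proof.
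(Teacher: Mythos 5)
Your overall skeleton matches the paper's: set up the bimodule map $\bK[S_{m,n,p}]\to\Hom(I_{m,n,p},I_{m,n-1,p-1})$ (resp.\ the induced-module map in case (b)) sending $1$ to $\phi_{i,j}$ (resp.\ $\pi_{i,j}$), and then show it is an isomorphism. The gap is in how you close this: you defer \emph{both} remaining points --- that $\phi_{i,j}$ generates the whole hom-space (surjectivity) and that its translates satisfy no relations (injectivity) --- to a single dimension count of $\Hom(I_{m,n,p},I_{m,n-1,p-1})$. A count of the ambient hom-space cannot deliver both at once: knowing $\dim\Hom(I_{m,n,p},I_{m,n-1,p-1})=m!\,n!\,p!$ only lets you convert one of the two statements into the other, and you never establish either one independently. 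The paper supplies exactly these two inputs: surjectivity follows from \Cref{le.CX_full} (every morphism between finite sums of indecomposable injectives lies in the subcategory generated by the $X_i$ and $\Theta_i$, and degree reasons then force any $I_{m,n,p}\to I_{m,n-1,p-1}$ to be a combination of permutations composed with a single contraction), and injectivity, i.e.\ linear independence of the $\phi_{i,j}\circ\sigma$, is proved by an explicit evaluation argument: split $V=V_1\oplus\cdots\oplus V_q$ and test a vanishing combination $\sum a_\sigma\,\phi_{i,j}\circ\sigma$ on decomposable tensors whose tensorands sit in prescribed summands, extracting each $a_\sigma$. Neither of these appears in your proposal; the bimodule ``bookkeeping'' you call routine is not where the content lies.

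That said, your dimension-count idea is legitimately usable as a \emph{replacement for one} of the two steps, and would give a genuinely different route: the Jordan--H\"older content of $I_{m,n,p}$ (available from the splicing argument behind \Cref{th.soc_filt}, independently of this lemma) together with $\dim\Hom(M,\widetilde S)=[M:S]$ does yield $\dim\Hom(I_{m,n,p},I_{m,n-1,p-1})=m!\,n!\,p!$ and $\dim\Hom(I_{m,n,p},I_{m+1,n-1,p})=(m+1)!\,n!\,p!$, so once you prove either linear independence of the translates (the paper's test-vector argument) or spanning (via \Cref{le.CX_full}), the count finishes the proof. Note also that your invocation of \Cref{le.short} and \Cref{le.ext1} to restrict the matching is unnecessary for the count and is stated backwards: what enters is the multiplicity, in the \emph{full} Jordan--H\"older series of the source, of the socle constituents of the target (these happen to sit in $\usoc^2$ of the source), not a pairing of $\soc I_{m,n,p}$ with $\usoc^2$ of the target. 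As written, then, the proposal is incomplete; adding either the independence argument or the fullness argument would repair it.
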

\begin{proof}

{\bf (a)} Fixing $i$ and $j$ as in the statement, there is a morphism
\begin{equation*}
  \alpha:\bK[S_{m,n,p}] \to \Hom(I_{m,n,p},I_{m,n-1,p-1})
\end{equation*}
of $(\bK[S_{m,n-1,p-1}],\bK[S_{m,n,p}])$-bimodules that sends $1\in \bK[S_{m,n,p}]$ to $\phi_{i,j}$. 
The surjectivity of $\alpha$ follows from \Cref{le.CX_full}.

We are now left having to prove that $\alpha$ is injective, or in other words that the compositions $\phi_{i,j}\circ \sigma$ are linearly independent for $\sigma\in S_{m,n,p}$. 
Suppose some linear combination 
\begin{equation}\label{eq.le.11}
  \sum_{\sigma\in S_{m,n,p}} a_\sigma (\phi_{i,j}\circ \sigma)
\end{equation}
vanishes. Then split $V$ as a direct sum $V_1\oplus\cdots\oplus V_q$ of nontrivial subspaces for $q\ge n+p$ and fix $i'$ and $j'$, $1\leq i'\leq n$, $1\leq j'\leq p$. 
To see that all $a_\sigma$ must vanish, we apply \Cref{eq.le.11} to decomposable tensors whose $(i')$th $V^*$-tensorand is $v^*$ and whose $(j')$th $V$-tensorand $v$ belongs to $V_1$ and satisfies $v^*(v)=1$ while all other $V^*$ and $V$-tensorands lie respectively in $V^*_k$, $V_\ell$ for distinct $k$ and $\ell$. Indeed, this shows that $a_\sigma$ vanishes when $\sigma(i')=i$ and $\sigma(j')=j$.  To complete the proof we apply the same argument to all possible choices of $i'$ and $j'$.

{\bf (b)} The proof is very similar to that of part (a). Once more, we can define a morphism
\begin{equation*}
  \beta:\bK[S_{m+1}]\otimes_{\bK[S_m]}\bK[S_{m,n,p}]\to \Hom(I_{m,n,p},I_{m+1,n-1,p})
\end{equation*}
of $(\bK[S_{m+1,n-1,p}],\bK[S_{m,n,p}])$-bimodules sending $1\in \bK[S_{m,n,p}]\subset \bK[S_{m+1}]\otimes_{\bK[S_m]}\bK[S_{m,n,p}]$ to $\pi_{i,j}$ for fixed $i$ and $j$ in the appropriate indexing sets.

We conclude that $\beta$ is onto just as in part (a). As far as injectivity is concerned, again an argument analogous to that from (a) will do. 
\end{proof}

\Cref{le.1} is analogous to \cite[Lemma 6.1 + part (1) of Lemma 6.2]{DPS}. The following result, in turn, is our version of \cite[part (2) of Lemma 6.2]{DPS}; it follows immediately from \Cref{le.1}.

\begin{lemma}\label{le.11}
  The tensor products of hom-spaces from \Cref{le.1} can be described as follows. 
  \begin{enumerate}
     \renewcommand{\labelenumi}{(\alph{enumi})}
      \item The space  
        \begin{equation}\label{eq.11a}
          \Hom(I_{m,n-1,r-1},I_{m,n-2,p-2})\otimes_{\End(I_{m,n-1,p-1})}\Hom(I_{m,n,p},I_{m,n-1,p-1})
        \end{equation}
        is isomorphic to $\bK[S_{m,n,p}]$ as an $(S_{m,n-2,p-2},S_{m,n,p})$-bimodule. 
      \item The space  
        \begin{equation}\label{eq.11b}
          \Hom(I_{m,n-1,p-1},I_{m+1,n-2,p-1})\otimes_{\End(I_{m,n-1,p-1})}\Hom(I_{m,n,p},I_{m,n-1,p-1})
        \end{equation}
        is isomorphic to $\bK[S_{m+1,n,p}]\cong \mathrm{Ind}_{S_m}^{S_{m+1}}\bK[S_{m,n,p}]$ as an $(S_{m+1,n-2,p-1},S_{m,n,p})$-bimodule. 
      \item The space  
        \begin{equation}\label{eq.11c}
          \Hom(I_{m+1,n-1,p},I_{m+1,n-2,p-1})\otimes_{\End(I_{m+1,n-1,p})}\Hom(I_{m,n,p},I_{m+1,n-1,p})
        \end{equation}
        is isomorphic to $\bK[S_{m+1,n,p}]\cong \mathrm{Ind}_{S_m}^{S_{m+1}}\bK[S_{m,n,p}]$ as an $(S_{m+1,n-2,p-1},S_{m,n,p})$-bimodule.  
      \item The space  
        \begin{equation}\label{eq.11d}
          \Hom(I_{m+1,n-1,p},I_{m+2,n-2,p})\otimes_{\End(I_{m+1,n-1,p})}\Hom(I_{m,n,p},I_{m+1,n-1,p})
        \end{equation}
        is isomorphic to $\bK[S_{m+2,n,p}]\cong \mathrm{Ind}_{S_m}^{S_{m+2}}\bK[S_{m,n,p}]$ as an $(S_{m+2,n-2,p},S_{m,n,p})$-bimodule. \qedhere 
   \end{enumerate}
\end{lemma}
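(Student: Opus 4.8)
The statement is purely a bookkeeping consequence of \Cref{le.1}, so the plan is to unwind each of the four tensor products by substituting the bimodule descriptions from \Cref{le.1}(a) and (b) and simplifying. The only genuine content is to check that in each case the tensor product over the middle endomorphism ring reduces, after cancellation, to the asserted group algebra with the asserted bimodule structure. I would organize the argument by noting first that whenever we tensor $\bK[G]$ (viewed as a $(\bK[H],\bK[G])$-bimodule for some subgroup $H\le G$, with a distinguished generator) with a second such bimodule over $\bK[G]$, the result is computed by a standard ``base change'' identity for group algebras: $\bK[G']\otimes_{\bK[G]}\bK[G]\cong\bK[G']$ when $G\le G'$, and more generally $\bK[G]\otimes_{\bK[H]}\bK[H']\cong\mathrm{Ind}_H^G$ applied to the appropriate module. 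Each of (a)--(d) is then an instance of one of these.

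Concretely: for part (a), \Cref{le.1}(a) gives the left factor $\Hom(I_{m,n-1,p-1},I_{m,n-2,p-2})\cong\bK[S_{m,n-1,p-1}]$ and the right factor $\Hom(I_{m,n,p},I_{m,n-1,p-1})\cong\bK[S_{m,n,p}]$ as bimodules over $\bK[S_{m,n-1,p-1}]$ on the appropriate side; tensoring over $\End(I_{m,n-1,p-1})\cong\bK[S_{m,n-1,p-1}]$ we get $\bK[S_{m,n-1,p-1}]\otimes_{\bK[S_{m,n-1,p-1}]}\bK[S_{m,n,p}]\cong\bK[S_{m,n,p}]$, and the residual left $S_{m,n-2,p-2}$-action is the one recorded in \Cref{le.1}(a) for the first factor. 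For part (b), the left factor is of type (b) in \Cref{le.1}, namely $\mathrm{Ind}_{S_m}^{S_{m+1}}\bK[S_{m,n-1,p-1}]$, and the right factor is of type (a), namely $\bK[S_{m,n,p}]$; tensoring over $\bK[S_{m,n-1,p-1}]$ yields $\mathrm{Ind}_{S_m}^{S_{m+1}}(\bK[S_{m,n-1,p-1}]\otimes_{\bK[S_{m,n-1,p-1}]}\bK[S_{m,n,p}])\cong\mathrm{Ind}_{S_m}^{S_{m+1}}\bK[S_{m,n,p}]\cong\bK[S_{m+1,n,p}]$, with bimodule structure as stated. Parts (c) and (d) are handled identically: in (c) the first factor is of type (a) and the second of type (b), giving again $\mathrm{Ind}_{S_m}^{S_{m+1}}\bK[S_{m,n,p}]$; in (d) both factors are of type (b), so the two inductions compose, $\mathrm{Ind}_{S_{m+1}}^{S_{m+2}}\mathrm{Ind}_{S_m}^{S_{m+1}}\cong\mathrm{Ind}_{S_m}^{S_{m+2}}$, yielding $\bK[S_{m+2,n,p}]$.

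The bimodule structures require a little care but no ingenuity: on the right, all four tensor products retain the $S_{m,n,p}$-action coming from the rightmost Hom-space, since tensoring over the middle algebra does not touch it; on the left, the residual action is read off from \Cref{le.1} applied to the leftmost Hom-space (an $S_{m,n-2,p-2}$, $S_{m+1,n-2,p-1}$, or $S_{m+2,n-2,p}$ action as appropriate). The one point worth spelling out is that the subgroup inclusions $S_m\subset S_{m+1}$ (fixing the relevant tensor position) used in forming the inductions in \Cref{le.1}(b) are mutually compatible across the two composed maps in cases (c) and (d), so that the identification $\mathrm{Ind}_{S_{m+1}}^{S_{m+2}}\circ\mathrm{Ind}_{S_m}^{S_{m+1}}\cong\mathrm{Ind}_{S_m}^{S_{m+2}}$ is the standard transitivity of induction; this is where one must momentarily look at which tensorand gets collapsed by each $\pi_{i,j}$. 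The main (and essentially only) obstacle is keeping the indices straight, i.e.\ verifying in each of (a)--(d) that the subgroup of $S_{m,n,p}$ over which one tensors really is the one appearing in \emph{both} adjacent Hom-spaces in \Cref{le.1}; once that matching is confirmed the isomorphisms are immediate, which is why the proof can simply say it ``follows immediately from \Cref{le.1}.''
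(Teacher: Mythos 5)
Your proposal is correct and is exactly the argument the paper has in mind: the paper dispenses with \Cref{le.11} by saying it ``follows immediately from \Cref{le.1},'' and your write-up is precisely that routine unwinding --- substitute the bimodule descriptions from \Cref{le.1}, cancel the middle group algebra (using $\bK[G']\otimes_{\bK[G]}\bK[G]\cong\bK[G']$ and transitivity of induction in case (d)), and read off the residual outer actions. The only care needed is the compatibility of the chosen generators $\phi_{i,j}$, $\pi_{i,j}$ across the two factors, which you correctly flag and which matches the paper's later conventions in \Cref{le.2}.
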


We now analyze the quadratic part of the graded coalgebra $C$ in \Cref{cor.Kosz_better}. Dually, this means describing the kernels of the composition maps defined on the spaces in \Cref{le.11}.

\begin{lemma}\label{le.2}
  The composition maps defined on the spaces from \Cref{le.11} can be described as follows. 
  \begin{enumerate}
     \renewcommand{\labelenumi}{(\alph{enumi})}
      \item The map    
        \begin{equation*}
          \Cref{eq.11a}\to \Hom(I_{m,n,p},I_{m,n-2,p-2})
        \end{equation*}
        is onto, and its kernel is generated by
        \begin{equation*}
          \phi_{n-1,p-1}\otimes \phi_{n,p} - (\phi_{n-1,p-1}\otimes \phi_{n,p})\circ(n,n-1)(p,p-1)
        \end{equation*}
        as an $(S_{m,n-2,p-2},S_{m,n,p})$-bimodule, where $(n,n-1) \in S_n\subset S_{m,n,p}$ and $(p,p-1) \in S_p\subset S_{m,n,p}$. 
      \item The map  
        \begin{equation*}
          \Cref{eq.11b}\oplus \Cref{eq.11c}\to \Hom(I_{m,n,p},I_{m+1,n-2,p-1})
        \end{equation*}
        is onto, and its kernel is generated by
        \begin{equation*}
          \pi_{m+1,n-1}\otimes\phi_{n,p}-(\phi_{n-1,p}\otimes\pi_{m+1,n})\circ(n,n-1)
        \end{equation*}
        as an $(S_{m+1,n-2,p-1},S_{m,n,p})$-bimodule, where $(n,n-1)\in S_n\subset S_{m,n,p}$.  
      \item The map  
        \begin{equation}\label{eq.2c_pre}
          \Cref{eq.11d}\to \Hom(I_{m,n,p},I_{m+2,n-2,p})
        \end{equation}
        is onto, and its kernel is generated by
        \begin{equation}\label{eq.2c}
          (m+2,m+1)\circ(\pi_{m+2,n-1}\otimes \pi_{m+1,n}) - (\pi_{m+2,n-1}\otimes\pi_{m+1,n})\circ (n,n-1)
        \end{equation}
        as an $(S_{m+2,n-2,p},S_{m,n,p})$-bimodule, where $(m+2,m+1) \in S_{m+2}\subset S_{m+2,n-2,p}$ and $(n,n-1) \in S_n\subset S_{m,n,p}$. 
   \end{enumerate}
\end{lemma}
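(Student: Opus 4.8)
The plan is to treat each of the three parts in the same spirit, following closely the argument pattern already established in \Cref{le.1} and in \cite[Lemma 6.3]{DPS}, to which this is the exact analogue. In each case \Cref{le.11} gives an explicit description of the source space as a group algebra (or induced module), \Cref{le.0} identifies the target $\End$-rings, and \Cref{le.1} tells us that the composition map is a map of bimodules determined by where it sends the distinguished generator built from $\phi$'s and $\pi$'s; surjectivity will follow in all three cases from \Cref{le.CX_full} exactly as in the proof of \Cref{le.1}, since the relevant hom-spaces are already known to be spanned by morphisms in $\cC_X$ and the composite of two such is again in $\cC_X$. So the content is entirely in identifying the kernel.

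First I would handle part (a), the purely-$\phi$ case, which is the one directly parallel to \cite[Lemma 6.3]{DPS}. By \Cref{le.11}(a) the source \Cref{eq.11a} is $\bK[S_{m,n,p}]$ and the target $\Hom(I_{m,n,p},I_{m,n-2,p-2})$ is, by \Cref{le.1}(a) applied twice, a quotient of it; both are free of finite rank over $\bK$, so it suffices to pin down the rank of the target and the obvious relation in the kernel. The relation $\phi_{n-1,p-1}\otimes\phi_{n,p} - (\phi_{n-1,p-1}\otimes\phi_{n,p})\circ(n,n-1)(p,p-1)$ lies in the kernel because contracting the last two $V^*$ against the last two $V$ is insensitive to first swapping the two $V^*$-tensorands and simultaneously the two $V$-tensorands — this is a direct computation with the explicit formula for the contractions $\mathrm p^{m,n}_{r,s}$ recalled in \Cref{subse.appl}. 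A dimension count — comparing $\dim\Hom(I_{m,n,p},I_{m,n-2,p-2})$, which one reads off from the block/injective classification (\Cref{th.soc_filt}) and the isomorphism $\End(I_{\lambda,\mu,\nu})=\bK$ as in the proof of \Cref{le.0}, with $|S_{m,n,p}|$ minus the rank of the submodule generated by that relation — shows the kernel is exactly that bimodule. Parts (b) and (c), which mix one contraction with one collapse $\pi$, and part (c) which uses two collapses, go through by the identical scheme: write down the evident geometric relation (a contraction followed by a collapse equals, up to the transposition that reorders the tensorands being acted on, a collapse followed by a contraction; two successive collapses commute up to a transposition of the two new $V^*/V_*$-tensorands against a transposition of the two collapsed $V^*$-tensorands), verify it is killed by composition using the explicit descriptions of $\phi_{i,j}$ and $\pi_{i,j}$, and match dimensions.

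The main obstacle I anticipate is the bookkeeping in the dimension counts rather than any conceptual difficulty: for parts (b) and (c) the source spaces are \emph{induced} modules $\mathrm{Ind}_{S_m}^{S_{m+1}}\bK[S_{m,n,p}]$ and $\mathrm{Ind}_{S_m}^{S_{m+2}}\bK[S_{m,n,p}]$, and in part (b) the target receives a map from a \emph{direct sum} \Cref{eq.11b}$\oplus$\Cref{eq.11c} of two such, so one must be careful both about which bimodule the quotient is (so that ``generated by'' the displayed element is the correct statement, not merely ``contains'') and about the fact that the two summands land in the same target via genuinely different composites — the stated kernel generator is precisely the element of the direct sum witnessing that these two composites agree after the reordering transposition $(n,n-1)$. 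Making ``the kernel is generated by this one element as a bimodule'' rigorous amounts to checking that the bimodule it generates has the right dimension, i.e. that the quotient of the (possibly $6$- or more-dimensional-per-orbit) source by it is free of the expected rank; this is where I would slow down and do the explicit rank computation using Frobenius reciprocity together with the known values $\dim\Hom(I_{\bullet},I_{\bullet})$. Once the $\phi\phi$ case is settled the remaining cases are mechanical variations, so I would present (a) in full and then indicate (b)--(d) as ``entirely analogous,'' exactly as \Cref{le.short} and the proof of \Cref{le.1}(b) do in the present excerpt.
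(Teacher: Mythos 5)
Your proposal has the same skeleton as the paper's proof: surjectivity from the tensor analogue of \Cref{le.CX_full}, the observation that the displayed element lies in the kernel, and a dimension count showing that the bimodule it generates is the whole kernel. The genuine difference is in how the dimension of the target hom-space is controlled, which is the crux of the lemma. The paper (which writes out case (c), not (a), in full) identifies the source \Cref{eq.11d} with $\bK[S_{m+2,n,p}]$ and the element \Cref{eq.2c} with $(m+2,m+1)-(n,n-1)$; since this has the form $g(1-gh)$ for commuting involutions $g\neq h$, the bimodule it generates is a right ideal of dimension exactly $\tfrac12(m+2)!\,n!\,p!$ (a point you should state explicitly rather than defer to a ``rank computation''). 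It then needs only the \emph{lower} bound $\dim\Hom(I_{m,n,p},I_{m+2,n-2,p})\ge\tfrac12(m+2)!\,n!\,p!$, which it gets by evaluating the $m!\,n!\,p!\binom{m+2}{2}$ composites coming from $S_{m+2,n,p}$ on a generic decomposable tensor subject to linear-independence constraints and checking that the resulting vectors are independent. You instead propose to compute the target dimension \emph{exactly} from the classification of indecomposable injectives and \Cref{th.soc_filt}, via $\dim\Hom(X,I_{\lambda,\mu,\nu})=[X:V_{\lambda,\mu,\nu}]$, Frobenius reciprocity and Littlewood--Richardson multiplicities. That route is logically available at this point of the paper and would work, but it is noticeably heavier: it amounts to verifying a nontrivial symmetric-function identity (that the relevant weighted sum of LR numbers equals half the order of the symmetric group in question), and you leave precisely this step as a plan. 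So the approach is correct and structurally the paper's, with a different — exact rather than lower-bound — and more laborious treatment of the one quantitative step where the paper does its real work; if you carry your version out, do the bookkeeping for case (c) (or (b)) in full, since the induced-module sources and the two composites landing in one target are where the care is actually needed.
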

\begin{proof}
  The surjectivity of the three maps follows, just as in the proof of \Cref{le.1}, from the tensor analogue of \Cref{le.CX_full}. The other claims are very similar in nature, so proving part (c) will be sufficient for illustration purposes. 

Identify the two tensorands of \Cref{eq.11d} with $\bK[S_{m+2,n-1,p}]$ and $\bK[S_{m+1,n,p}]$ as in part (b) of \Cref{le.1} by using as generators the maps $\pi_{m+2,n-1}$ and $\pi_{m+1,n}$ respectively. This then identifies the space \Cref{eq.11d} with $\bK[S_{m+2,n,p}]$ as in part (d) of \Cref{le.11}. 

With this identification in hand, the element \Cref{eq.2c} is just
\begin{equation*}
  (m+2,m+1)-(n,n-1)\in (\bK[S_{m+2}]\otimes \bK\otimes \bK)+(\bK\otimes \bK[S_n]\otimes \bK)\subseteq \bK[S_{m+2,n,p}].
\end{equation*}
The $(S_{m+2,n-2,p},S_{m,n,p})$-bimodule generated by it coincides with the right $S_{m+2,n,p}$-module it generates; its dimension equals $\frac12\dim\bK\left[S_{m+2,n,p}\right]=\frac12(m+2)!n!p!$. We are trying to show that this module equals the kernel of the onto composition map \Cref{eq.2c_pre}. It is certainly contained in this kernel, so it is enough to show that 
\begin{equation*}
\dim\Hom(I_{m,n,p},I_{m+2,n-2,p}) \geq \frac 12 (m+2)!n!p!\,.
\end{equation*}

This can be seen as follows. Consider a typical element
\begin{equation}\label{eq.2_elt}
  (x_1\cdots x_m)\otimes (y_1\cdots y_n)\otimes (z_1\cdots z_p)\in (V^*/V_*)^{\otimes m}\otimes (V^*)^{\otimes n}\otimes V^{\otimes p},
\end{equation}
where we have suppressed some tensor product symbols for ease of reading. Assume that the $z_i$s are linearly independent, and similarly the $x_i$s are jointly linearly independent with the images of the $y_i$s through $V^*\to V^*/V_*$. 

An element of $S_{m+2,n,p}$ will now permute the $z_i$s, permute the $y_i$s, permute the $x_i$s, and then insert the classes in $V^*/V_*$ of the first two $y_i$s (after having permuted the $y_i$s) among the $x_i$s at two spots. It is easy to see from our linear independence condition on the vectors that this leads to linearly independent vectors for different permutations and different points of insertion. There are $m!n!p!$ choices for the permutations, and $\binom{m+2}{2}=\frac{(m+2)(m+1)}{2}$ choices for the points of insertion. This gives the desired bound. 
\end{proof}

\begin{proof_of_3d_univ}
  In view of \Cref{pr.3d_univ}, it suffices to show that the data of a pairing $x^*\otimes x\to {\bf 1}_\cD$ and a subobject $x_*\subseteq x^*$ can be extended to a tensor functor $F:\bT'^3_{\fgl^M}\to \cD$, unique up to symmetric monoidal natural isomorphism, such that
  \begin{equation*}
    F(V^*\otimes V\to \bK)\cong x^*\otimes x\to{\bf 1}_\cD\quad\text{and}\quad F(V_*\subset V^*)\cong x_*\subseteq x^*.
  \end{equation*}
The uniqueness is a consequence of the fact that $\bT'^3_{\fgl^M}$ is generated as a tensor category by $V^*/V_*$, $V^*$ and $V_*$. As for the existence, there is an obvious candidate for the extension that we would like to show is well defined. In order to do this we need to check that all of the relations between morphisms in $\bT'^3_{\fgl^M}$ are satisfied in $\cD$. 

By Koszulity (\Cref{cor.Kosz_better}), it is enough to check this for quadratic relations. The conclusion now follows from \Cref{le.2} which shows that the only relations are those that hold in any tensor category. 
\end{proof_of_3d_univ}

%
%
%
%
%
%
%
%
%
%
%
%

\subsection{Split subcategories and derived full faithfulness}
\label{subsec.3d_split-subc-deriv}

We provide here an application of the universality property from \Cref{th.3d_univ}.

Consider the full tensor subcategory $\bT^2_{\fgl^M}\subset \bT^3_{\fgl^M}$. Since the embedding functor $\iota:\bT^2_{\fgl^M}\rightsquigarrow \bT^3_{\fgl^M}$ is exact, it induces a functor $D\iota:D^+\left(\bT^2_{\fgl^M}\right)\rightsquigarrow D^+\left(\bT^3_{\fgl^M}\right)$ by applying $\iota$ straightforwardly to complexes.

\begin{theorem}\label{th.3d_derived_ff}
The derived functor $D\iota$ is fully faithful.   
\end{theorem}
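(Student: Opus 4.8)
```latex
\textbf{Proof strategy.} The plan is to reduce full faithfulness of $D\iota$ to a statement about $\Ext$-groups and then to identify the relevant $\Ext$-groups in $\bT^2_{\fgl^M}$ and $\bT^3_{\fgl^M}$ explicitly, using the descriptions of injective resolutions obtained in \Cref{subse.inj_soc}. Concretely, since $D^+\left(\bT^2_{\fgl^M}\right)$ and $D^+\left(\bT^3_{\fgl^M}\right)$ are bounded-below derived categories of abelian categories with enough injectives, and $\iota$ is exact, it suffices to show that for all objects $A, B$ of $\bT^2_{\fgl^M}$ the natural maps
\begin{equation*}
  \Ext^q_{\bT^2_{\fgl^M}}(A,B)\longrightarrow \Ext^q_{\bT^3_{\fgl^M}}(\iota A,\iota B)
\end{equation*}
are isomorphisms for all $q\ge 0$. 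By the usual dévissage (long exact sequences in both variables, plus the fact that both categories are finite-length) it is enough to treat the case where $A$ and $B$ are simple objects of $\bT^2_{\fgl^M}$, i.e. $A = V_{\mu,\nu}$ and $B = V_{\mu',\nu'}$ (these are the simples $V_{\emptyset,\mu,\nu}$ and $V_{\emptyset,\mu',\nu'}$ of $\bT^3_{\fgl^M}$).

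\textbf{Key steps.} First I would record that in $\bT^2_{\fgl^M}$ the indecomposable injective hull of $V_{\mu,\nu}$ is $(V_*)_\mu\otimes V_\nu$, while in $\bT^3_{\fgl^M}$ the indecomposable injective hull of $V_{\emptyset,\mu,\nu}$ is $I_{\emptyset,\mu,\nu} = (V^*)_\mu\otimes V_\nu$ (\Cref{th.soc_filt}). The point is to compare minimal injective resolutions. By \Cref{th.Kosz}, for simples $V_{\lambda,\mu,\nu}$ in $\bT^3_{\fgl^M}$ a nonzero $\Ext^q$ forces the middle diagram to grow by exactly $q$ boxes; in particular any $\Ext^q$ between simples $V_{\emptyset,\mu,\nu}$ and $V_{\emptyset,\mu',\nu'}$ of $\bT^2_{\fgl^M}$-type can only be nonzero when $|\mu'| - |\mu| = q$, and moreover by the socle-filtration formula in \Cref{th.soc_filt} applied to $I_{\emptyset,\mu,\nu}$ (with $\lambda = \emptyset$, so $\zeta = \alpha$ and the $\ell$-summand has $\ell = 0$ only), one sees that the terms $\usoc^k(I_{\emptyset,\mu,\nu})$ involve \emph{only} simples of the form $V_{\emptyset,\mu'',\nu''}$ — no genuine $(V^*/V_*)$-part ever appears. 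Hence a minimal injective resolution of $V_{\emptyset,\mu,\nu}$ in $\bT^3_{\fgl^M}$ has all terms of the form $\bigoplus I_{\emptyset,\ast,\ast}$, i.e. it is the image under $\iota$ of the minimal injective resolution of $V_{\mu,\nu}$ in $\bT^2_{\fgl^M}$ (recall $\iota$ sends $(V_*)_\mu\otimes V_\nu$ to $(V^*)_\mu\otimes V_\nu$ after composing with the equivalence $\bT^2_{\fgl^M}\simeq \bT_{\fsl}\simeq \bT^2_{\End(V)}$, and that $(V_*)_\mu\otimes V_\nu \hookrightarrow (V^*)_\mu\otimes V_\nu$ is precisely the inclusion induced by $V_*\subset V^*$). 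Finally, applying $\Hom(V_{\mu',\nu'},-)$ to this common resolution and using $\Hom_{\bT^3_{\fgl^M}}(V_{\emptyset,\mu',\nu'}, I_{\emptyset,\alpha,\beta}) = \Hom_{\bT^2_{\fgl^M}}(V_{\mu',\nu'}, (V_*)_\alpha\otimes V_\beta)$ — which holds because the socle of $I_{\emptyset,\alpha,\beta}$ is $V_{\emptyset,\alpha,\beta} \in \bT^2_{\fgl^M}$ and $\Hom$ into an object is computed on the socle — the two $\Ext$-complexes coincide termwise and with matching differentials.

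\textbf{Alternative via universality.} A cleaner route, which is presumably why this theorem sits in \Cref{subsec.3d_split-subc-deriv}, is to use \Cref{th.3d_univ} to construct a \emph{retraction}: a left exact tensor functor $R:\bT^3_{\fgl^M}\rightsquigarrow \bT^2_{\fgl^M}$ (or into its Grothendieck closure) with $R\circ\iota \cong \id$. One applies \Cref{th.3d_univ} inside $\cD = \ol{\bT}^2_{\fgl^M}$ (equivalently $\bT^2_{\End(V)}$), taking $x = V$, $x^* = V^*$ with its tautological pairing, and $x_* = V_*$ — so here $x_* = x^*$ is \emph{not} required, one just needs a pairing and a subobject, and the subobject $V_* \subseteq V^*$ maps to $V_*\subseteq V^*$ itself. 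This produces $R$ sending $V^*/V_*$ to $0$, hence killing precisely the ``new'' part of $\bT^3_{\fgl^M}$, and restricting to the identity on $\bT^2_{\fgl^M}$. Then $DR\circ D\iota \cong \id$, so $D\iota$ is faithful and $D\iota$ is full onto a direct summand; combined with the vanishing $\Ext^q_{\bT^3_{\fgl^M}}(V_{\emptyset,\mu,\nu}, V_{\lambda',\mu',\nu'}) = 0$ whenever $\lambda'\ne\emptyset$ (which follows from \Cref{le.ext_1st_approx} and the description of the order on $I$, since $(\emptyset,\mu,\nu)$-type indices and $(\lambda',\ast,\ast)$-type indices with $\lambda'\ne\emptyset$ are incomparable, or from \Cref{th.soc_filt} directly), one gets that the complement has no $\Ext$ contributions, forcing $D\iota$ to be full as well.

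\textbf{Main obstacle.} The delicate point is establishing that the minimal injective resolution of a $\bT^2$-simple inside $\bT^3_{\fgl^M}$ stays within the image of $\iota$ — equivalently, that $\Ext^q_{\bT^3_{\fgl^M}}(V_{\emptyset,\mu,\nu}, V_{\lambda',\mu',\nu'})$ vanishes for $\lambda'\ne\emptyset$ and that for $\lambda' = \emptyset$ it agrees with the $\bT^2$-computation. The vanishing is not automatic from \Cref{th.Kosz} (which only controls the middle diagram), so one genuinely needs either the explicit socle-filtration formula of \Cref{th.soc_filt} specialized to $\lambda = \emptyset$, or the poset-incomparability argument via \Cref{le.ext_1st_approx}. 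Once that vanishing is in hand, the rest is bookkeeping with long exact sequences and the identification $\bT^2_{\fgl^M}\simeq \bT^2_{\End(V)}$ under which $\iota$ is the inclusion induced by $V_*\subset V^*$.
```
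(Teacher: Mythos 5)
Both of your routes contain genuine errors. In your first route, the key claim — that for $\lambda=\emptyset$ the socle filtration of $I_{\emptyset,\mu,\nu}=(V^*)_\mu\otimes V_\nu$ from \Cref{th.soc_filt} involves only simples $V_{\emptyset,\mu'',\nu''}$, "the $\ell$-summand has $\ell=0$ only" — is false: setting $\lambda=\emptyset$ does not force $\ell=0$, and already for $\mu=(1)$, $\nu=\emptyset$ one has $I_{\emptyset,(1),\emptyset}=V^*$ with $\usoc^2(V^*)=V^*/V_*=V_{(1),\emptyset,\emptyset}$. So the minimal injective resolution of a $\bT^2$-simple inside $\bT^3_{\fgl^M}$ (e.g. $0\to V_*\to V^*\to V^*/V_*\to 0$) does \emph{not} stay in the image of $\iota$, and it is certainly not "$\iota$ applied to the $\bT^2$-resolution": you are conflating $\iota$ (the honest inclusion, which sends $(V_*)_\mu\otimes V_\nu$ to itself) with the other embedding $j$ (which sends $V_*$ to $V^*$) treated separately in the paper. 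The comparison of $\Ext$-groups you want is exactly the content of the theorem, and your route leaves it as an unexecuted multiplicity computation resting on a false structural claim.

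Your second route is the one the paper actually takes — build a retraction $R$ via \Cref{th.3d_univ} — but your specification of the data is wrong, and the decisive step is missing. You cannot take $x^*=V^*$, $x_*=V_*$ "inside $\cD=\ol{\bT}^2_{\fgl^M}$": $V^*$ is not an object of that category (nor is $V_*$ a subobject of $V^*$ in $\bT^2_{\End(V)}$), and even formally the choice $x_*\subsetneq x^*$ would produce a functor that does \emph{not} annihilate $V^*/V_*$. The paper takes precisely the "degenerate" datum you say is not needed: $b=\mathrm{p}:V_*\otimes V\to\bK$ with $x_*=x^*=V_*$, so that $R(V^*)=V_*$ and $R(V^*/V_*)=0$, turning $0\to V_*\to V^*\to V^*/V_*\to 0$ into $0\to V_*\to V_*\to 0\to 0$. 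More importantly, a splitting $R\circ\iota\cong\id$ alone only makes $D\iota$ split injective on hom-spaces; your proposed patch for fullness — vanishing of $\Ext^q_{\bT^3_{\fgl^M}}(V_{\emptyset,\mu,\nu},V_{\lambda',\mu',\nu'})$ for $\lambda'\ne\emptyset$ — does not address the issue, since for $X,Y\in\bT^2_{\fgl^M}$ both simples in play have $\lambda=\emptyset$ and any excess classes in $\Ext^q_{\bT^3_{\fgl^M}}(X,Y)$ are not of the form that vanishing rules out. The paper closes this gap by proving that $R$ is \emph{right adjoint} to $\iota$ (\Cref{le.R_right_adj}, via the triangle identities, again obtained from universality); then $D\iota$ is left adjoint to $DR$ and the unit $\id\to DR\circ D\iota$ being an isomorphism is exactly full faithfulness. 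That adjunction is the missing idea in your write-up.
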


\begin{remark}\label{re.3d_faith}
  In other words, the maps $\Ext^i_{\bT^2_{\fgl^M}}(x,y)\to \Ext^i_{\bT^3_{\fgl^M}}(\iota x,\iota y)$ induced by $\iota$ are isomorphisms. 
\end{remark}

We postpone the proof briefly in order to make a few remarks. 

By \Cref{th.3d_univ} the inclusion functor $\iota$ is split by a unique left exact tensor functor $R:\bT^3_{\fgl^M}\rightsquigarrow \bT^2_{\fgl^M}$ sending the pairing $V^*\otimes V\to \bK$ to the pairing $\mathrm{p}:V_*\otimes V\to \bK$ and turning the exact sequence 
\begin{equation*}
  0\to V_*\to V^*\to V^*/V_*\to 0
\end{equation*}
into 
\begin{equation*}
  0\to V_*\to V_*\to 0\to 0.
\end{equation*}
Here, `split' means that $R\circ\iota$ is naturally isomorphic as a tensor functor to the identity. In fact, we have 

\begin{lemma}\label{le.R_right_adj}
  The functor $R$ is right adjoint to $\iota$. 
\end{lemma}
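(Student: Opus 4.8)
The plan is to deduce the adjunction from the universal property in \Cref{th.3d_univ} together with the concrete action of $R$ on the generators. The claim is that there is a natural isomorphism
\begin{equation*}
  \Hom_{\bT^3_{\fgl^M}}(\iota x, y)\cong \Hom_{\bT^2_{\fgl^M}}(x, Ry)
\end{equation*}
for $x\in \bT^2_{\fgl^M}$ and $y\in \bT^3_{\fgl^M}$, compatibly with the tensor structure. The unit of the candidate adjunction is the identity $\mathrm{id}\Rightarrow R\circ\iota$ coming from the splitting (\Cref{le.R_right_adj} is preceded by the observation that $R\circ\iota\cong\mathrm{id}$), so what really needs constructing is the counit $\varepsilon:\iota\circ R\Rightarrow \mathrm{id}_{\bT^3_{\fgl^M}}$, after which one checks the two triangle identities.

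First I would produce the counit on the generating injectives $I_{m,n,p}=(V^*/V_*)^{\otimes m}\otimes (V^*)^{\otimes n}\otimes V^{\otimes p}$. Because $R$ sends $V^*/V_*$ to $0$, $V^*$ to $V_*$ and $V$ to $V$, we have $R(I_{m,n,p})=0$ unless $m=0$, in which case $R(I_{0,n,p})=(V_*)^{\otimes n}\otimes V^{\otimes p}$; composing $\iota R$ with the natural inclusion $(V_*)^{\otimes n}\otimes V^{\otimes p}\hookrightarrow (V^*)^{\otimes n}\otimes V^{\otimes p}=I_{0,n,p}$ (induced by $V_*\subset V^*$) gives the component $\varepsilon_{I_{0,n,p}}$, and $\varepsilon_{I_{m,n,p}}=0$ for $m>0$ is forced. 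Since these injectives generate $\ol{\bT}^3_{\fgl^M}$ and both $\iota R$ and the identity are left exact, \Cref{th.tens_univ}(a) (applied to $\cC=\ol{\bT}^3_{\fgl^M}$, as in \Cref{th.3d_univ}) extends this uniquely to a tensor natural transformation on all of $\bT^3_{\fgl^M}$; naturality on $\bT^3_{X,\otimes}$ reduces to checking compatibility with the short maps $\phi_{i,j}$ and $\pi_{i,j}$, which is immediate because $R(\pi_{i,j})=0$ while $R(\phi_{i,j})$ is the contraction $\mathrm{p}^{\bullet}$ and the inclusion $V_*\subset V^*$ intertwines $\mathrm{p}$ with the evaluation.

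Next I would verify the triangle identities $\varepsilon_{\iota x}\circ \iota(\eta_x)=\mathrm{id}_{\iota x}$ and $R(\varepsilon_y)\circ \eta_{Ry}=\mathrm{id}_{Ry}$. Both are equalities of morphisms in finite-length categories between objects that are subquotients of the $I_{m,n,p}$, so it suffices to check them on the generating injectives, where they become the trivial statements that the composite $(V_*)^{\otimes n}\otimes V^{\otimes p}\hookrightarrow (V^*)^{\otimes n}\otimes V^{\otimes p}\xrightarrow{R=\text{project to }V_*\text{-part}}(V_*)^{\otimes n}\otimes V^{\otimes p}$ is the identity, and its mirror. The only subtlety — and I expect this to be the main obstacle — is making precise that $\varepsilon$ is determined on all objects, not merely on injectives: one must either invoke left-exactness of both functors and the fact that every object embeds into a finite sum of $I_{m,n,p}$ (so that $\varepsilon$ is recovered by the usual two-term injective copresentation argument), or else argue directly that $R$, being left exact with an exact left adjoint candidate, preserves injectives and hence the adjunction can be tested on the injective cogenerators. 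Once the counit is known to be well defined and natural everywhere, the triangle identities are a routine check on generators as indicated, and the proof is complete.
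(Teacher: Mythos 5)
Your proposal is correct, and its skeleton is the one the paper uses: the unit is the splitting isomorphism $\eta:\id\to R\circ\iota$, the counit is determined by $\varepsilon_V=\id$ and $\varepsilon_{V^*}=(V_*\subset V^*)$ (hence zero on $V^*/V_*$), and the triangle identities are verified by reduction to generators. The difference lies in how the counit and the triangle identities are established. The paper simply invokes the universality of \Cref{th.3d_univ} twice: once to produce $\varepsilon$ as a morphism of tensor functors prescribed on $V$, $V^*$, $V_*$, and once more to conclude that the two triangle composites, being tensor-natural endomorphisms of $\iota$ and of $R$ equal to the identity on the generating data, are identities. You instead build $\varepsilon$ by hand on the injective generators $I_{m,n,p}$ (zero for $m>0$, the inclusion $(V_*)^{\otimes n}\otimes V^{\otimes p}\subset (V^*)^{\otimes n}\otimes V^{\otimes p}$ for $m=0$), check naturality against the morphism generators $\phi_{i,j}$, $\pi_{i,j}$ and permutations (which, by \Cref{le.CX_full}, span all morphisms between finite sums of the $I_{m,n,p}$), and extend to all objects by the two-term injective copresentation argument using left exactness of $\iota\circ R$; the triangle identities are then checked on the same generators. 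This is longer, but it has a genuine payoff in rigor: \Cref{th.3d_univ} as stated classifies \emph{functors} up to isomorphism and does not literally assert anything about non-invertible natural transformations such as $\varepsilon$, a point the paper's proof glosses over and which your copresentation step makes precise. Conversely, the paper's route is much shorter if one accepts the stronger (two-categorical) reading of the universal property. One small caveat in your write-up: naturality against $\pi_{i,j}$ is not only the statement that $R(\pi_{i,j})=0$, but also that $\pi_{i,j}\circ\varepsilon_{I_{0,n,p}}=0$, which holds because the composite $V_*\subset V^*\to V^*/V_*$ vanishes; as you say, this is immediate, but it is the actual content of that square.
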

\begin{proof}
The universality in \Cref{th.3d_univ} ensures the existence of isomorphisms of the tensor functors $\eta:\id_{\bT}\to R\circ\iota$ and $\varepsilon:\iota\circ R\to\id_{\bT^3_{\fgl^M}}$. While $\eta$ is the isomorphism referred to above, $\varepsilon_V:V\to V$ is the identity and $\varepsilon_{V^*}:V_*\to V^*$ is the inclusion $V_*\subset V^*$.   

The same universality result then shows that both

\begin{equation*}
  \begin{tikzpicture}[auto,baseline=(current  bounding  box.center)]
    \path[anchor=base] (0,0) node (1) {$\iota$} +(3,0) node (2) {$\iota\circ R\circ\iota$} +(6,0) node (3) {$\iota$};
         \draw[->] (1) to node[auto,pos=.5] {$\id\circ \eta$} (2);
         \draw[->] (2) to node[auto,pos=.5] {$\varepsilon\circ\id $} (3);
  \end{tikzpicture}
\end{equation*}
and
\begin{equation*}
  \begin{tikzpicture}[auto,baseline=(current  bounding  box.center)]
    \path[anchor=base] (0,0) node (1) {$R$} +(3,0) node (2) {$R\circ \iota\circ R$} +(6,0) node (3) {$R$};
         \draw[->] (1) to node[auto,pos=.5] {$\eta\circ \id$} (2);
         \draw[->] (2) to node[auto,pos=.5] {$\id\circ \varepsilon$} (3);
  \end{tikzpicture}
\end{equation*}
are identities. This exhibits $\eta$ and $\varepsilon$ as the unit and respectively counit of an adjunction, as claimed. 
\end{proof}

\begin{proof_of_3d_derived_ff}
  We now know by \Cref{le.R_right_adj} that $\iota$ is left adjoint to $R$. Since $\iota$ is also exact, $D\iota$ and $DR:D^+(\bT^3_{\fgl^M})\rightsquigarrow D^+\left(\bT^2_{\fgl^M}\right)$ similarly constitute an adjunction ($DR$ denoting the right derived functor to $R$). 

The fact that $\eta:\id_{\bT^2_{\fgl^M}}\to R\circ\iota$ is an isomorphism ensures that the same is true for the unit $D\eta:\id_{D^+\left(\bT^2_{\fgl^M}\right)}\to DR\circ D\iota$. In other words, the left adjoint $D\iota$ is fully faithful.    
\end{proof_of_3d_derived_ff}

Incidentally, $\bT^2_{\fgl^M}$ sits inside $\bT^3_{\fgl^M}$ as a split subcategory in another way: By \Cref{th.3d_univ} again we have a left exact symmetric monoidal functor $j:\bT^2_{\fgl^M}\rightsquigarrow \bT^3_{\fgl^M}$ sending $V_*\otimes V\to \bK$ to $V^*\otimes V\to \bK$. Similarly, there is a functor $S:\bT^3_{\fgl^M}\rightsquigarrow\bT^2_{\fgl^M}$ with $S\circ j\cong \id$ and $S\left(V_*\right)=0$ (unlike $R$ above which annihilates the quotient $V^*/V_*$).  In a similar vein to \Cref{le.R_right_adj} we have

\begin{lemma}
$S$ is right adjoint to $j$. 
\qedhere  
\end{lemma}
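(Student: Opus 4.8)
The plan is to mimic the proof of \Cref{le.R_right_adj} almost verbatim, replacing the functor $R$ with $S$ and the subcategory inclusion $\iota$ with $j$. First I would use \Cref{th.3d_univ} to produce the relevant natural tensor isomorphisms: the universal property applied to $\cD=\bT^2_{\fgl^M}$ with the pairing taken to be $\mathrm{p}:V_*\otimes V\to\bK$ and the subobject taken to be the zero subobject $0\subseteq V_*$ yields the functor $S$ together with an isomorphism $\eta':\id_{\bT^2_{\fgl^M}}\to S\circ j$ (this is the content of $S\circ j\cong\id$), while the same universal property applied to $\cD=\bT^3_{\fgl^M}$ produces a natural tensor isomorphism $\varepsilon':j\circ S\to\id_{\bT^3_{\fgl^M}}$. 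Concretely, $\varepsilon'$ is the identity on $V$, and on $V^*$ it is induced by the composite $V^*\to V^*/V_*\hookleftarrow\cdots$; the point is simply that $S$ kills $V_*$ and $j$ reintroduces the pairing, so $j\circ S$ sends $V^*$ to a copy of $V^*/V_*$ inside $\bT^3_{\fgl^M}$ and $\varepsilon'$ is the canonical such identification.

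Next I would verify the two triangle identities. As in \Cref{le.R_right_adj}, the composites
\begin{equation*}
  j\xrightarrow{\ \id\circ\eta'\ } j\circ S\circ j\xrightarrow{\ \varepsilon'\circ\id\ } j
  \qquad\text{and}\qquad
  S\xrightarrow{\ \eta'\circ\id\ } S\circ j\circ S\xrightarrow{\ \id\circ\varepsilon'\ } S
\end{equation*}
are both natural tensor endomorphisms of $j$ and of $S$ respectively; by the uniqueness clause in \Cref{th.3d_univ} it suffices to check that each is the identity, and since both functors are determined by their values on the generating objects and the generating morphisms, it is enough to check these identities on $V$, $V^*$, $V_*$ and the structural maps. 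On these generators the composites are tautologically the identity by the explicit descriptions of $\eta'$ and $\varepsilon'$ above. This exhibits $\eta'$ and $\varepsilon'$ as the unit and counit of an adjunction $j\dashv S$, which is exactly the claim.

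The only genuine subtlety — and hence the step I expect to be the main obstacle — is bookkeeping: making sure that $S$ is genuinely \emph{well defined} as a left exact tensor functor with the stated properties, i.e.\ that the pair $(\,\mathrm{p}:V_*\otimes V\to\bK,\ 0\subseteq V_*\,)$ is a legitimate input to \Cref{th.3d_univ} inside $\cD=\bT^2_{\fgl^M}$. This is a matter of checking that $\bT^2_{\fgl^M}$ is a tensor abelian category and that the chosen pairing and subobject satisfy whatever (trivial) hypotheses \Cref{th.3d_univ} imposes; there are no nontrivial relations to verify because Koszulity reduces everything to quadratic relations that hold in any tensor category (cf.\ the proof of \Cref{th.3d_univ}). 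Once $S$ is in place the adjunction argument is formal and essentially identical to \Cref{le.R_right_adj}, the only difference being that $S$ annihilates $V_*$ rather than the quotient $V^*/V_*$, which plays no role in the triangle-identity computation.
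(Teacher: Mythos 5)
Your overall strategy is the paper's: the paper gives no separate argument for this lemma, intending it to be proved exactly as \Cref{le.R_right_adj}, by constructing a unit and counit from the universal property of \Cref{th.3d_univ} and checking the triangle identities via its uniqueness clause. Your construction of $S$ (input the pairing $\mathrm{p}:V_*\otimes V\to\bK$ in $\cD=\bT^2_{\fgl^M}$ and the zero subobject $0\subseteq V_*$) is correct and matches the paper's description $S(V_*)=0$, $S\circ j\cong\id$.

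However, your explicit description of the counit is wrong, and since your verification of the triangle identities is ``tautological from the explicit descriptions,'' this is a genuine flaw. You claim $j\circ S$ sends $V^*$ to a copy of $V^*/V_*$ and that $\varepsilon'_{V^*}$ is induced by the surjection $V^*\to V^*/V_*$. In fact $S(V^*)=V_*$ (the generator of $\bT^2_{\fgl^M}$) and $j(V_*)=V^*$, so $(j\circ S)(V^*)=V^*$ and $\varepsilon'_{V^*}$ is (a scalar multiple of) the identity; note also that a map of the kind you describe cannot even be nonzero, since $\Hom(V^*/V_*,V^*)=0$ because the socle of $V^*$ is $V_*$, so with your $\varepsilon'$ the triangle identity at $V^*$ would fail. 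The nontrivial components of the correct counit occur elsewhere: $\varepsilon'_{V_*}$ is the zero map $0\to V_*$ and $\varepsilon'_{V^*/V_*}$ is the canonical surjection $V^*\twoheadrightarrow V^*/V_*$ (since $S(V^*/V_*)=V_*/0=V_*$ and $j(V_*)=V^*$). For the same reason $\varepsilon'$ is certainly not a natural isomorphism --- $j\circ S$ annihilates $V_*$ --- so it should be introduced only as a morphism of tensor functors, defined on the generating data $V$, $V^*$, $V^*/V_*$ and extended; the paper is admittedly equally casual about this point in \Cref{le.R_right_adj}, but one cannot invoke the uniqueness clause of \Cref{th.3d_univ} to produce $\varepsilon'$, since $j\circ S$ and $\id$ are not isomorphic functors. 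With the counit corrected as above, your argument does go through along the intended lines.
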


Moreover, the argument in the proof of \Cref{th.3d_derived_ff} can be adapted to prove

\begin{theorem}\label{th.3d_derived_ff_bis}
The derived functor $Dj:D^+\left(\bT^2_{\fgl^M}\right)\rightsquigarrow D^+\left(\bT^3_{\fgl^M}\right)$ is fully faithful.  
\qedhere  
\end{theorem}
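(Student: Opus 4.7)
The plan is to mirror the proof of \Cref{th.3d_derived_ff} almost verbatim; the only substantive new ingredient is that we need to verify that $j$ is exact, whereas in the previous proof the exactness of $\iota$ was immediate since $\iota$ was a subcategory inclusion.

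First I would check that $j$ is exact. The preceding lemma identifies $j$ as a left adjoint (to $S$), so $j$ preserves all colimits; in particular, it preserves cokernels, so $j$ is right exact. Combined with the left exactness built into its construction via \Cref{th.3d_univ}, this yields exactness of $j$. Dually, $S$ being a right adjoint is left exact, so its right derived functor $DS\colon D^+(\bT^3_{\fgl^M})\rightsquigarrow D^+(\bT^2_{\fgl^M})$ exists; moreover, the exactness of $j$ forces $S$ to send injectives to injectives, via the standard adjunction isomorphism $\Hom(-,S(I))\cong \Hom(j(-),I)$.

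Putting these pieces together, the adjunction $j\dashv S$ lifts cleanly to a derived adjunction $Dj\dashv DS$ on bounded-below derived categories: one computes the derived hom-space using an injective resolution in the target and applies $S$ term-wise. The isomorphism of tensor functors $\eta\colon \id_{\bT^2_{\fgl^M}}\to S\circ j$, which comes from the splitting property $S\circ j\cong \id$ built into the universal-property definition of $S$, then induces a unit $D\eta\colon \id_{D^+(\bT^2_{\fgl^M})}\to DS\circ Dj$ for this derived adjunction. Since $j$ is exact, $D\eta$ is itself an isomorphism, and the standard characterization of fully faithful left adjoints in terms of their unit completes the proof.

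The only genuinely new step compared to \Cref{th.3d_derived_ff} is the verification of exactness of $j$, which is handed to us for free by the existence of the right adjoint $S$; I therefore expect no real obstacle in carrying out the argument.
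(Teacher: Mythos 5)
Your proposal is correct and follows essentially the route the paper intends: it adapts the proof of \Cref{th.3d_derived_ff} verbatim, replacing the pair $(\iota,R)$ by $(j,S)$ and concluding full faithfulness of the left adjoint from the unit $\id\to S\circ j$ being an isomorphism. Your explicit verification that $j$ is exact (right exact because it is a left adjoint, left exact by its construction via \Cref{th.3d_univ}) is a detail the paper leaves implicit, and it is exactly the right way to supply it.
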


\begin{remark}
Since $\bT^3_{\fgl^M}$ is roughly speaking obtained from its subcategory $j\left(\bT^2_{\fgl^M}\right)$ by adding a subobject $V_*\subset V^*\in j\left(\bT^2_{\fgl^M}\right)$, the embedding $j:\bT^2_{\fgl^M}\rightsquigarrow \bT^3_{\fgl^M}$ is superficially similar to the restriction functor from the category of representations of a linear algebraic group to that of a parabolic subgroup.  For a parabolic subgroup $P\subset G$ of a reductive algebraic group there is a similar derived full faithfulness phenomenon, whereby restriction from $\cat{rep}(G)$ to $\cat{rep}(P)$ induces isomorphisms between all $\Ext^i$ (see e.g. \cite[Corollary II.4.7]{Jan03}). The significance of \Cref{th.3d_derived_ff_bis} is that it confirms this familiar phenomenon in our setting, thus strengthening the analogy. 
\end{remark}


\subsection{Koszul self-duality}

One of the striking results discovered in \cite{DPS} about the Koszul category $\bT^2_{\fgl^M}$ generated by $V$ and $V_*$ is that it is \define{self-dual}, in the sense that the graded algebra that is the $\bT^2_{\fgl^M}$-analogue of $\cA$ from \Cref{th.mof} above is isomorphic to the opposite of its quadratic dual. This result is then used in \cite{DPS} in the computation of the exts between simple objects of $\bT^2_{\fgl^M}$.

We proceed in a similar way. Our first assertion is that we similarly have self-duality in the context of $\bT^3_{\fgl^M}$.

\begin{theorem}\label{th.self-dual}
  The Koszul coalgebra $C$ defined by \Cref{eq.C_choice} is isomorphic to the opposite of its Koszul dual.  
\end{theorem}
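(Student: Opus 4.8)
The plan is to mirror the proof of the corresponding self-duality for $\bT^2_{\fgl^M}$ in \cite{DPS}. By \Cref{cor.Kosz_better} the coalgebra $C$ is Koszul, hence quadratic, and the Koszul dual of a Koszul coalgebra is again Koszul and quadratic; so it suffices to produce an isomorphism between the quadratic data of $C$ and of $\left(C^!\right)^{\mathrm{op}}$, i.e. between their cosemisimple degree-zero parts, their degree-one bicomodules of cogenerators, and their spaces of quadratic corelations. It is cleaner to dualize and argue on the side of the graded algebra $\cA$ assembled from the morphism spaces $\Hom\left(I_{m,n,p},I_{m',n',p'}\right)$ between the injective objects $I_{m,n,p}$ (so that $C$ is, up to the usual completion, the graded dual of $\cA$, and the Ext-algebra $EC=\bigoplus_j\Ext^j_C(S,T)$ is, up to an opposite, its quadratic dual $\cA^!$): the assertion becomes the graded-algebra isomorphism $\cA\cong\left(\cA^!\right)^{\mathrm{op}}$, which is exactly the form of the $\bT^2$-statement in \cite{DPS}.

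In degree zero, \Cref{le.0} gives $\cA_0=\prod_{m,n,p}\bK\left[S_{m,n,p}\right]$. I would identify this with its opposite using the anti-automorphism $\sigma\mapsto\sigma^{-1}$ composed with a sign twist $\sigma\mapsto(\mathrm{sgn}\,\sigma)\,\sigma$ on the $S_n$-factor (the precise combination of sign twists on the $S_m$-, $S_n$-, $S_p$-factors being dictated by the degree-two computation below); since quadratic duality over a semisimple base does not alter the degree-zero part, this settles the degree-zero matching. In degree one, \Cref{le.1} presents $\cA_1$ as a direct sum, over source triples $(m,n,p)$, of the rank-one free bimodule on a contraction $\phi_{i,j}$ and the rank-one free bimodule on a surjection $\pi_{i,j}$. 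I would define the candidate duality on $\cA_1$ by sending $\phi_{i,j}$ and $\pi_{i,j}$ to the dual basis vectors of the corresponding arrows of $\cA^!$, via the canonical identification $\left(\cA^!\right)_1\cong\left(\cA_1\right)^\vee$ of $\cA_0$-bimodules (which swaps sides using $\sigma\mapsto\sigma^{-1}$). A routine check shows this is an isomorphism of $\cA_0$-bimodules compatible with the degree-zero identification.

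The heart of the proof is the degree-two step: one must verify that the map constructed on $\cA_1$ carries the quadratic relation space $R\subseteq\cA_1\otimes_{\cA_0}\cA_1$ — described componentwise in \Cref{le.2} — onto the relation space of $\left(\cA^!\right)^{\mathrm{op}}$, i.e. onto the op-twist of the orthogonal complement $R^\perp$. Using \Cref{le.11} to identify each piece of $\cA_1\otimes_{\cA_0}\cA_1$ with a group algebra, \Cref{le.2} pins $R$ down: in every component it is the subbimodule generated by a single ``antisymmetrizing'' element $x-x\circ\tau$ with $\tau$ an adjacent transposition. One then computes $R^\perp$ directly — it is the complementary ``symmetrizing'' span, of equal dimension — and checks that, after the sign twists fixed in degree zero, it is precisely the image of $R$. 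The three cases (a), (b), (c) of \Cref{le.2} run in parallel: (a) (two contractions) and (c) (two surjections) reproduce the single relation of the $\bT^2$ computation in \cite{DPS}, while the mixed case (b), whose generating relation has one term in \Cref{eq.11b} and one in \Cref{eq.11c}, is where the sign twists on the three symmetric-group factors must be mutually consistent; verifying this compatibility is the main obstacle. Once it is in place the rest is formal, and dualizing the resulting isomorphism $\cA\cong\left(\cA^!\right)^{\mathrm{op}}$ yields $C\cong\left(C^!\right)^{\mathrm{op}}$.

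Finally, I would record a more conceptual heuristic that could shortcut some of this bookkeeping: by the universal property of \Cref{th.3d_univ}, the Koszul dual category of $\bT^3_{\fgl^M}$ is again universal for the data of a copairing $\mathbf{1}\to x\otimes x^*$ together with a sub/quotient relation, and the symmetry of these axioms under the adjunction exchanging pairings with copairings and subobjects with quotients should identify that category, after passing to opposites, with $\bT^3_{\fgl^M}$ itself — lifting the known self-duality of $\bT^2_{\fgl^M}$. Making this precise still requires locating the Young-diagram transposes the duality introduces (the combinatorial shadow of the sign twists above), so I would keep the explicit quadratic computation as the backbone of the argument.
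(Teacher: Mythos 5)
Your proposal is correct and follows essentially the same route as the paper: the paper likewise reduces to the graded algebra $\cA$ (via its truncations $\cA^r$), identifies the degree-zero and degree-one data through the group algebras of \Cref{le.0,le.1}, defines the duality by the sign twist $\sigma=(\sigma_1,\sigma_2,\sigma_3)\mapsto \mathrm{sgn}(\sigma_2)\,\sigma$ on the $S_n$-factor, and checks that the quadratic relations of \Cref{le.2} are carried to the annihilator $R^\perp$, the antisymmetrizer $(m+2,m+1)-(n,n-1)$ going to the symmetrizer $(m+2,m+1)+(n,n-1)$ (the paper writes out case (c) and declares (a), (b) analogous, whereas you flag (b) as the delicate one). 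Your concluding ``conceptual'' shortcut via \Cref{th.3d_univ} is not used in the paper and, as you note, is not needed once the explicit quadratic computation is in place.
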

\begin{proof}
  It suffices to prove the self-duality statement for the truncations $\cA^r$ of the algebra $\cA$ in the discussion at the end of \Cref{subse.Kosz}. This will come  as a consequence of \Cref{le.0,le.1,le.11,le.2} applied to $(m,n,p)$ with $m+n+p=r$. The proof is similar to that of \cite[Lemma 6.4]{DPS}.

Following the discussion on quadratic duals in \cite[$\S$2.8]{BGS}, denote by $\bK$ the semisimple degree-zero component of $\cA^r$ and by $U$ the degree-one component. Moreover, $\cA=\mathrm{T}(U)/(R)$, where
$R$ is the space of quadratic relations, i.e. the direct sum of the spaces described in \Cref{le.2} for $m+n+p=r$. 

The quadratic dual $(\cA^r)^!$ is then $\mathrm{T}(U^*)/(R^\perp)$, where 
$R^\perp\subset U^*\otimes U^*$ is the annihilator of $R\subset U\otimes U$. First, identify the direct summands $\bK[S_{m',n',p'}]\subset U$ from \Cref{le.1} with their duals by making the elements of $S_{m',n',p'}$ self-dual. It is now easy to see that the map 
\begin{equation}\label{eq.sign}
  d:S_{m',n',p'}=S_{m'}\times S_{n'}\times S_{p'} \to U\cong U^*,\,
 \sigma=(\sigma_1,\sigma_2,\sigma_3) \mapsto \mathrm{sgn}(\sigma_2)\sigma
\end{equation}
will intertwine the multiplication on $\cA^r$ and the opposite multiplication on $(\cA^r)^!$ provided the map $d$ respects the relations. We once more argue this last point just in case (c) of \Cref{le.2}, as the other two cases are analogous. 

Following the proof of \Cref{le.2}, identify $\Hom(I_{m+1,n-1,p},I_{m+2,n-2,p})$ and $\Hom(I_{m,n,p},I_{m+1,n-1,p})$ with $\bK[S_{m+2,n-1,p}]$ and $\bK[S_{m+1,n,p}]$ as in part (b) of \Cref{le.1}, using $\pi_{m+2,n-1}$ and $\pi_{m+1,n}$ as generators respectively. The generator \Cref{eq.2c} of the relevant summand of $R$ is then identified with $(m+2,m+1)-(n,n-1)$, whose annihilator in $U\cong U^*$ is generated by $(m+2,m+1)+(n,n-1)$. This concludes the argument. 
\end{proof}

As a consequence of this, we now have the dimensions of the higher extension groups between simple objects of $\bT^3_{\fgl^M}$. The following result is analogous to \cite[Corollary 6.5]{DPS} and \cite[Proposition 3.3.3]{SS}.

\begin{corollary}\label{cor.self-dual}
  For any two simple objects in $\bT^3_{\fgl^M}$ and any $q\ge 0$ we have
  \begin{equation}\label{eq.self-dual}
    \dim\ \Ext^q(V_{\lambda,\mu,\nu},V_{\lambda',\mu',\nu'}) = \text{ multiplicity of }V_{\lambda,\mu^\perp,\nu}\text{ in }\usoc^{q+1}(I_{\lambda',(\mu')^\perp,\nu'}),
  \end{equation}
where $\perp$ denotes the transpose of a Young diagram. 
\end{corollary}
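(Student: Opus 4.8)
# Proof Proposal for Corollary~\ref{cor.self-dual}

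The plan is to run the standard argument deriving $\Ext$-dimensions from Koszul self-duality, exactly as in \cite[Corollary 6.5]{DPS}, using the results already assembled in this section. Recall that $\bT^3_{\fgl^M}\simeq \cM^C_{\fin}$ for the Koszul coalgebra $C$ of \Cref{eq.C_choice} (\Cref{cor.Kosz_better}), and that $EC=\bigoplus_q\Ext^q(S,T)$ is the Yoneda ext-algebra over the simple objects. First I would recall the general Koszul-duality fact: for a Koszul coalgebra $C$ with Koszul dual $C^!$, the ext-algebra $EC$ is (graded-dual to) $C^!$ itself, so that $\Ext^q_C(S,T)$ is naturally identified with the degree-$q$ component of the relevant block of $C^!$; equivalently, $\Ext^q_C(S,T)^*$ is the multiplicity space governing how the degree-$q$ piece of the injective hull of $T$ (computed in the Koszul grading) contains $S$. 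Concretely, $\dim\Ext^q(S,T)$ equals the multiplicity of $S$ in the degree-$q$ graded piece of the indecomposable injective $\widetilde T$ in $\cM^{C^!}$.

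The second step is to invoke \Cref{th.self-dual}: $C$ is isomorphic to the opposite of its Koszul dual. Therefore the category $\cM^{C^!}_{\fin}$ is equivalent (via the self-duality isomorphism) to $(\bT^3_{\fgl^M})^{\mathrm{op}}$, or—tracking the explicit form of the isomorphism—to $\bT^3_{\fgl^M}$ itself after the relabeling of simple objects induced by the map $d$ of \Cref{eq.sign}. Inspecting \Cref{eq.sign}, the sign character $\mathrm{sgn}(\sigma_2)$ is inserted only on the $S_n$-factor (the one indexing $V^*$-tensorands), and tensoring a Specht/Schur module by the sign character transposes the corresponding Young diagram. Hence under the self-duality the simple $V_{\lambda,\mu,\nu}$ is matched with $V_{\lambda,\mu^\perp,\nu}$, while the first ($V^*/V_*$) and third ($V$) diagrams are unchanged; correspondingly the indecomposable injective $I_{\lambda',\mu',\nu'}$ is matched with $I_{\lambda',(\mu')^\perp,\nu'}$. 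This is the source of the transpose appearing in \Cref{eq.self-dual}.

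Combining the two steps: $\dim\Ext^q(V_{\lambda,\mu,\nu},V_{\lambda',\mu',\nu'})$ equals the multiplicity of $V_{\lambda,\mu,\nu}$ in the degree-$q$ graded piece of the injective hull of $V_{\lambda',\mu',\nu'}$ in $\cM^{C}$ computed with the Koszul grading; pushing this through the self-duality of \Cref{th.self-dual} turns it into the multiplicity of $V_{\lambda,\mu^\perp,\nu}$ in the degree-$q$ graded piece of $I_{\lambda',(\mu')^\perp,\nu'}$. The final point is to identify the Koszul-graded degree-$q$ piece with $\usoc^{q+1}$. This is where \Cref{th.Kosz} is essential: it shows that for $V_{\kappa,\rho,\sigma}\hookrightarrow I_{\lambda',(\mu')^\perp,\nu'}$ we have $\Ext^q\neq 0$ forcing the $\rho$-diagram to drop in size by exactly $q$, i.e. the Koszul grading on $C$ coincides (up to the shift by $1$ built into the convention $\usoc^1=\soc$) with the socle filtration of the indecomposable injectives. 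Hence the degree-$q$ graded component of $I_{\lambda',(\mu')^\perp,\nu'}$ is precisely $\usoc^{q+1}(I_{\lambda',(\mu')^\perp,\nu'})$, yielding \Cref{eq.self-dual}.

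The main obstacle I anticipate is bookkeeping rather than conceptual: one must make the identification ``Koszul grading = socle filtration shifted by one'' genuinely precise, which requires combining \Cref{th.Kosz} (the grading is concentrated where $|\mu|$ drops by $q$) with the description of $\usoc^k(I_{\lambda,\mu,\nu})$ in \Cref{th.soc_filt} (confirming each socle layer sits in a single such degree), and then carefully tracking how the self-duality map $d$ of \Cref{eq.sign} acts on the triple-diagram labels—in particular verifying that only the middle diagram is transposed and the socle-filtration length is preserved under $d$. Once those compatibilities are checked the corollary follows formally.
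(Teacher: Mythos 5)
Your proposal is correct and follows essentially the same route as the paper: both identify $\Ext^q$ with the degree-$q$ component of the Koszul-dual object via \cite[Theorem 2.10.1]{BGS}-type duality, apply \Cref{th.self-dual} to fold this back onto $\cA$ (equivalently $C$), read off the transpose of the middle diagram from the sign in \Cref{eq.sign}, and match graded degree $q$ with the socle layer $\usoc^{q+1}$ of the indecomposable injective. The paper phrases the last steps in terms of idempotents $p_{\lambda,\mu^\perp,\nu}\cA p_{\lambda',(\mu')^\perp,\nu'}$ following \cite[Corollary 6.5]{DPS}, while you phrase them in comodule language, but this is only a presentational difference.
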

\begin{proof}
The left-hand side of \Cref{eq.self-dual} is a summand of the degree-$q$ component $EC_q$ of the algebra $EC$ from \Cref{not.EC}. On the other hand, \cite[Theorem 2.10.1]{BGS} claims that $EC$ is the opposite of the Koszul dual of $\cA$, which in turn is equal to $\cA$ by \Cref{th.self-dual}. 

The proof can now proceed as in \cite[Corollary 6.5]{DPS}, using the fact that the right hand side of \Cref{eq.self-dual} is the dimension of $p_{\lambda,\mu^\perp,\nu}\cA p_{\lambda',(\mu')^\perp,\nu'}$ where $p_{\bullet,\bullet,\bullet}$ are the idempotents in $\cA$ corresponding to the respective indecomposable injectives in $\bT^3_{\fgl^M}$, and the observation that the identification $(\cA^!)^{\mathrm{opp}}\cong EC$ from the proof of \Cref{th.self-dual} matches the idempotent of $EC$ corresponding to $V_{\lambda,\mu,\nu}$ to that of $\cA$ corresponding to $V_{\lambda,\mu^\perp,\nu}$ because of the sign in \Cref{eq.sign}. 
\end{proof}

As a consequence of \Cref{th.soc_filt}, we can rephrase the result as follows.

\begin{corbis}
The dimension of $\Ext^q(V_{\lambda,\mu,\nu},V_{\lambda',\mu',\nu'})$
equals
  \begin{equation*}
        \bigoplus_{\ell+r=q}\bigoplus_{|\alpha|=\ell}\bigoplus_{|\delta|=r}
        N_{\lambda',\alpha}^\lambda
        N_{\alpha,\beta}^{(\mu')^\perp}N_{{\mu^\perp},\delta}^{\beta}N_{\nu,\delta}^{\nu'},
  \end{equation*}
where repeated indices are summed over even if they do not appear under summation signs. 
\qedhere  
\end{corbis}

We illustrate the contents of this subsection with an example.

\begin{example}
  In \Cref{cor.self-dual}, let $\lambda=(2)={\tiny\begin{ytableau}
 \phantom{a} &\phantom{a}
\end{ytableau}}\,$, $\mu=\emptyset$, and $\nu=(1)={\tiny\begin{ytableau}
 \phantom{a} 
\end{ytableau}}\,$, and also
  \begin{equation*}
    \lambda'=\mu'=\nu'=(1). 
  \end{equation*}
Then, since the injective hull $I_{\lambda',(\mu')^\perp,\nu'}$ equals $(V^*/V_*)\otimes V^*\otimes V$, \Cref{cor.self-dual} says that $$\Ext^q\left(V_{(2),\emptyset,(1)},V_{(1),(1),(1)}\right)=0$$ for all $q\neq1$, and that $\dim\Ext^1\left(V_{(2),\emptyset,(1)},V_{(1),(1),(1)}\right)=1$. In other words, a nonzero vector in $\Ext^1\left(V_{(2),\emptyset,(1)},V_{(1),(1),(1)}\right)$ arises from the exact sequence
\begin{equation*}
  0\to V_{(1),(1),(1)}\to I_{(1),(1),(1)}\to V_{(2),\emptyset,(1)}\oplus V_{(1,1),\emptyset,(1)}\oplus V_{(1),\emptyset,\emptyset}\to 0
\end{equation*}
where $(1,1)={\tiny\begin{ytableau}
 \phantom{a} \\
 \phantom{a}
\end{ytableau}}\,$, 
and there are no higher extensions between $V_{(2),\emptyset,(1)} = S^2(V^*/V_*)\otimes V$ and $V_{(1),(1),(1)}=(V^*/V_*)\otimes(\mathrm{ker}\,\mathrm{p})$ ($\mathrm{p}:V_*\otimes V\to \mathbb{K}$ is our original pairing).  

Similarly, the nonzero vectors in $\Ext^1\left(V_{(1,1),\emptyset,(1)},V_{(1),(1),(1)}\right)$ and $\Ext^1\left(V_{(1),\emptyset,\emptyset},V_{(1),(1),(1)}\right)$ arise from the same exact sequence, and $\Ext^q\left(V_{(1,1),\emptyset,(1)},V_{(1),(1),(1)}\right)=\Ext^q\left(V_{(1),\emptyset,\emptyset},V_{(1),(1),(1)}\right)=0$ for $q\neq 1$.
\end{example}

\section{The four-diagram category}\label{sec.four}

It is interesting to study the full, thick, tensor category $\bT^4_{\fgl^M}$ of $\fg^M$-modules obtained by adjoining $\ol{V}=(V_*)^*$ to $\bT^3_{\fgl^M}$. This category is not an ordered tensor category in the sense of \Cref{subse.tensor}. One way to see this is to show that the indecomposable injective objects in the Grothendieck closure of $\bT^4_{\fgl^M}$ have infinite length.  We will possibly provide the proof of the latter fact elsewhere.

In this paper, we restrict ourselves to showing that $\bT^4_{\fgl^M}$ is a finite-length category and to describing its simple objects. We begin by proving the following

\begin{proposition}\label{pr.simples}
  For any two Young diagrams $\lambda$ and $\mu$, the $\fg^M$-module $(V^*/V_*)_\lambda\otimes\left(\ol{V}/V\right)_\mu$ is simple.  
\end{proposition}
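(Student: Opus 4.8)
The plan is to reduce the statement to showing that every nonzero submodule of the (visibly nonzero) $\fg^M$-module $M:=(V^*/V_*)_\lambda\otimes\left(\ol V/V\right)_\mu$ is all of $M$, and then to prove this by the support‑manipulation technique used in the proof of \Cref{pr.S-W}. First I would record that each tensor factor is already simple. For $(V^*/V_*)_\lambda$ this is the $\mu=\nu=\emptyset$ case of the theorem of \cite{Chi14} recalled in \Cref{se.prel}, since $V_{\emptyset,\emptyset}=\bK$. For $\left(\ol V/V\right)_\mu$ it follows by symmetry: reading $\mathrm p$ as a nondegenerate pairing $V_*\otimes V\to\bK$ identifies $\fg^M=\fgl^M(V,V_*)$ with $\fgl^M(V_*,V)$ via $\varphi\mapsto-\varphi^*$, and under this identification the $\fg^M$-module $\ol V/V=(V_*)^*/V$ is carried to the module playing the role of ``$V^*/V_*$'' for the pairing $\mathrm p$, which is simple by the same theorem. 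Thus $M$ is a tensor product of two simple modules, a direct summand of $(V^*/V_*)^{\otimes m}\otimes\left(\ol V/V\right)^{\otimes n}$ with $m=|\lambda|$, $n=|\mu|$, hence an object of $\bT^4_{\fgl^M}$; simplicity is what remains.

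For the main argument I would work in the concrete model of the proof of \Cref{pr.S-W}: $V^*$ is the space of $\bZ_{>0}$-indexed row vectors with $g\cdot v=-vg$, so that $V_*$ is the space of finitely supported rows and $V^*/V_*$ consists of rows modulo finite corrections; dually $\ol V=(V_*)^*$ is the space of column vectors, $V\subset\ol V$ the finitely supported columns, and $\ol V/V$ columns modulo finite corrections. Let $0\ne N\subseteq M$ be a submodule and $0\ne x\in N$. Only finitely many decomposable tensors occur in $x$, and after correcting representatives by finitely supported vectors I may assume the row tensorands span a finite-dimensional $P\subseteq V^*/V_*$ whose basis elements have pairwise disjoint infinite supports, the column tensorands span a finite-dimensional $Q\subseteq\ol V/V$ with the same property, and all supports occurring in $P$ and in $Q$ are disjoint; then $x\in P_\lambda\otimes Q_\mu$, which already forces $\dim P\ge\ell(\lambda)$ and $\dim Q\ge\ell(\mu)$. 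Exactly as in the proof of \Cref{pr.S-W}, one produces operators in $\fg^M$ — built from bijections between the various supports, together with permutation‑type operators — that act on $P^{\otimes m}\otimes Q^{\otimes n}$ through the standard action of $\fgl(P)\times\fgl(Q)$ on the two groups of tensorands; since $P_\lambda\otimes Q_\mu=S^\lambda(P)\boxtimes S^\mu(Q)$ is then an irreducible $\fgl(P)\times\fgl(Q)$-module, $x$ generates it, so $P_\lambda\otimes Q_\mu\subseteq N$. One then enlarges $P$: choose a fresh infinite index set, a new row $p'$ supported there, and $g\in\fg^M$ taking one basis row of $P$ to $p'$ and annihilating both the remaining basis rows and all of $Q$; arranging that $x$ genuinely involves that basis row (shrink $P$ otherwise), $0\ne g\cdot x\in (P+\bK p')_\lambda\otimes Q_\mu\subseteq N$, and adjoining the matching $\fgl(P+\bK p')\times\fgl(Q)$-operators pulls all of $(P+\bK p')_\lambda\otimes Q_\mu$ into $N$. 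Enlarging $Q$ symmetrically and iterating, $N$ contains $P'_\lambda\otimes Q'_\mu$ for arbitrarily large finite-dimensional $P',Q'$ in general position, and the union of these is $M$; hence $N=M$.

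\emph{Expected main obstacle.} Everything rests on the last step, and the care needed there is the real content: one must verify that the enlarging operators can be chosen inside $\fg^M$ (finitely many nonzero entries per row and column) while keeping the disjoint‑support hypotheses, that $g\cdot x$ is genuinely nonzero after Schur projection, and that passing to the quotients by finitely supported vectors causes no collapse of the functors $(\cdot)_\lambda$ and $(\cdot)_\mu$. These are precisely the kinds of verifications already carried out in \cite{Chi14} and in the proof of \Cref{pr.S-W}, so the difficulty is one of bookkeeping rather than of a new mechanism. I would organize the write-up around the lemma that, for $P$ and $Q$ in general position, every nonzero vector of $P_\lambda\otimes Q_\mu$ generates $P_\lambda\otimes Q_\mu$ over the subalgebra of $\fg^M$ stabilizing $P$ and $Q$, followed by a passage to the colimit over all such $P$ and $Q$.
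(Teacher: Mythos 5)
Your middle step (realizing enough of $\fgl(P)\times\fgl(Q)$ inside $\fg^M$ on a disjoint-support configuration and invoking irreducibility of $P_\lambda\otimes Q_\mu$) is sound, but the argument has a genuine gap which appears twice --- in the opening normalization and in the closing union --- both times resting on the same false premise: that everything can be put in disjoint-support form. Not every finite-dimensional subspace of $V^*/V_*$ admits a basis with pairwise disjoint (mod finite) supports, and ``correcting representatives by finitely supported vectors'' cannot help, since it does not change the subspace at all. Concretely, let $v_1,v_2\in V^*/V_*$ be the classes of the rows $(1,1,1,\dots)$ and $(1,2,3,\dots)$. If $u_1,\dots,u_k$ have pairwise disjoint supports and $v_1=\sum_i a_iu_i$, then all but finitely many indices must lie in the union of the supports and each $u_i$ must be constant, equal to $1/a_i$, on all but finitely many points of its (infinite) support; but then no combination $\sum_i b_iu_i$ can equal $v_2$, which is eventually constant on no infinite set. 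Hence no finite-dimensional subspace spanned by disjoint-support vectors contains $\mathrm{span}(v_1,v_2)$. Consequently (i) your claim that after a choice of representatives the tensorands of an arbitrary nonzero $x$ span such a $P$ is unjustified --- reducing to a concentrated element requires applying elements of $\fg^M$ to $x$, as in the proof of \cite[Proposition 1]{Chi14}, not choosing representatives; and (ii), fatally, your final assertion that the union of the $P'_\lambda\otimes Q'_\mu$ over general-position $P',Q'$ is all of $M$ is false: for instance $c_{(2)}(v_1\otimes v_2)\otimes y$, for any nonzero $y\in(\ol V/V)_\mu$, lies in $M$ but in no such $P'_\lambda\otimes Q'_\mu$, because $c_{(2)}(v_1\otimes v_2)\in P'_{(2)}$ forces $\mathrm{span}(v_1,v_2)\subseteq P'$. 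Your iteration therefore shows only that the submodule $N$ contains the disjoint-support locus of $M$, not that $N=M$.

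What is missing is a transitivity statement with \emph{arbitrary} targets, and this is exactly the paper's \Cref{le.simples}: for linearly independent $v_i\in V^*/V_*$ and $w_i\in\ol V/V$ and completely arbitrary $v_i',w_i'$ there is a single $g\in\fg^M$ with $gv_i=v_i'$ and $gw_i=w_i'$, constructed by filling in the columns and rows of a matrix recursively so that each row and column stays finitely supported. In the paper, once the submodule is known to contain all concentrated elements $c_\lambda m\otimes c_\mu n$ (obtained there from the block-diagonal copy of $\fg^M\oplus\fg^M$ and \cite[Proposition 1]{Chi14}; your $\fgl(P)\times\fgl(Q)$ variant of this portion is a legitimate alternative), repeated application of this lemma moves such elements onto arbitrary ones and finishes the proof. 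Your enlargement step, by contrast, only ever adjoins fresh vectors supported on fresh index sets, so it can never reach elements like the one exhibited above; to repair the argument you need to prove and use a lemma of the \Cref{le.simples} type (and likewise justify the initial reduction to a concentrated element along the lines of \cite{Chi14} rather than by re-choosing representatives).
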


As before, we think of $\fg^M$ as consisting of $\bZ_{>0}\times \bZ_{>0}$ matrices with finitely many nonzero entries in each row and column, and of $V^*/V_*$ as the space of infinite row vectors modulo those with finitely many nonzero entries, the action of $\fg^M$ on $V^*/V_*$ being given by the formula $gv=-vg$. In the same spirit, $\ol{V}/V$ consists of $\bZ_{>0}$-indexed column vectors that are regarded as equivalent up to changing finitely many entries; the action of $\fg^M$ on $\ol{V}/V$ is usual left multiplication.

We need the following auxiliary result.

\begin{lemma}\label{le.simples}
  For any choice of $n$-tuples $v_i,v_i'\in V^*/V_*$ and $w_i, w_i'\in \ol{V}/V$, $1\le i\le n$, if the $v_i$ and $w_i$ are linearly independent then there is an element $g\in \fg^M$ such that
  \begin{equation*}
    gv_i = v_i',\quad gw_i=w_i',\ \forall i.
  \end{equation*}
\end{lemma}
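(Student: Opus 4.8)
The plan is to reduce to a one‑sided statement and then build $g$ by a ``banded'' block construction. Fix honest representatives of $v_i,v_i'\in V^*/V_*$ (infinite row vectors, well defined up to finitely supported ones) and of $w_i,w_i'\in\ol V/V$ (infinite column vectors, likewise); the hypothesis says that the chosen representatives of the $v_i$, and those of the $w_i$, are each linearly independent modulo finitely supported vectors. It suffices to write $g=\alpha+\beta$ with $\alpha\in\fg^M$ satisfying $\alpha w_i=w_i'$ and $v_i\alpha=0$ (as honest vectors), and $\beta\in\fg^M$ satisfying $\beta w_i=0$ and $v_i\beta=-v_i'$; indeed then $g\in\fg^M$, $g\cdot w_i=gw_i=w_i'$ in $\ol V/V$, and $g\cdot v_i=-v_ig=v_i'$ in $V^*/V_*$. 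The second of these two tasks is the transpose of the first: transposing everything turns it into finding $\beta^{\mathrm t}\in\fg^M$ with $\beta^{\mathrm t}v_i^{\mathrm t}=-(v_i')^{\mathrm t}$ and $w_i^{\mathrm t}\beta^{\mathrm t}=0$, and $\fg^M$ is stable under matrix transposition. So everything reduces to the following.

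\emph{Claim.} Given row vectors $r_1,\dots,r_n$ and column vectors $u_1,\dots,u_n$, each family linearly independent modulo finitely supported vectors, and arbitrary column vectors $u_1',\dots,u_n'$, there is $A\in\fg^M$ with $Au_i=u_i'$ and $r_iA=0$ for all $i$. (Here $r_iA$ makes sense entry by entry because each column of $A$ is finitely supported.)

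The point of difficulty is that the easy approaches fail: concentrating $A$ in finitely many columns and inverting the $n\times n$ evaluation of the $u_i$ on those columns does hit the targets, but forces $A$ to have infinitely supported columns, so $A\notin\fg^M$. One must therefore spread the output over infinitely many columns, and then ``$r_iA$ finitely supported'' becomes a genuine constraint. Disjoint rectangular blocks still do not suffice: per block one runs into an orthogonality obstruction (the relevant restriction of $u_i'$ would have to be orthogonal to each restriction of $r_j$, which cannot be arranged by choosing the block). The fix is to let the row–supports of consecutive blocks overlap and cancel the obstruction by a telescoping recursion. Concretely, using that deleting finitely many coordinates preserves linear independence modulo finitely supported vectors, and that linearly independent vectors remain so on a suitable finite set of coordinates, choose pairwise disjoint finite sets $O_1,O_2,\dots$ with $\{r_j|_{O_k}\}_j$ linearly independent for each $k$, pairwise disjoint finite sets $P_1,P_2,\dots$ disjoint from all the $O_k$ and with $\bigcup_k(P_k\cup O_k)=\bZ_{>0}$, and finite ``reading windows'' $L_1,L_2,\dots$ with $\{u_i|_{L_k}\}_i$ linearly independent for each $k$. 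Put $B_k:=O_{k-1}\cup P_k\cup O_k$ (with $O_0:=\emptyset$), so that $B_k\cap B_{k+1}=O_k$, $B_k\cap B_j=\emptyset$ for $|k-j|\ge 2$, and every coordinate lies in at most two of the $B_k$.

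Now recursively define, for each $i$, a vector $\delta_k^{(i)}$ supported on $B_k$: on $P_k$ set $\delta_k^{(i)}:=u_i'|_{P_k}$; on $O_{k-1}$ set $\delta_k^{(i)}:=u_i'|_{O_{k-1}}-\delta_{k-1}^{(i)}|_{O_{k-1}}$ (already determined); on $O_k$ choose $\delta_k^{(i)}|_{O_k}$ so that $r_j\cdot\delta_k^{(i)}=0$ for $j=1,\dots,n$, which is an inhomogeneous linear system solvable because $\{r_j|_{O_k}\}_j$ is independent, hence its coefficient matrix has rank $n$. Finally let $A^{(k)}$ be the matrix supported on $B_k\times L_k$ realizing the linear map $\bK^{L_k}\to\bK^{B_k}$ sending $u_i|_{L_k}\mapsto\delta_k^{(i)}$ (well defined since the $u_i|_{L_k}$ are independent) and vanishing on a complement, and set $A:=\sum_kA^{(k)}$. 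The verification is then routine: each column of $A$ sits in one $A^{(k)}$ and is supported on $B_k$, and each row meets at most two $A^{(k)}$, in each supported on an $L_k$, so $A\in\fg^M$; since $A^{(k)}$ uses only the columns $L_k$, one has $Au_i=\sum_k\delta_k^{(i)}$, a locally finite sum which telescopes on the overlaps $O_k$ to $u_i'$ exactly; and $(r_iA)_\ell=r_i\cdot(\ell\text{-th column of }A^{(k)})$ for $\ell\in L_k$ and $0$ otherwise, which vanishes because every column of $A^{(k)}$ lies in $\operatorname{span}_j\{\delta_k^{(j)}\}$, all of whose elements are annihilated by $r_1,\dots,r_n$. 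This proves the Claim, and with it the lemma. The main obstacle is exactly the one flagged above: recognizing that the block‑\emph{diagonal} construction is hopeless and putting in place the overlapping blocks together with the telescoping recursion; once the sets $O_k,P_k,L_k$ are fixed, the recursion and the checks are bookkeeping.
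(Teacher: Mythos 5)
Your proof is correct, but it takes a genuinely different route from the paper. The paper never splits $g$ or transposes: it builds $g$ directly by an interleaved recursion on coordinates, at step $k$ choosing the still-free entries of the $k$th column (finitely many nonzero) so that the $k$th entries of the $gv_i$ match those of the $v_i'$, then the still-free entries of the $k$th row so that the $k$th entries of the $gw_i$ match those of the $w_i'$; linear independence modulo finitely supported vectors guarantees each finite linear system is solvable even after deleting the already-fixed coordinates, and finite support of each row and column holds by construction. Your argument instead reduces to a one-sided Claim via the decomposition $g=\alpha+\beta$ and transposition, and then the extra requirement $r_iA=0$ (which the paper never imposes -- it only matches the prescribed coordinates of $-v_ig$, one at a time) is what forces your overlapping-block, telescoping construction; the block machinery is the price of that stronger intermediate statement, while the payoff is a cleanly quotable one-sided interpolation-plus-annihilation claim. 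Both proofs are valid; the paper's is shorter because alternating between columns (for the $v_i$) and rows (for the $w_i$) dissolves the interference between the two conditions that your splitting has to confront head-on. One small point in your write-up: you should require the reading windows $L_k$ to be pairwise disjoint (or at least that each column index lies in only finitely many $L_k$), which is achievable by the same deletion argument you use for the $O_k$ since the $u_i$ are independent modulo finitely supported vectors; without this, a column of $A$ could receive contributions from infinitely many $A^{(k)}$ and fail to be finitely supported, and your verification sentence ``each column of $A$ sits in one $A^{(k)}$'' tacitly assumes exactly this disjointness.
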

\begin{proof}
The fact that our matrices are infinite allows enough freedom to construct such a $g$ by recursion. Throughout the proof we will conflate elements of $V^*/V_*$ and representatives in $V^*$ (and similarly for $\ol{V}/V$ and $\ol{V}$ respectively). 

First, fix the first column of $g$ so as to ensure that, for every $i$,
the first entry of $gv_i$ equals the first entry of $v_i'$; this is possible by the linear independence assumption. Now the first entry of the first row is fixed, but we can still fill in the first row so as to ensure that the first entries of $gw_i$ equal those of $w_i'$ respectively. 

Now repeat the procedure, completing the second column appropriately
and then the second row, etc. At every step we are ensuring by
construction that our rows and columns have finitely many non-zero
entries, hence $g\in \fg^M$ as desired.   
\end{proof}

As in \cite[$\S$ 2]{Chi14}, for any subset $I\subseteq \bZ_{>0}$ we say that an element of $V^*/V_*$ or $\ol{V}/V$ is \define{concentrated in $I$} or \define{$I$-concentrated} if the indices of all but finitely many of its nonzero entries are in $I$. 

We also say that an element of some tensor power $(V^*/V_*)^{\otimes n}$ is $I$-concentrated if it is a sum of decomposable tensors whose individual tensorands are $I$-concentrated. This also applies to images of Schur functors like $(V^*/V_*)_\lambda$, which we think of as subspaces of a tensor power. 

Finally, a matrix in $\fg^M$ is $I$-concentrated if all of its nonzero entries are indexed by $I\times I$.

\begin{proof_of_simples}
  Denote $M=V^*/V_*$, $N=\ol{V}/V$ and let $x\in M_\lambda\otimes N_\mu$ be a nonzero element. 

Following the same strategy as in the proof of \cite[Proposition 1]{Chi14}, we can assume that $x$ can be written as a sum of decomposable tensors $x_i\otimes y_i$ with $x_i\in M_\lambda$ and $y_i\in N_\mu$ concentrated in two disjoint infinite subsets $I$ and $J$ respectively of $\bZ_{>0}$. 

The Lie subalgebra of $\fg^M$ consisting of block-diagonal matrices with blocks concentrated in $I$ and $J$ is isomorphic to $\fg^M\oplus\fg^M$, and the two summands act separately on the $I$- and $J$-concentrated parts of $M_\lambda$ and $N_\mu$ respectively. The simplicity of these latter modules over $\fg^M$ (\cite[Proposition 1]{Chi14}) implies that the submodule of $M_\lambda\otimes N_\mu$ generated by $x$ contains all elements of the form $c_\lambda m\otimes c_\mu n$ where $c_\lambda$ and $c_\mu$ are the Young symmetrizers corresponding to $\lambda$ and $\mu$ respectively, and $m\in M^{\otimes |\lambda|}$ and $n\in N^{\otimes |\mu|}$ range over the $I$- and $J$-concentrated elements respectively. 

If $m\otimes n\in M^{\otimes|\lambda|}\otimes N^{\otimes|\mu|}$ is such that the individual tensorands of $m$ are linearly independent, we can apply \Cref{le.simples} repeatedly to $m\otimes n$ to get any other $m'\otimes n$. Similarly, we can change the $n$ tensorand at will. 

Applying this to elements $c_\lambda m\otimes c_\mu n$ as above, which we know are contained in the submodule of $M_\lambda\otimes N_\mu$, shows that indeed \define{all} of 
\begin{equation*}
  M_\lambda\otimes N_\mu=c_\lambda M^{\otimes|\lambda|}\otimes c_\mu N^{\otimes|\mu|}
\end{equation*}
is contained in this submodule. 
\end{proof_of_simples}

As a consequence of \cite[Lemma 3]{Chi14} this implies

\begin{corollary}\label{cor.4diag_aux}
  For any four Young diagrams $\lambda$, $\mu$, $\nu$, $\eta$, the $\fgl^M$-module
  \begin{equation*}
    (V^*/V_*)_\lambda\otimes\left(\ol{V}/V\right)_\mu\otimes V_{\nu,\eta} \in \bT^4_{\fgl^M}
  \end{equation*}
is simple.
\qedhere
\end{corollary}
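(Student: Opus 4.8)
The plan is to reduce the statement to \Cref{pr.simples} together with the standard mechanism — already underlying the simplicity of $V_{\lambda,\mu,\nu}$ in \cite{Chi14} — by which tensoring a simple module over a quotient Lie algebra with a simple module on which the kernel ideal acts densely and irreducibly yields a simple module. Concretely, set $I:=\fsl\left(V,V_*\right)=\ker\mathrm{p}\trianglelefteq\fg^M$ (an ideal, as it is used in the proof of \Cref{cor.dist_simples}) and
\begin{equation*}
  U:=(V^*/V_*)_\lambda\otimes\left(\ol V/V\right)_\mu,\qquad W:=V_{\nu,\eta}.
\end{equation*}
One first records that the finite-rank operators $V\otimes V_*\subseteq\fg^M$ send $V^*$ into $V_*$ and $\ol V=(V_*)^*$ into $V$, hence act trivially on $V^*/V_*$ and on $\ol V/V$; a fortiori $I$ annihilates $V^*/V_*$, $\ol V/V$, and therefore $U$, so that $U$ is genuinely a module over $\fg^M/I$, and it is simple by \Cref{pr.simples}. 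On the other side, $W=V_{\nu,\eta}$ is a tensor module on which $I$ acts densely and irreducibly with $\End_I(W)=\bK$: these are exactly the hypotheses verified when \Cref{le.hom_split} was applied in the proof of \Cref{cor.dist_simples}, and they are (for such $W$) the content of \cite[Lemma 3]{Chi14}.

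The core step is then the following density computation. Let $0\ne N\subseteq U\otimes W$ be a $\fg^M$-submodule. As an $I$-module $U\otimes W$ is a direct sum of copies of the cyclic simple module $W$, so — using $\End_I(W)=\bK$ and the isotypic decomposition of a semisimple $I$-module — every $I$-submodule of $U\otimes W$ has the form $U'\otimes W$ for a subspace $U'\subseteq U$; in particular $N=U'\otimes W$. To see that $U'$ is a submodule of $U$ over $\fg^M/I$, fix $u'\in U'$, $w\in W$, $g\in\fg^M$, and pick $k\in I$ with $kw=gw$ (density). Since $ku'=0$ one gets $g(u'\otimes w)=gu'\otimes w+u'\otimes gw=gu'\otimes w+k(u'\otimes w)$, and both $g(u'\otimes w)$ and $k(u'\otimes w)$ lie in $N=U'\otimes W$; hence $gu'\otimes w\in U'\otimes W$, which forces $gu'\in U'$. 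Thus $U'$ is a submodule of the simple module $U$, so $U'=0$ or $U'=U$, i.e. $N=0$ or $N=U\otimes W$. This yields the simplicity of $(V^*/V_*)_\lambda\otimes(\ol V/V)_\mu\otimes V_{\nu,\eta}$.

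The only real work is in justifying the two input properties of $W=V_{\nu,\eta}$ — density of the $\fsl(V,V_*)$-action and $\End_{\fsl(V,V_*)}(W)=\bK$ — but both are already available: density is inherited from the tensor-module structure exactly as in the earlier invocations of \Cref{le.hom_split}, and the endomorphism statement follows from the simplicity of $W$ over $\fsl(V,V_*)$ (via the equivalence $\bT_{\fgl^M}\simeq\bT_{\fsl(V,V_*)}$). Everything else is the formal isotypic/density argument above, which is precisely \cite[Lemma 3]{Chi14}; the one point to notice is that it applies verbatim with $U$ the two-diagram module $(V^*/V_*)_\lambda\otimes(\ol V/V)_\mu$ in place of a one-diagram module, the hard part — simplicity of that $U$ itself — having already been settled in \Cref{pr.simples}.
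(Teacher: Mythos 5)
Your proof is correct and takes essentially the same route as the paper, which deduces the corollary from \Cref{pr.simples} together with \cite[Lemma 3]{Chi14} --- precisely the mechanism you spell out, with the ideal $I=\fsl\left(V,V_*\right)$ annihilating $(V^*/V_*)_\lambda\otimes\left(\ol{V}/V\right)_\mu$ and acting densely and irreducibly on $V_{\nu,\eta}$ with scalar endomorphisms. The only difference is that you re-prove the cited lemma (via the isotypic/density argument) instead of quoting it, and you verify the hypotheses that the paper, like its earlier use in \Cref{cor.dist_simples}, treats as immediate.
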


Finally, as announced above, we show that we have found all of the simple objects and that $\bT^4_{\fgl^M}$ is a finite-length category.

\begin{proposition}\label{pr.4diag}
  All objects in $\bT^4_{\fgl^M}$ have finite length and the objects from \Cref{cor.4diag_aux} exhaust its simple objects. 
\end{proposition}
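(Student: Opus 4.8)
The plan is to bootstrap the statement from results already established: the simplicity results \Cref{pr.simples} and \Cref{cor.4diag_aux}, together with the known fact that $\bT^2_{\fgl^M}$ is a finite-length category with simple objects the $V_{\nu,\eta}$. Since by definition $\bT^4_{\fgl^M}$ consists of $\fg^M$-subquotients of finite direct sums of tensor products of copies of $V$, $V^*$ and $\ol V$, and since a subquotient of a finite-length module is again of finite length with Jordan--H\"older constituents among those of the ambient module, it suffices to prove the following: \emph{every tensor product $T=(V^*)^{\otimes m}\otimes \ol V^{\otimes k}\otimes V^{\otimes n}$ has finite length, and all of its composition factors are among the modules listed in \Cref{cor.4diag_aux}}. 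Once this is known, every object of $\bT^4_{\fgl^M}$ has finite length, and any simple object, being a subquotient of a finite direct sum of such $T$'s, is isomorphic to one of the listed simples, giving exhaustion.

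To analyze $T$, I would first resolve each tensorand using the two short exact sequences $0\to V_*\to V^*\to V^*/V_*\to 0$ and $0\to V\to \ol V\to \ol V/V\to 0$ (for the second, $V$ embeds into $\ol V=(V_*)^*$ via $\mathrm p$, and $\ol V/V$ is simple by \Cref{pr.simples} with $\lambda=\emptyset$). Because $\otimes_\bK$ is exact, these equip $T$ with a finite filtration whose successive quotients, after reordering tensorands (harmless, as the symmetry of the tensor category is simply the flip of factors, an $\fg^M$-isomorphism), are of the form
\begin{equation*}
  (V^*/V_*)^{\otimes a}\otimes (\ol V/V)^{\otimes b}\otimes V^{\otimes c}\otimes (V_*)^{\otimes d}.
\end{equation*}
Now $V^{\otimes c}\otimes (V_*)^{\otimes d}$ lies in $\bT^2_{\fgl^M}$, which is a finite-length category with simple objects the $V_{\nu,\eta}$ (see \cite{DPS,PS2}); tensoring a finite filtration of it with the fixed vector space $(V^*/V_*)^{\otimes a}\otimes (\ol V/V)^{\otimes b}$ (an exact functor) reduces the problem to showing that $(V^*/V_*)^{\otimes a}\otimes (\ol V/V)^{\otimes b}\otimes V_{\nu,\eta}$ has finite length with constituents among the claimed simples. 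Finally, by the usual Schur--Weyl decomposition -- valid for any vector space, as $\bK[S_a]$ is semisimple and its action commutes with that of $\fg^M$ -- one has $(V^*/V_*)^{\otimes a}\cong\bigoplus_{|\lambda|=a}(V^*/V_*)_\lambda^{\oplus \dim S^\lambda}$ and $(\ol V/V)^{\otimes b}\cong\bigoplus_{|\mu|=b}(\ol V/V)_\mu^{\oplus \dim S^\mu}$, with each $(V^*/V_*)_\lambda$ and $(\ol V/V)_\mu$ simple by \cite[Proposition 1]{Chi14}. Hence
\begin{equation*}
  (V^*/V_*)^{\otimes a}\otimes (\ol V/V)^{\otimes b}\otimes V_{\nu,\eta}
\end{equation*}
is a \emph{finite direct sum} of the modules $(V^*/V_*)_\lambda\otimes(\ol V/V)_\mu\otimes V_{\nu,\eta}$, each of which is simple by \Cref{cor.4diag_aux}. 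This yields finite length with the required composition factors and completes the argument.

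I do not expect a genuine obstacle: all the substance resides in \Cref{pr.simples}, \Cref{cor.4diag_aux}, and the structure theory of $\bT^2_{\fgl^M}$. The only points needing a little care are (i) the bookkeeping of the filtration of $T$ by tensor powers of the ``atoms'' $V,V_*,V^*/V_*,\ol V/V$, which works solely because $\otimes_\bK$ is exact, and (ii) the semisimplicity of the tensor powers $(V^*/V_*)^{\otimes a}$ and $(\ol V/V)^{\otimes b}$, which is the standard Schur--Weyl statement once each Schur-functor image is known to be simple. If one further wished to record irredundancy of the list -- that $(V^*/V_*)_\lambda\otimes(\ol V/V)_\mu\otimes V_{\nu,\eta}$ are pairwise non-isomorphic for distinct quadruples -- one could argue as for \Cref{cor.dist_simples}: restrict to $\fsl(V,V_*)$ to pin down $(\nu,\eta)$, then use a $\Hom$-splitting lemma in the spirit of \Cref{le.hom_split} together with Schur--Weyl rigidity of the tensor powers of $V^*/V_*$ and $\ol V/V$ to recover $\lambda$ and $\mu$; but this is not required for the statement as phrased.
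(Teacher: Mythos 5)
Your proposal is correct and follows essentially the same route as the paper, whose proof simply records as an ``observation'' that all tensor products of copies of $V$, $V_*$, $V^*$ and $\ol V$ admit finite filtrations with subquotients as in \Cref{cor.4diag_aux} and then invokes that corollary. Your argument merely makes this observation explicit (via the two short exact sequences, exactness of $\otimes_\bK$, the known structure of $\bT^2_{\fgl^M}$, and Schur--Weyl decomposition of the tensor powers of $V^*/V_*$ and $\ol V/V$), which is exactly the intended justification.
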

\begin{proof}
  This is immediate from \Cref{cor.4diag_aux} and the observation that all tensor products of copies of $V$, $V_*$, $V^*$ and $\ol{V}$ admit finite filtrations whose quotients are of the form described in \Cref{cor.4diag_aux}.  
\end{proof}


\bibliographystyle{plain}
\addcontentsline{toc}{section}{References}

\def\cftil#1{\ifmmode\setbox7\hbox{$\accent"5E#1$}\else
  \setbox7\hbox{\accent"5E#1}\penalty 10000\relax\fi\raise 1\ht7
  \hbox{\lower1.15ex\hbox to 1\wd7{\hss\accent"7E\hss}}\penalty 10000
  \hskip-1\wd7\penalty 10000\box7}

\Addresses

\end{document}